\begin{document}
\title{Diabolical entropy}
\date{}
\author{Neil Dobbs\thanks{N.D.\ was supported by the ERC Bridges grant} \and  Nicolae Mihalache\thanks{N.M.\ was supported by the ERC AG COMPAS grant, CNRS semester and ANR LAMBDA}}

\maketitle

\begin{flushright}
\textit{In memory of Tan Lei. \quad}
\medskip
\medskip
\end{flushright}

\begin{abstract}
    Milnor and Thurston's famous paper proved monotonicity of the topological entropy for the real quadratic family. Guckenheimer showed that it is Hölder continuous. We obtain a precise formula for the Hölder exponent at almost every quadratic parameter. 
    
Furthermore, the entropy of most parameters is proven to be in a set of Hausdorff dimension smaller than one, while most values of the entropy arise from a set of parameters of dimension smaller than one.
\end{abstract}

\newtheorem{lem}{Lemma}[section]
\newtheorem{thm}{Theorem}
\newtheorem{thmbody}[lem]{Theorem}
\newtheorem{conj}{Conjecture}
\newtheorem{mcor}[thm]{Corollary}
\newtheorem{cor}[lem]{Corollary}
\newtheorem{prop}[lem]{Proposition}
\newtheorem{cons}[lem]{Construction}
\newtheorem{defi}[lem]{Definition}
\newtheorem{fact}[lem]{Fact}
\newtheorem{rem}[lem]{Remark}

\def\ss{\subset}
\def\sm{\setminus}

\def\Orb{{\mathrm{Orb}}}
\def\eps{\varepsilon}
\def\del{\Delta}
\def\De{\Delta}
\def\ga{\gamma}
\def\ka{\kappa}
\def\Ga{\Gamma}
\def\be{\beta}
\def\om{\omega}
\def\Om{\Omega}
\def\al{\alpha}
\def\lam{\lambda}
\def\la{{\lambda}}
\def\si{{\sigma}}
\def\ph{\varphi}
\def\dd{\partial}
\newcommand{\de}{{\bf \delta}}
\newcommand{\un}{\underline}

\def\im{\Im}
\def\re{\Re}

\def\Cal#1{{\cal#1}}
\def\C{\mathbb C}
\def\N{\mathbb N}
\def\CC{\hat{\mathbb C}}
\def\D{\mathbb D}
\def\R{\mathbb R}

\def\sgn{{\mathrm{sgn}}}
\def\hd{{\mathrm{dim}_\mathrm{H}}}

\def\ip{{\ensuremath{\left[-2,\frac14 \right]}}}
\def\bip{{\ensuremath{\left[-2,\frac14 \right]}}}
\def\iv{{\ensuremath{[0,\ln2]}}}
\def\se{{\subseteq}}

\def\ol{\overline}
\def\ul{\underline}
\def\lla{{\underline\la}}
\def\ula{{\overline\la}}

\def\cC{{\Cal C}}
\def\M{{\Cal M}}
\def\H{{\Cal H}}
\def\J{{\Cal J}}
\def\F{{\Cal F}}
\def\JWR{{\Cal W}}
\def\E{{\Cal E}}
\def\O{{\Cal O}}
\def\V{{\Cal V}}
\def\SS{{\Cal S}}

\def\Epc{{\E}}

\def\ln{\log}

\def\acip{\mu_\mathrm{acip}}
\def\mme{\mu_\mathrm{max}}

\def\htop{h_\mathrm{top}}
\def\Hol{\mathrm{H\ddot{o}l}}

\def\Dist{\mathrm{Dist}}
\def\dist{\mathrm{dist}}

\def\pa{{\partial}}

\def\ND{\underline{\textbf{Neil says:}} \textbf}
\def\NM{\underline{\textbf{Nicu says:}} \textbf}

\def\wrt{{with respect to}}

\newcommand\ui[2]{\ensuremath{\left(#1, #2\right)}}

\newcommand\rthm[1]{{Theorem~\ref{thm#1}}}
\newcommand\rpro[1]{{Proposition~\ref{prop#1}}}
\newcommand\rprop[1]{{Proposition~\ref{prop#1}}}
\newcommand\rlem[1]{{Lemma~\ref{lem#1}}}
\newcommand\rcor[1]{{Corollary~\ref{cor#1}}}
\newcommand\rdef[1]{{Definition~\ref{defi#1}}}
\newcommand\rfact[1]{{Fact~\ref{fact#1}}}
\newcommand\rsec[1]{{\S\ref{sect#1}}}
\newcommand\rrem[1]{{Remark~\ref{rem#1}}}

\def\wg{well-rooted\xspace}

\def\mc{\simeq_\cdot} 
\def\ac{\simeq_+}
\def\Xi{\varphi}

\newcommand\matop[2]{\genfrac{}{}{0pt}{}{#1}{#2}}

\section{Introduction}
This paper studies the regularity of the topological entropy of maps from the 
quadratic (or \emph{logistic}) family $f_a(x)=x^2+a$, $a\in\ip$. The fundaments concerning topological entropy for piecewise-monotone maps of the interval were developed by Misiurewicz and Szlenk \cite{MisSzl}.
They showed that the entropy $h(g)$ of a map $g$ is
    the exponential growth rate of the number of monotonic laps of iterates of $g$ and that, for smooth unimodal maps, the entropy varies continuously. 
    Writing $h(a)$ for $h(f_a)$, 
     Milnor and Thurston  \cite{MilThu}\footnote{Douady, Hubbard and Sullivan had proven that the number of periodic orbits of some fixed period is monotonically decreasing, which implies the monotonicity of entropy. This result was unpublished, a later version was published by Douady \cite{Dou}.}
     proved that $a \mapsto h(a)$ is a monotone function, a result recently generalised to the multimodal setting by Bruin and van Strien \cite{BruvS}. The function $h$ has range \iv, yet is locally constant on an open dense subset of $\ip$ and therefore quite irregular. 
     On the other hand, Guckenheimer \cite{Guc} proved that $a \mapsto h(a)$ is a H\"older continuous map. 
     Our principal result provides an exact formula for the H\"older exponent of $h$ at most parameters, given in terms of the value of $h$ itself and the Lyapunov exponent of the critical value. 

     \medskip

     In the presence of an attracting periodic orbit, the map is \emph{hyperbolic}, the non-wandering set is structurally stable and the entropy $h$ is locally constant. We denote the set of hyperbolic parameters in $\ip$ by $\H$. Hyperbolic parameters form an open, dense set \cite{GraSwi, Lyu}, so $h$ is locally constant on an open dense set. 
     The maximal set $\F$ on which $h$ is locally constant (or flat) strictly contains $\H$. Let us define $\V$ as the set of points $a$ at which $h$ is not locally constant at $a$ on either side of $a$. 
    By monotonicity and continuity of $h$, 
     $$ \V := \left\{ a \in \ip : \{a\} = h^{-1}(h(a)) \right\}.$$
     Non-renormalisable parameters with positive entropy are contained in $\V \cup \{-2\}$. 
As a by-product of Jakobson's theorem \cite{Jak}, $\V$ has positive measure.

The  \emph{lower Lyapunov exponent} $\lla(a)$ of $f_a(a)$ is defined by 
$$\lla(a):=\liminf_{n\to\infty}\frac{\ln|(f_a^n)'(a)|}n;$$
 the  \emph{upper Lyapunov exponent} $\ula(a)$ is defined with a $\limsup$ instead. 
 When $\lla(a)=\ula(a)$, we call the common value $\la(a)$ the \emph{(pointwise) Lyapunov exponent of the critical value}. 

\emph{Tsujii's weak regularity} condition \cite{Tsu}
\begin{equation} \label{equWR}
    \tag{WR}
    \lim_{\de \to 0^+} \liminf_{n \to \infty} \frac 1n \sum_{\matop{j=1}{\left|f_a^j(0)\right| \leq \de}}^{n}  \ln |f_a'(f_a^j(0))|=0
\end{equation}
will be rather important in this work. It says that the critical orbit may recur, but not too close too soon and not too often. Let
\begin{equation} \label{equJWRdef}
\JWR := \left\{a \in \ip : \la(a) >0 \mbox{ and } f_a \mbox{ verifies } \eqref{equWR} \right\}.
\end{equation}
We shall deduce (in \rprop{Quadwr}, based on work of Tsujii, Avila and Moreira and Lyubich) that  $\JWR \se \H^c$ has full measure in $\H^c$ and thus in $\V$.  



\begin{thm}
\label{thmHolder}
For every $a\in \V \cap \JWR$,  
\begin{equation}\label{equMagic}
    \lim_{t\to 0} \frac{\ln |h(a+t)-h(a)|}{\ln|t|} = \frac{h(a)}{\la(a)}.
\end{equation}
\end{thm}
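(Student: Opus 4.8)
The plan is to read $h(a)$ off the itinerary of the critical value via Milnor--Thurston kneading theory, to pin down how far that itinerary is perturbed when $a$ is moved by $t$ --- using that $a\in\JWR$ --- and then to translate the size of this combinatorial perturbation back into a change of entropy. Recall from \cite{MilThu} that, when $h(a)>0$, the number $R_a:=e^{-h(a)}$ is the smallest zero in $(0,1)$ of the kneading power series $D_a(x)=\sum_{n\ge0}\theta_n(a)\,x^n$, where $\theta_n(a)=\prod_{j=1}^{n}\eps_j(a)$ and $\eps_j(a)=\sgn\!\big(f_a^j(0)\big)$; since the zero-entropy parameters fill an interval, $a\in\V$ forces $h(a)>0$, so this applies, and the strict monotonicity encoded by $a\in\V$ will be needed not merely to know $h(a+t)\neq h(a)$ but to guarantee that perturbing the itinerary relocates $R_a$ by a definite, controlled amount. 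One reduces, by passing to a renormalisation, to the case where $f_a$ is non-renormalisable (possible since $\la(a)>0$ rules out infinitely many renormalisations).

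The technical core is a \emph{parameter-phase comparison}. Let $M=M(t)$ be the first index at which the itineraries of $a$ and $a+t$ differ. While the critical orbit of $a+t$ shadows that of $a$, Benedicks--Carleson / Tsujii / Avila--Moreira type estimates give $\big|\partial_t f_{a+t}^n(a+t)\big|\asymp\big|(f_a^n)'(a)\big|$, with implied constants governed by the depth and frequency of the close returns of the critical orbit of $a$ --- which is exactly what \eqref{equWR} keeps under control (see \cite{Tsu}). Hence the accumulated displacement $\big|f_{a+t}^n(a+t)-f_a^n(a)\big|\asymp|t|\,\big|(f_a^n)'(a)\big|$ first attains the scale $|f_a^{M}(0)|$ --- the only mechanism by which a symbol can flip --- essentially when $n=M$, so $|t|\asymp|f_a^{M}(0)|\,\big|(f_a^{M})'(a)\big|^{-1}$. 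Since $\la(a)$ exists as a genuine limit, $\tfrac1n\sum_{j\le n}\ln|f_a^j(0)|\to\la(a)-\ln2$, which forces $\ln|f_a^n(0)|=o(n)$; combined with $\tfrac1n\ln\big|(f_a^n)'(a)\big|\to\la(a)$ this yields $\ln|t|=-M\la(a)+o(M)$, i.e.\ $M(t)=\la(a)^{-1}\ln(1/|t|)(1+o(1))$ as $t\to0$.

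It remains to convert the datum $M=M(t)$ into an entropy increment. Agreement of the itineraries up to $M-1$ gives $D_{a+t}-D_a=O\!\big(x^{M}/(1-x)\big)$ on $[0,1)$, so $\big|D_{a+t}(R_a)\big|=O(R_a^{M})$; together with the non-degeneracy of the zero $R_a$ furnished by $a\in\V$ this gives $|R_{a+t}-R_a|=O(R_a^{M})$, hence $|h(a+t)-h(a)|=O\!\big(e^{-h(a)M(t)}\big)$ --- which already yields ``$\liminf\ge$'' in \eqref{equMagic}. For the matching lower bound one exploits the structure of the perturbed itinerary: just after the disagreement the orbit of $a+t$ lies within $O(|f_a^{M}(0)|)$ of the critical point, hence a step later within $O(|f_a^{M}(0)|^2)$ of the critical value, so $\eps_\bullet(a+t)$ re-tracks the kneading sequence of $a$ for a binding period of length $\asymp\la(a)^{-1}\big|\ln|f_a^{M}(0)|\big|$; iterating, the difference sequence $\theta_n(a+t)-\theta_n(a)$ is supported on geometrically well-separated generations, so its generating function evaluated at $R_a$ is $\asymp R_a^{M}$ up to a factor $1+o(1)$ (no exponential cancellation), and via $a\in\V$ one gets $|h(a+t)-h(a)|\gtrsim e^{-h(a)M(t)(1+o(1))}$. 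Thus $|h(a+t)-h(a)|=e^{-h(a)M(t)(1+o(1))}=|t|^{h(a)/\la(a)+o(1)}$, and dividing logarithms,
\[
\frac{\ln|h(a+t)-h(a)|}{\ln|t|}=\frac{h(a)}{\la(a)}\cdot\frac{\la(a)\,M(t)}{\ln(1/|t|)}\,(1+o(1))\ \longrightarrow\ \frac{h(a)}{\la(a)}.
\]

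The principal obstacle is uniformity in the parameter-phase comparison and in its dual on the combinatorial side. A close return of the critical orbit of $a$ could a priori either amplify the perturbation and terminate the agreement of itineraries too soon, or leave it intact too long; bounding the aggregate effect of such returns is precisely the purpose of \eqref{equWR}, and making $\big|\partial_t f_{a+t}^n(a+t)\big|\asymp\big|(f_a^n)'(a)\big|$ hold uniformly over the relevant range of $(t,n)$ is the heart of the argument. On the entropy side, the delicate point is the lower bound $|h(a+t)-h(a)|\gtrsim e^{-h(a)M(t)(1+o(1))}$: since $R_a=e^{-h(a)}\ge\tfrac12$, the leading term of $D_{a+t}(R_a)-D_a(R_a)$ need not dominate its tail, so one cannot reason coefficient by coefficient, and ruling out near-cancellation uniformly in $M$ is exactly where $a\in\V$ must be used with full quantitative strength, via the binding-period self-similarity of the kneading sequence described above.
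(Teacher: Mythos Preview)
Your approach differs substantially from the paper's, and the lower bound carries a genuine gap.

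The paper never touches the kneading determinant and never tries to control $h(a+t)$ for every small $t$. Instead, Tsujii's machinery produces a \emph{subsequence} of good times $(k_n)$ with $k_{n+1}/k_n\le 1+\delta$ and shrinking parameter intervals $\omega_n\ni 0$ on which $\xi_{k_n}(\cdot):=f_{\,\cdot}^{\,k_n+1}(0)$ is a bounded-distortion diffeomorphism onto an interval of definite length, with $|\xi'_{k_n}|\asymp e^{k_n\la(a)}$. In $\xi_{k_n}(\omega_n)$ one then locates a ``safe'' preimage of $0$ (pulled back via the semi-conjugacy to the tent map $T_{b_0}$, $b_0=e^{h(a)}$), picking out a specific $t_n\in\omega_n$ whose critical orbit hits this topologically defined point. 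The same topological point determines a tent parameter $b_n$, and the Brucks--Misiurewicz estimate $|\Xi_n'(b)|\asymp b_0^{\,n}$ gives $|b_n-b_0|\asymp b_0^{-k_n}$ directly. Thus $|h(t_n)-h(0)|\asymp e^{-k_nh(a)}$ while $|t_n|\asymp e^{-k_n\la(a)}$, establishing~\eqref{equMagic} along $(t_n)$; \emph{monotonicity} of $h$ together with $\log t_{n+1}/\log t_n\to 1$ then yields the full limit. Monotonicity is an explicit hypothesis in the general unimodal version (\rthm{HolderUnimodal}), not a cosmetic one.

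Your argument runs into trouble at two points. First, Tsujii's parameter--phase transfer gives bounded distortion and $|\xi'_n|\asymp|(f_a^n)'(a)|$ only along the good subsequence $(k_n)$, not for every $n$; your formulation needs it at $n=M(t)$ for \emph{all} $t$, which is strictly more than weak regularity delivers. Second, and this is the essential gap, the lower bound $|h(a+t)-h(a)|\gtrsim e^{-h(a)M(t)(1+o(1))}$. The ``no exponential cancellation'' claim for $\sum_n(\theta_n(a+t)-\theta_n(a))R_a^{\,n}$ is not established: after the first disagreement and the binding period, the orbit of $f_{a+t}$ has no reason to resume shadowing the kneading orbit of $f_a$; the tail sign pattern is essentially arbitrary, and since $R_a\ge\tfrac12$ the tail can cancel the leading block. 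The binding-period self-similarity you invoke constrains only a short stretch immediately after each disagreement, not the full tail, so it does not yield the uniform lower bound you need. The paper's device --- match a topological marker in both the unimodal and tent families, and read off $|b_n-b_0|$ from the tent-side derivative --- is precisely what replaces this missing step; then monotonicity does the interpolation your approach tries to do by hand.
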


Consequently the derivative satisfies $h'(a) =0$, for $a \in \V \cap \JWR$, 
if ${h(a)}/{\la(a)} >1$; 
if less than $1$, $|h'(a)| = \infty$.

Collet-Eckmann \cite{ColEck} parameters are those for which $\lla(a) >0$. 
Benedicks and Carleson \cite{BenCar}  proved that they form a positive measure set. Each parameter in $\JWR$ is Collet-Eckmann. 
The set $\ip \setminus (\V \cup \F)$ is necessarily countable; to one side of each parameter, the entropy is locally constant; each parameter in the set is either parabolic or preperiodic. For 
    preperiodic parameters,~\eqref{equMagic}  still holds 
    on the non-locally-constant side,
    see \rthm{HolderUnimodal}. 
For parabolic parameters, we obtain infinite flatness, see \rthm{Parab}.


We say that a real map $g:I \to \R$ is $(C,\be)$-H\"older continuous at $x \in I$ if  $C,\be>0$ and if, for all $y \in I$,
\begin{equation}\label{equHoldef}
    |g(y)-g(x)| \leq C |y - x|^\be.
\end{equation}
If the constants are not specified, we will say the map is $\be$-H\"older continuous  or H\"older continuous at $x$. We omit ``at $x$'' if~\eqref{equHoldef} holds at every $x \in I$. 
Let \label{pageH}
$$\Hol(g,x):=\sup\{\be > 0\ :\ g \text{ is }\be\text{-Hölder continuous at }x \}.$$
A weaker formulation of \rthm{Holder} would be that $\Hol(h,a) = \frac{h(a)}{\la(a)}$ for  every $a \in \V \cap \JWR$. 

From the proof, one can extrapolate that if $\la_n(a) = \frac1n \log |Df^n_a(a)|$ oscillates slowly but with large range as $n$ grows, then the limit~\eqref{equMagic} does not exist.
Thus $\la(a)$ in general does not exist, so one cannot do much better than \rthm{Holder} (which we show for all parameters $a \in \F^c$ subject to relative full-measure hypotheses: existence of $\lambda(a)$ and Tsujii's weak regularity condition). 

Isola and Politi \cite{IsoPol} performed numerical experiments and some analysis on the regularity of $h$. They suggested that its local H\"older exponent at some parameter $a$, as a function of a number $\tau(a)$ related to the kneading determinant,  is its value $h(a)$. 
To our knowledge, \rthm{Holder} is the first non-experimental result concerning the regularity of $a \mapsto h(a)$ since the works of Guckenheimer and Milnor and Thurston.

\subsection{Uniform estimates and dimension}
\label{sectIUE}
The orbit of the critical value determines to a large extent the ergodic properties of the map. Conversely, the critical value is often typical with respect to a measure, so knowing the ergodic properties, one can sometimes determine properties of the post-critical orbit. These will permit us to deduce dimension estimates in Theorem~\ref{thm:main}.

To avoid confusion with the pointwise Lyapunov exponent of the critical value, given a map $f$, we shall denote the Lyapunov exponent of an $f$-invariant probability  measure $\mu$ by
$$
\chi(\mu) = \int \log |f'|\, d\mu.$$
There are two rather special types of such measures,
\begin{description}
    \item[$\mme$ ] : the measure of maximal entropy;
    \item[$\acip$] \hspace{2mm}: an absolutely-continuous invariant probability  (\emph{acip}).
        \end{description}
        The Feigenbaum (-Coullet-Tresser) parameter $a_F$ is the leftmost parameter $a$ satisfying $h(a)=0$.
%
For a quadratic map $f_a$, $a \in [-2,a_F)$, the measure of maximal entropy always exists, is unique and its metric entropy $h(\mme)$ equals $h(a)$ \cite{Rai} 
            (this function overloading of $h$, so it can take as an argument either a parameter or an invariant measure, ought not cause confusion). There may or may not be an acip, but if there is, it is unique and \cite{BloLyu, Led} 
            $$\chi(\acip) = h(\acip) >0.$$  
    We shall use $\mme^a, \acip^a$ to indicate dependence on $f_a$. 

    Let us define 
    \begin{eqnarray}
        \hat X &:=&\{a\in \JWR  : \la(a) = \chi(\acip^a)\},\\ \label{equXhatdef}
        X &:=& \{a\in \V \cap \JWR  : \la(a) = \chi(\acip^a)\},\\ \label{equXdef}
        Y &:=& \{a\in \V \cap \JWR  : \la(a) = \chi(\mme^a)\}. \label{equYdef}
    \end{eqnarray}

    $\hat X$ will have full measure in $\H^c$  and
    $X = \hat X \cap \V$ will have full measure in $\F^c$ (\rprop{Quadwr}).
 It follows from \cite{San, Bru} (see Proposition~\ref{prop:tentwr}) that $h(Y)$ has full measure in $[0, \log 2]$. 

 \begin{defi}
\label{defUni}
We say that a continuous map $g:I \to I$ defined on a compact interval $I$ is \emph{unimodal} if $g(\pa I) \ss \pa I$ and $g$ has exactly one turning point $c$ with $c \in I \sm \pa I$. We say $g$ is a \emph{smooth unimodal map} if, moreover, $g$ is continuously differentiable and $c$ is the unique (\emph{critical}) point satisfying $g'(c)=0$. The critical point is \emph{non-degenerate} if $g''(c) \ne 0$. 
\end{defi}
\begin{defi}
    A map $g : I \to I$  is \emph{non-degenerate S-unimodal} if it is a $\cC^2$ smooth unimodal map with non-degenerate critical point $c$,  $|g'|^{-1/2}$ is convex on each component of $I\setminus \{c\}$ and $|g'| >1$ on $\partial I$. 
\end{defi}

The convexity condition is equivalent \cite{NowSan}, for $\cC^3$ maps, to having non-positive Schwarzian derivative, while strict convexity corresponds to negative Schwarzian derivative. Quadratic maps have negative Schwarzian derivative.

Except for some special cases \cite{DobMME}, if $g$ is  S-unimodal with an acip $\acip$ of positive entropy, then $h(\acip) < h(\mme) = \htop(g)$.  
In the context of the quadratic family, we obtain  the following result of independent interest (whose likelihood was suggested by Bruin \cite{Bru}), improving on  the work of the first author \cite{DobMME}. It depends on a recent result of Inou \cite{Inou} on extensions of conjugacies of polynomial-like maps and is proven in~\S\ref{S:Vmme}. 
    \begin{thm} \label{thm:Zdun}
        If $f_a : x \mapsto x^2 + a$, 
          $\mme^a = \acip^a$ if and only if $a = -2$. 
    \end{thm}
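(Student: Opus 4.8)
The plan is to prove the two implications separately, the easy direction being that $f_{-2}$ (the Chebyshev map, conjugate to the full tent map) has $\mme = \acip$: here the map is affinely semiconjugate to the full shift, the measure of maximal entropy pushes forward to Lebesgue on $[-2,2]$, and a direct computation (or the classical fact that $x \mapsto 2 - x^2$ preserves the arcsine distribution) shows this is an acip, so $\mme^{-2} = \acip^{-2}$ and $\chi(\acip^{-2}) = \log 2 = h(-2)$. The substance is the converse: assuming $\mme^a = \acip^a$ for some $a \in [-2, a_F)$, show $a = -2$. First I would record the immediate consequences of the hypothesis: combining $\chi(\mme^a) = \chi(\acip^a)$ with the known identities $h(\mme^a) = h(a) = \htop(f_a)$ and $\chi(\acip^a) = h(\acip^a)$, one gets $h(a) = \chi(\mme^a)$, so $a$ lies in the set $Y$ from~\eqref{equYdef} (after checking $a \in \V \cap \JWR$, which follows since a parameter with an acip whose exponent equals $\log$-of-the-degree-growth is certainly non-hyperbolic and non-flat, hence in $\F^c$, and one uses \rprop{Quadwr}-type input to place it in $\JWR$). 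The real work is to rule out $a \in (-2, a_F)$.

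The core of the argument should be a rigidity statement: a quadratic map other than $f_{-2}$ cannot have its measure of maximal entropy be absolutely continuous. I would approach this via the combinatorial/geometric structure of the post-critical set. If $\mme^a = \acip^a =: \mu$, then $\mu$ is simultaneously the pullback under the monotone semiconjugacy of Lebesgue measure on the kneading-theoretic model (since $\mme$ always is), and an acip for $f_a$. For $a \neq -2$ the Julia set / the dynamical interval $[f_a^2(0), f_a(0)]$ is a proper subinterval on which $f_a$ is not surjective onto a full ``tent-like'' picture unless there is renormalisation structure, and the semiconjugacy to the tent map of the corresponding entropy collapses intervals (the complementary intervals of the support). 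Absolute continuity of $\mu$ forces $\mu$ to have no atoms and full support equal to an interval, which constrains these collapsed intervals to be null; pushing this through the semiconjugacy and using that $\acip$ has a density bounded below on its support (standard for S-unimodal maps with the relevant hyperbolicity, e.g. Collet--Eckmann, which holds on $\JWR$), one should derive that the semiconjugacy is actually a conjugacy, i.e. $f_a$ is conjugate (topologically, then by the rigidity of the situation quasisymmetrically) to $f_{-2}$. This is where Inou's result \cite{Inou} on extending conjugacies of polynomial-like maps enters: a topological conjugacy on the real trace, together with the polynomial-like structure, extends to a conjugacy of polynomial-like maps, and then by rigidity of the straightening (or by the fact that $-2$ is the unique parameter with this combinatorics and no room for a quasiconformal deformation) we conclude $a = -2$.

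I expect the main obstacle to be the passage from ``$\mme^a$ is absolutely continuous'' to ``$f_a$ is conjugate, not merely semiconjugate, to the full Chebyshev map'' — that is, showing the monotone semiconjugacy $\pi$ from $f_a$ to the tent map of slope $e^{h(a)}$ collapses only a null set and hence is a homeomorphism. One has to exclude the possibility that $f_a$ is renormalisable in a way that makes $\mme$ supported on a Cantor-like or smaller-interval attractor on which it happens to be absolutely continuous with respect to one-dimensional Lebesgue measure on that set; here the restriction $a \in [-2, a_F)$ (so $h(a) > 0$) and the identity $\chi(\mu) = h(a)$, which equates the Lyapunov exponent to the full entropy with no loss, should be exactly what forbids a proper renormalisation (a renormalisation would strictly decrease one of the two quantities relative to the other, contradicting \cite{DobMME}-type inequalities $h(\acip) < h(\mme)$ that hold off the special case). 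Making that dichotomy airtight — essentially, that $a = -2$ is the \emph{only} place the inequality $h(\acip^a) < h(\mme^a)$ degenerates to equality — is the crux, and it is precisely where the improvement over \cite{DobMME} and the new input from \cite{Inou} are needed.
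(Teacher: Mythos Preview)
Your outline misses the two key ingredients of the paper's proof and misapplies the third. The paper does \emph{not} try to show directly that the semiconjugacy to the tent map is injective, nor does it need to place $a$ in $\JWR$ or $Y$ (your attempt to do so is unjustified anyway: there is no reason the hypothesis $\mme^a=\acip^a$ forces \eqref{equWR} or existence of $\la(a)$). Instead, the first step is to quote \cite[Theorem~2]{DobMME} (Fact~\ref{factDobMME}): for any S-unimodal map with positive entropy, $\mme$ is absolutely continuous if and only if the map is \emph{pre-Chebyshev}, i.e.\ after finitely many Feigenbaum renormalisations one obtains a map smoothly conjugate on its restrictive interval to $x\mapsto 1-2|x|$. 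This already handles the passage you flagged as the main obstacle; you should not be reproving it.

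The remaining task is to show that no quadratic map other than $f_{-2}$ is pre-Chebyshev, and here your sketch is on the wrong track. The paper's argument is complex-analytic: the final renormalisation $f^n:J\to J$ extends to a quadratic-like map whose Julia set is the interval $J$, so harmonic measure on $J$ is equivalent to Lebesgue and hence to $\mme$. The Zdunik / Popovici--Volberg dichotomy \cite{Zdu,PopVol} then forces $f^n$ to be \emph{conformally} (not merely topologically) equivalent to $z\mapsto z^2-2$ on a neighbourhood of $J$. Only now does Inou \cite{Inou} enter, and not as an extension-of-conjugacy result in the way you describe: it is used to compare global degrees, giving $\deg(f^n)=\deg(z\mapsto z^2-2)=2$, hence $n=1$ and $f=f_{-2}$. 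Your proposed route via extending a real topological conjugacy does not yield conformal equivalence and would not control the renormalisation period.
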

    It was known already to von Neumann and Ulam \cite{vNU} that $\acip^{-2}$ is the pullback of Lebesgue measure by the smooth conjugacy to the tent map $x \mapsto 1-2|x|$. It follows that $h(\acip^{-2}) = \log 2$ and $\acip^{-2} = \mme^{-2}$. 
    In~\S\ref{S:Vmme} we show that absolute continuity of $\mme^a$ implies $a = -2$.

    For $a \ne -2$,  
    $h(\acip^a) < h(a)$ (when $\acip^a$ exists). 
    For $a =-2$, one can verify that $\la(a) = 2\chi(\mme^a) = 2\chi(\acip^a).$ Thus $-2 \notin  \hat X  \cup Y$. 
    On $\hat X$, we deduce 
    \begin{equation}\label{eqn:chihrel}
        \la(a) = \chi(\acip^a) = h(\acip^a) < h(\mme^a) = h(a).
        \end{equation}
        For $a \in \H$, $\la(a) < h(a)$. 
        On $\F$, $h' = 0$ by definition. 
        Applying \rthm{Holder} on $X$, a full measure subset of $\F^c$,  we obtain the following. 
\begin{mcor}
\label{corDS}
For almost every $a \in \ip$, $\la(a) < h(a)$ and $h'(a)=0$.
\end{mcor}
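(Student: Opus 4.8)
The plan is to treat the two conclusions separately, splitting $\ip$ in two different ways and then intersecting the resulting co-null sets; all the substantive input is already available, so what remains is measure-theoretic bookkeeping. For the flatness of $h$, write $\ip = \F \cup \F^c$. On $\F$ the function $h$ is locally constant, hence $h' \equiv 0$, by the definition of $\F$. On $\F^c$, recall from \rprop{Quadwr} that $X = \hat X \cap \V$ has full measure there; for $a \in X$ we have $a \in \V \cap \JWR$, and inequality~\eqref{eqn:chihrel} (valid on $\hat X$, which contains $X$) gives $\la(a) = h(\acip^a) < h(\mme^a) = h(a)$, so $h(a)/\la(a) > 1$, whereupon \rthm{Holder} yields $h'(a) = 0$. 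Thus $h' = 0$ almost everywhere on $\F$ and almost everywhere on $\F^c$, hence almost everywhere on $\ip$.

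For the inequality $\la < h$, write $\ip = \H \cup \H^c$. For $a \in \H$ the critical value is attracted to an attracting cycle, so $\la(a) < 0 \le h(a)$ (the fact already recorded in the discussion above). On $\H^c$, \rprop{Quadwr} gives that $\hat X$ has full measure there, and on $\hat X$ the outer inequality in~\eqref{eqn:chihrel} reads precisely $\la(a) < h(a)$. Hence $\la(a) < h(a)$ on $\H \cup \hat X$, which exhausts $\ip$ up to a null set. Intersecting this co-null set with the one produced for flatness gives a co-null set of parameters at which simultaneously $\la(a) < h(a)$ and $h'(a) = 0$, as claimed.

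I do not foresee a genuine obstacle here: the weight is carried entirely by \rprop{Quadwr} (which rests on the work of Tsujii, Avila--Moreira and Lyubich on weak regularity holding on a full-measure set) and by \rthm{Holder}, with \rthm{Zdun} and~\eqref{eqn:chihrel} supplying the \emph{strict} inequality $h(a)/\la(a) > 1$ that is needed to conclude $h'(a) = 0$ rather than merely $\Hol(h,a) = h(a)/\la(a)$. The one point requiring a little care is that one must \emph{not} attempt to cover the flat non-hyperbolic parameters $\F \setminus \H$ (which a priori might carry positive measure) using $\H$; instead, the inequality $\la < h$ on those parameters is extracted from $\hat X$, which is co-null in all of $\H^c$ and hence in $\F \setminus \H$.
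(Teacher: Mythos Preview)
Your proof is correct and follows essentially the same route as the paper: split into $\F \cup \F^c$ for the flatness claim (using the definition of $\F$ on one side and \rthm{Holder} together with~\eqref{eqn:chihrel} on $X$ on the other), and split into $\H \cup \H^c$ for the inequality $\la(a) < h(a)$ (using that the critical orbit is attracted to a sink on $\H$, and~\eqref{eqn:chihrel} on the co-null set $\hat X \subset \H^c$). Your final remark about handling $\F \setminus \H$ via $\hat X$ rather than via $\H$ is exactly the point the paper makes implicitly by introducing $\hat X$ alongside $X$.
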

Thus  $h$ is absolutely singular. A \emph{devil's staircase} function can be defined as a non-constant, continuous, monotone function which has derivative $0$ almost everywhere and is locally constant on an open dense set. 
In particular, $h$ is a devil's staircase function. 
We shall improve on this, obtaining uniform dimension estimates away from $\{-2,a_F\}$. 

Recent tools from \cite{DobTod} allow us to prove uniformity in~\eqref{eqn:chihrel} and to prove continuous dependence of $\chi(\mme^a)$ on $a$ (Theorem~\ref{thm:mmecns}). We obtain the following,
where $\hd(A)$ denotes the Hausdorff dimension of a set $A$.
\begin{thm} \label{thm:main}
\label{thmImage}
For every $\eps>0$,
\begin{eqnarray*}
    \hd\left(h\left(X \cap [-2+ \eps, a_F - \eps] \right)\right) &<& 1,\\
    \hd\left(Y \cap [-2+ \eps, a_F - \eps] \right) &<& 1.
\end{eqnarray*}
\end{thm}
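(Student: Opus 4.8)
The plan is to turn the pointwise identity~\eqref{equMagic} of \rthm{Holder} into \emph{uniform} H\"older estimates on suitable sets and then apply the standard principle that a $\beta$-H\"older map sends a set of Hausdorff dimension $\le 1$ to a set of dimension $\le 1/\beta$. On $X$ the exponent $h(a)/\la(a)$ exceeds $1$, so $h$ is locally contracting and its image is small; on $Y$ the exponent is $<1$, so $h$ is locally expanding, and since $h(Y)\ss[0,\log2]$ has dimension $\le1$, the set $Y$ itself must be small. The two uniformity inputs are the uniform version of~\eqref{eqn:chihrel} on compact subsets of $\ui{-2}{a_F}$ (from the tools of~\cite{DobTod}) and the continuity of $a\mapsto\chi(\mme^a)$, Theorem~\ref{thm:mmecns}, together with Theorem~\ref{thm:Zdun}. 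Fix $\eps>0$ and put $K:=[-2+\eps,\,a_F-\eps]$, a compact interval disjoint from $\{-2,a_F\}$ on which $h$ takes values in $[h_{\min},\log2]$ with $h_{\min}:=h(a_F-\eps)>0$. I claim there are constants $\be_0>1>\be_1$ with $h(a)/\la(a)\ge\be_0$ for $a\in X\cap K$ and $h(a)/\la(a)\le\be_1$ for $a\in Y\cap K$.

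For the first bound, $X\se\hat X$, so~\eqref{eqn:chihrel} gives $h(a)/\la(a)=h(\mme^a)/h(\acip^a)$, and the uniform form of~\eqref{eqn:chihrel} on $K$ --- available because $-2\notin K$ (where the two entropies coincide) and $a_F\notin K$ --- furnishes $\be_0>1$ with $h(\mme^a)\ge\be_0\,h(\acip^a)$ on $X\cap K$. For the second, the Ruelle inequality gives $h_\mu\le\max(0,\chi(\mu))$ for every invariant measure $\mu$, so $\chi(\mme^a)\ge h(\mme^a)=h(a)>0$ on $K$; if equality held then, being in the equality case of Ruelle's inequality, $\mme^a$ would be absolutely continuous (Ledrappier, \cite{Led}), i.e.\ $\mme^a=\acip^a$, forcing $a=-2\notin K$ by Theorem~\ref{thm:Zdun}. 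Hence $\chi(\mme^a)>h(a)$ on $K$, and by Theorem~\ref{thm:mmecns} and continuity of $h$ on the compact set $K$ there is $\delta_1>0$ with $\chi(\mme^a)\ge h(a)+\delta_1$; as $\la(a)=\chi(\mme^a)$ on $Y$ and $h\le\log2$, we take $\be_1:=\log2/(\log2+\delta_1)<1$.

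Now fix $\eta>0$ with $\beta:=\be_0-\eta>1$ and $\alpha:=\be_1+\eta<1$. By~\eqref{equMagic}, every $a\in X\cap K$ has a scale $t_0(a)>0$ below which $|h(a+t)-h(a)|\le|t|^{\beta}$, and every $a\in Y\cap K$ has a scale below which $|h(a+t)-h(a)|\ge|t|^{\alpha}$ (for $t\ne0$ this difference is nonzero since $h$ is injective on $\V$, which contains $Y$, directly from the definition of $\V$). Split $X\cap K=\bigcup_{m\ge1}X_m$ and $Y\cap K=\bigcup_{m\ge1}Y_m$ according to whether $t_0(a)\ge1/m$, and cover $K$ by finitely many intervals $J_1,\dots,J_{N(m)}$ of length $<1/m$. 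On each $X_m\cap J_i$ the map $h$ is $\beta$-H\"older, so $\hd\bigl(h(X_m\cap J_i)\bigr)\le\hd(X_m\cap J_i)/\beta\le1/\beta$; taking a finite union over $i$ then a countable union over $m$ yields $\hd\bigl(h(X\cap K)\bigr)\le1/\beta<1$. On each $Y_m\cap J_i$ we have $|h(a)-h(a')|\ge|a-a'|^{\alpha}$ and $h$ is injective, so the inverse $h(Y_m\cap J_i)\to Y_m\cap J_i$ is $(1/\alpha)$-H\"older, giving $\hd(Y_m\cap J_i)\le\alpha\,\hd\bigl(h(Y_m\cap J_i)\bigr)\le\alpha$; finite-then-countable unions give $\hd(Y\cap K)\le\alpha<1$.

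The principal difficulty is the uniform form of~\eqref{eqn:chihrel}, especially as $a\uparrow a_F$: there $h(\mme^a)$ and $h(\acip^a)$ both tend to $0$, so keeping their ratio bounded away from $1$ is not a compactness statement but rests on the quantitative estimates of~\cite{DobTod} (the same input behind Theorem~\ref{thm:mmecns}); near $a=-2$ the ratio genuinely tends to $1$, which is precisely why the $\eps$-neighbourhoods of $-2$ and $a_F$ are excised. The only other subtlety is that~\eqref{equMagic} controls $h$ only below the non-uniform scale $t_0(a)$; the decomposition into $X_m,Y_m$ together with the finite covering by short intervals removes this at no cost, Hausdorff dimension being stable under countable unions.
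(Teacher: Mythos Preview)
Your argument is correct and follows the same route as the paper: the paper packages your countable-decomposition H\"older argument as Lemma~\ref{lemFD} and your uniform bounds $\be_0>1>\be_1$ as Proposition~\ref{prop:epceps}, then combines them with Theorem~\ref{thmHolder} exactly as you do. Your closing remark that the uniform $X$-bound is ``not a compactness statement'' is slightly off --- on $K=[-2+\eps,a_F-\eps]$ the entropy is bounded below, and the paper does argue by compactness via the sequential Lemma~\ref{lem:acipconv} and Theorem~\ref{thm:Zdun} --- though the \cite{DobTod} input you identify is indeed what makes that lemma work.
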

This is much stronger than just absolute singularity of $h$.  Neglecting a neighbourhood of two points, full measure (note that $\hd(h(\F))=0$) gets mapped to dimension strictly less than one and dimension strictly less than one gets mapped to full measure. 
We shall also show that removing a neighbourhood of $a=-2$ is necessary, for otherwise the above dimensions tend to $1$, see \rthm{HDim1}. Numerically straightforward estimates indicate that the uniformity estimates extend to a  neighbourhood of $a_F$; however, the method known to the authors requires estimating multipliers in a renormalisation limit, whose rigorous proof would be inappropriately lengthy. 

It is possible to find positive measure subsets of $X$ on which $h(a)/\la(a)$ is arbitrarily large. This leads to:
\begin{thm}
\label{thmSuperHolder}
    Given $\eps>0$, there is a positive measure subset of $\F^c$ whose image under $h$ has dimension at most $\eps$. 
\end{thm}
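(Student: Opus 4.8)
The plan is to combine the sharp Hölder estimate of \rthm{Holder} with the elementary fact that, if a map $g$ is $\be$-Hölder continuous at every point of a set $Z$ with one and the same constant, then $g|_Z$ is genuinely $\be$-Hölder continuous and $\hd(g(Z)) \le \hd(Z)/\be$; since $Z \se \ip$ forces $\hd(Z) \le 1$, this yields $\hd(g(Z)) \le 1/\be$. Fix $\eps \in (0,1)$ and set $\be := 1/\eps$, so that $1/\be = \eps$. The key input — discussed in the next paragraph — is that $h(a)/\la(a)$ exceeds $\be$ on some positive-measure subset $Z_0$ of $X$. Granting this, \rthm{Holder}, in the form $\Hol(h,a) = h(a)/\la(a)$, shows $h$ is $\be$-Hölder continuous at every $a \in Z_0$, a priori with a point-dependent constant. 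We pass to a positive-measure subset $Z \se Z_0$ on which a single constant works, obtain $\hd(h(Z)) \le \hd(Z)/\be \le \eps$, and observe that $Z \se Z_0 \se X \se \V$ is disjoint from the flat set $\F$, so $Z$ is the desired positive-measure subset of $\F^c$.

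Producing $Z_0$ is the substantive part, and I expect it to be the main obstacle; it is what underlies the assertion, made just before the statement, that $h/\la$ can be made arbitrarily large on positive-measure subsets of $X$. On $X$, relation \eqref{eqn:chihrel} reads $h(a)/\la(a) = h(\mme^a)/h(\acip^a)$, so it suffices to exhibit, for arbitrarily large $M$, a positive-measure set of stochastic parameters in $X$ whose topological entropy is bounded below by a fixed $h_0 > 0$ (uniform in $M$) but whose acip satisfies $\chi(\acip^a) = h(\acip^a) < h_0/M$. I would get these by running a Jakobson / Benedicks--Carleson-type parameter-exclusion argument on a short interval around each member of a sequence of base parameters $a_*^{(k)}$ with $h(a_*^{(k)})$ bounded below but $\la(a_*^{(k)}) \to 0$: for instance Misiurewicz parameters taken near $-2$ (so $h(a_*^{(k)}) \to \log 2$ by continuity of $\htop$) whose critical orbits are preperiodic to repelling cycles of increasing period and bounded multiplier, or parameters on the non-hyperbolic side of saddle-nodes near $-2$, where the almost-parabolic bottleneck forces any acip to concentrate most of its mass on the region of near-unit derivative. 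The exclusion argument should return, near $a_*^{(k)}$, a positive-measure set of Collet--Eckmann parameters whose acips have Lyapunov exponent comparable to $\la(a_*^{(k)})$ and whose topological entropy is close to $h(a_*^{(k)})$; intersecting it with $X$ (full measure in $\F^c$ by \rprop{Quadwr}) and with $\{\chi(\acip^a) < h_0/M\}$ then yields $Z_0$ once $k$ is large enough. Simultaneously controlling the measure and the upper bound on $\chi(\acip^a)$ is the crux.

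For the remaining (soft) part one uniformises the Hölder constant and counts dimensions. For $a \in Z_0$ put $C(a) := \sup_{y \in \ip \sm \{a\}} |h(y)-h(a)| / |y-a|^{\be}$, which is finite because $h$ is $\be$-Hölder at $a$; using continuity of $h$ to restrict $y$ to rationals shows $\{a \in Z_0 : C(a) \le N\}$ is Borel, and these sets increase to $Z_0$ as $N \to \infty$, so by continuity of measure from below $Z := \{a \in Z_0 : C(a) \le N\}$ has positive Lebesgue measure for some $N$. For $a,b \in Z$ one has $|h(a)-h(b)| \le C(a)\,|a-b|^{\be} \le N|a-b|^{\be}$, i.e.\ $h|_Z$ is $(N,\be)$-Hölder continuous. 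Finally, given a cover $\{U_i\}$ of $Z$ by sets of small diameter $r_i$, the sets $h(U_i \cap Z)$ cover $h(Z)$ and have diameter at most $N r_i^{\be}$, whence $\mathcal H^s(h(Z)) = 0$ for every $s > \hd(Z)/\be$; therefore $\hd(h(Z)) \le \hd(Z)/\be \le 1/\be = \eps$, and the proof is complete modulo the construction of $Z_0$.
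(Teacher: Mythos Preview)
Your overall architecture matches the paper's exactly: find a positive-measure subset of $X$ on which $h(a)/\la(a)$ is at least $1/\eps$, invoke \rthm{Holder}, and feed this into a dimension lemma of the form $\hd(u(A)) \le \hd(A)/\alpha$ for $\alpha$-flat sets. The paper packages the last step as \rlem{FD}, whose proof is precisely your countable-partition-to-uniformise-the-constant argument, so steps~2--4 of your plan are complete and coincide with the paper.

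Where you diverge is in the construction of $Z_0$, which you correctly identify as the crux and leave open. The paper does \emph{not} work near $-2$; instead it uses \rprop{smallh}, whose input is \rfact{:DT30} (from \cite{DobTod}): near any non-renormalisable Misiurewicz parameter there is another non-renormalisable Misiurewicz parameter $a_1$ with $h(\acip^{a_1})$ arbitrarily small. One then quotes Tsujii's stability result \cite{Tsu95} to obtain a positive-measure set of Collet--Eckmann parameters $A$ near $a_1$ on which $\chi(\acip^a)=h(\acip^a)$ stays below $\eps'$, while $h(a)\ge \tfrac{\log 2}{2}$ by non-renormalisability; intersecting with the full-measure set where $\la(a)=\chi(\acip^a)$ finishes.

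Your suggested mechanism --- Misiurewicz base points near $-2$ with preperiodic orbits landing on repelling cycles of growing period and bounded multiplier --- controls the \emph{pointwise} exponent $\la(a_*)$ of the (eventually periodic) critical orbit, not $\chi(\acip^{a_*})$. For Misiurewicz maps these two quantities are generally different (the critical orbit is a finite set, not $\acip$-typical), and it is $\chi(\acip^{a_*})$, not $\la(a_*)$, that propagates to nearby stochastic parameters via Tsujii-type continuity. So this route, as written, has a gap. Your alternative (near saddle-nodes, where the acip concentrates on the funnel) is closer in spirit to what is needed, but making it rigorous again requires something like \cite{Tsu95} or the thermodynamic results of \cite{DobTod}. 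In short: right skeleton, and the missing bone is exactly \rprop{smallh}.
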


\subsection{Renormalisation}
\begin{defi}
\label{defRen}
We say that a unimodal map $g:I \to I$ is \emph{renormalisable} of period $n \geq 2$ if there is an open interval $J \ss I$ containing its turning point such that its images $g^i(J)$ with $i=0,1,\ldots,n-1$ are disjoint and the restriction of $g^n$ to $\ol J$ is unimodal. The interval $J$ is called a \emph{restrictive interval}.
    A period-two renormalisation is called a \emph{Feigenbaum} renormalisation. 
 \end{defi}
 The flat set $\F \subset \ip$ is the union of the interval $\left(a_F, \frac 14 \right]$ and the interior of all non-Feigenbaum renormalisation windows.
 \begin{defi} \label{defiPreC}
     A unimodal map $g$ is said to be \emph{pre-Chebyshev} if: a) $g$ is exactly $m$ times renormalisable, for some $m\geq 0$, and each renormalisation is of period two; b) if $J$ is the restrictive interval for the $m^{\mathrm{th}}$ renormalisation, $g^{2^m}_{|J} : J \to J$  is smoothly conjugate on $J$ to $x \mapsto 1 - 2|x|$ on $(-1,1)$.
 \end{defi}

\subsection{Unimodal families}
It is reasonable to ask for local results for more general unimodal families than the quadratic family. One must pass from the small scale in parameter space to the large scale in phase space to obtain such results. Tsujii \cite{Tsu} provided the tools to do this and we work within his setting. More general results are possible, but would require for example developing Tsujii-Benedicks-Carleson type results for unimodal maps with degenerate critical points or, for example, using Sands' techniques \cite{SanMMR} to deal with a lack of transversality. 

\medskip

\noindent
\textbf{Well-rooted family.}
    Let $I$ denote the compact interval $[-1,1]$. Consider
    $G : I\times [0,1] \to I$  of class $\cC^2$ with $g_t = G(\cdot, t)$, so $G$ defines a one-parameter family of interval maps $g_t : I \to I$. We denote by $\pa_1 G$, $\pa_2 G$ the partial derivatives with respect to the first and second variables. 
    Suppose each $g_t$  is a smooth unimodal map with  critical point  situated  at $0$. 
    For $g_0$ we impose that
    \begin{itemize}
        \item
            $g_0$ is a non-degenerate S-unimodal map;
        \item
            the Lyapunov exponent $\lambda_0$ of the critical value of $g_0$ exists and $\lambda_0 > 0$;
        \item
            Tsujii's weak regularity condition holds:
        \begin{equation} \label{equWRtake2}
        \lim_{\de \to 0^+} \liminf_{n \to \infty} \frac 1n \sum_{\matop{j=1}{\left| g_0^j(0) \right| \leq \de}}^{n} \ln |g_0'(g_0^j(0))|=0 ; 
        \end{equation}
        \item
            the following transversality condition holds at $t=0$:
    $$
    \sum_{j=0}^\infty \frac{\partial_2 G(g_t^j(0), t)}{(g^j_t)'(g_t(0))} \ne 0.$$
    \end{itemize}
    Such a $g_0$ satisfies the backward Collet-Eckmann condition and all periodic points of $g_0$ are hyperbolic repelling \cite[Theorem~A]{NowSan}. For such $g_0$, the topological entropy is positive. 
    \begin{defi} \label{defiWellG}
        We call $G$, as above, a \emph{\wg unimodal family}. 
    \end{defi}
    Let us simplify the notation by $h(t):=\htop(g_t)$. 
    We have the local, general form of \rthm{Holder}. It is unclear whether the monotonicity hypothesis is strictly necessary.
    \begin{thm} \label{thmHolderUnimodal} Let $G$ be a \wg family and assume that $h$ is monotone and that $h$ is not locally constant at $t=0$. Then
        $$
        \lim_{t\to 0^+} \frac{\ln | h(t)-h(0)|}{\ln|t|} = \frac{h(0)}{\lambda_0}.
        $$
    \end{thm}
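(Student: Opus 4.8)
The plan is to split the statement into one estimate in parameter space and one combinatorial estimate, connected by Milnor--Thurston kneading theory and by the transversality built into the \wg hypothesis, following the strategy of Tsujii \cite{Tsu}. Since $\lambda_0 > 0$ the critical point of $g_0$ is never periodic, so the kneading sequence $K_t$ of $g_t$ (the itinerary of $g_t(0)$) is well defined for small $t$ and $K_t \to K_0$ as $t \to 0^+$. We may assume $h$ is non-decreasing near $0$, the other case being symmetric. Monotonicity of $h$ together with non-flatness at $0$ then forces $h(t) > h(0)$, hence $K_t \ne K_0$, for all small $t > 0$ --- which is precisely what makes the expression in the limit meaningful. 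Let $N(t)$ be the length of the longest common prefix of $K_t$ and $K_0$, so $N(t) \to \infty$. I would reduce the theorem to \textbf{(I)} $N(t) = (1 + o(1))\,|\log t|/\lambda_0$, and \textbf{(II)} $\log|h(t) - h(0)| = -(1 + o(1))\,N(t)\,h(0)$, as $t \to 0^+$; granting these,
\[
\frac{\log|h(t)-h(0)|}{\log t} = \frac{\log|h(t)-h(0)|}{N(t)}\cdot\frac{N(t)}{\log t} \ \longrightarrow\ (-h(0))\cdot\Bigl(-\tfrac1{\lambda_0}\Bigr) = \frac{h(0)}{\lambda_0}.
\]

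For \textbf{(I)}, I would start from the variational identity $\frac{d}{dt}g_t^n(0)\big|_{t=0} = (g_0^{n-1})'(g_0(0))\sum_{j=0}^{n-1}\partial_2 G(g_0^j(0),0)/(g_0^j)'(g_0(0))$, obtained by solving the linear recursion attached to $g_t^n(0) = G(g_t^{n-1}(0),t)$. Transversality says the series $\sum_{j\ge0}\partial_2 G(g_0^j(0),0)/(g_0^j)'(g_0(0))$ --- absolutely convergent because $g_0$ is Collet--Eckmann --- is nonzero, so its partial sums stay bounded away from $0$ and $\infty$; hence $\bigl|\frac{d}{dt}g_t^n(0)\bigr|_{t=0} \asymp |(g_0^{n-1})'(g_0(0))| = e^{n\lambda_0+o(n)}$, using that the Lyapunov exponent $\lambda_0$ of the critical value exists. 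Upgrading this infinitesimal estimate to the displacement over $[0,t]$ is where Tsujii's weak regularity \eqref{equWRtake2} does its work: while $n \le N(t)$ the orbit of $g_t(0)$ shadows that of $g_0(0)$ with the same itinerary, and \eqref{equWRtake2}, together with non-degeneracy of the critical point (so that $\log|g_0'|$ near $0$ is comparable to $\log|\cdot|$), bounds the distortion sums controlling how $\frac{d}{ds}g_s^n(0)$ varies for $s \in [0,t]$, yielding $|g_t^n(0) - g_0^n(0)| = t\,e^{n\lambda_0 + o(n)}$ uniformly for $1 \le n \le N(t)$. A first itinerary disagreement at step $n$ forces this displacement to reach $\dist(g_0^n(0),0)$, which the existence of $\lambda_0$ already constrains to $e^{-o(n)}$ from below (a single value $\log|g_0'(g_0^n(0))| \le -\eps n$ would make the $n$-th Birkhoff sum of $\log|g_0'|$ along the critical orbit undershoot the $(n-1)$st, contradicting convergence to $n\lambda_0$), while on the other side the displacement saturates by $n \asymp |\log t|/\lambda_0$. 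Equating scales gives $N(t)\lambda_0 = |\log t|(1 + o(1))$. This is the technically heaviest part, though it follows the established Benedicks--Carleson--Tsujii method.

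For \textbf{(II)}, recall from \cite{MilThu, MisSzl} that $h(t) = -\log z(t)$, where $z(t) \in (\tfrac12,1)$ is the smallest zero of the kneading determinant $D_t(z) = \sum_n \theta_n(t)z^n$ (coefficients $\theta_n(t) \in \{-1,1\}$ determined by $K_t$), and that $z_0 := z(0)$ is a simple zero of $D_0$. Since $D_t$ and $D_0$ share their first $N = N(t)$ coefficients, $D_t - D_0$ is divisible by $z^N$, so $0 < D_0(z(t)) = |(D_t - D_0)(z(t))| \le 2z_0^N/(1-z_0)$; with $D_0(z(t)) \asymp z_0 - z(t)$ this gives the upper half $z_0 - z(t) \le Ce^{-Nh(0)}$, hence $|h(t)-h(0)| \le C'e^{-Nh(0)}$. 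The lower half is the delicate point, and the place where monotonicity of $h$ is genuinely used. Let $t_1 \le t$ be the infimum of the interval of parameters for which the kneading sequence agrees with $K_0$ on exactly $N$ symbols and carries the larger symbol at position $N+1$; by monotonicity $h(t) \ge h(t_1)$, and by continuity $h(t_1)$ equals the supremum of $h$ over the complementary cylinder, which contains $K_0$. So it suffices to see that $K_0$ does not lie within $e^{-(1+o(1))Nh(0)}$ of the top of its depth-$N$ kneading cylinder: equivalently, that $K_0$ and the maximal admissible sequence sharing its first $N$ symbols differ at some position $N + o(N)$, and that adjacent kneading cylinders at depth $N$ are separated in $h$-value at scale $z_0^{(1+o(1))N}$. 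Both follow from the critical orbit of $g_0$ not shadowing a periodic orbit for longer than $o(N)$ consecutive steps up to time $N$ --- a consequence of weak regularity \eqref{equWRtake2} and of the uniform hyperbolicity of the periodic orbits of $g_0$ --- together with admissibility constraints and the simplicity of the zero $z_0$. This yields $|h(t)-h(0)| \ge h(t_1) - h(0) \ge e^{-(1+o(1))N(t)h(0)}$, completing \textbf{(II)}.

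The main obstacle is this lower half of \textbf{(II)}: extracting the sharp $e^{-(1+o(1))N(t)h(0)}$ lower bound --- rather than mere positivity, which monotonicity and non-flatness give for free --- requires combining the monotonicity of $h$, the admissibility (shift-maximality) constraints on kneading sequences, and the dynamically-forced structure on how $K_0$ sits inside its cylinders; the careful bookkeeping of the cylinder widths and of the ``how long does the critical orbit shadow a periodic orbit'' estimate is where the real work lies. Finally, \rthm{Holder} follows by applying the present statement at each $a \in \V \cap \JWR$, once transversality of the quadratic family at such parameters is recorded.
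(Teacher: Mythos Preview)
Your split into (I) and (II) is natural, part (I) is essentially what Tsujii's distortion machinery provides (the paper packages this as Lemma~\ref{factLS} and \rlem{NCZ}), and the upper half of (II) via the kneading determinant is clean. The genuine gap is the lower half of (II), and your stated justification does not close it.

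Translate that lower bound to the tent-map side: with $b_0=e^{h(0)}$, $b=e^{h(t)}$ and $N=N(t)$, disagreement at position $N{+}1$ means $\Xi_N(b_0)$ and $\Xi_N(b)$ have opposite signs, and the derivative estimate $|\Xi_N'|\asymp b_0^N$ (\rlem{BruMisLem}) then gives only $|b-b_0|\gtrsim b_0^{-N}\,|\Xi_N(b_0)|$. So what you actually need is $|\Xi_N(b_0)|=|T_{b_0}^{N+1}(0)|\ge e^{-o(N)}$, a close-return bound for the \emph{tent-map} critical orbit. Weak regularity and existence of $\lambda_0$ give $|g_0^{N+1}(0)|\ge e^{-o(N)}$, but carrying this across the semiconjugacy $\phi$ requires H\"older regularity of $\phi^{-1}$ near $0$ (or a substitute), which you neither invoke nor prove. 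Your ``does not shadow a periodic orbit for more than $o(N)$ steps'' heuristic is not the right mechanism: a close return of $T_{b_0}^{N+1}(0)$ to $0$ need not arise from shadowing any periodic orbit, and shadowing a periodic orbit of $g_0$ whose exponent happens to equal $\lambda_0$ would not contradict existence of $\lambda_0$ anyway. The same hidden factor $|\Xi_N(b_0)|$ sits inside your claim that ``adjacent kneading cylinders at depth $N$ are separated in $h$-value at scale $z_0^{(1+o(1))N}$''.

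The paper's proof bypasses this by a different organisation. Rather than estimating $N(t)$ for arbitrary $t$, it manufactures a sequence $t_n\searrow 0$ with $\log t_{n+1}/\log t_n\to 1$ at which both the parameter-space and the tent-map estimates hold at \emph{uniformly large} scale, and then interpolates by monotonicity of $h$. The device is a fixed finite set $\SS_N$ of ``safe'' preimages of $0$ for $T_{b_0}$ (bounded depth, disjoint from the critical orbit), at definite mutual distance $2\theta>0$. Once $\xi_{k_n}$ maps $\omega_n\ni 0$ with bounded distortion onto an interval of length $\ge r_0$, the image $\phi(\xi_{k_n}(\omega_n))$ contains three safe elements (\rlem{Safe3}); catching the middle one yields $t_n\in\omega_n$ with $\phi(\xi_{k_n}(t_n))$ equal to the continuation of some fixed $q\in\SS_N$. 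On the tent side this forces $|\Xi_{k_n}(b_0)-q(t_n)|\ge\theta$, a constant independent of $n$, so $|b(t_n)-b_0|\asymp b_0^{-k_n}$ with honest multiplicative constants and no $e^{-o(N)}$ loss (\rprop{Content}). The safe-element trick also absorbs the pre-Chebyshev case where $\phi$ is only a semiconjugacy, which your kneading-determinant argument does not address.
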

    \begin{cor}
    	\label{corThmHolder}
        \rthm{Holder} holds. 
    \end{cor}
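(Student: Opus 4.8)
To deduce \rthm{Holder} from \rthm{HolderUnimodal}, the plan is to realise the quadratic family near each $a_0 \in \V \cap \JWR$ as a \wg family and then apply \rthm{HolderUnimodal} on each side of $a_0$. Fix $a_0 \in \V \cap \JWR$. Since $h \equiv 0$ on $[a_F,\tfrac14]$, that interval lies in $\F$, so $\V \subset [-2,a_F)$ and in particular $a_0 < \tfrac14$. For $a$ near $a_0$ let $p_a := \tfrac12\bigl(1+\sqrt{1-4a}\bigr) \in (\tfrac12,2]$ be the orientation-reversing fixed point of $f_a$ and let $\phi_a(x) := p_a x$ be the affine map carrying $[-1,1]$ onto the forward-invariant interval $[-p_a,p_a]$ of $f_a$. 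Put $g_a := \phi_a^{-1}\circ f_a\circ \phi_a$, so
$$g_a(x) = p_a x^2 + 1 - p_a$$
is a smooth unimodal self-map of $I:=[-1,1]$ with critical point fixed at $0$, with $g_a(\partial I) = \{1\} \subset \partial I$, and $a\mapsto g_a$ real-analytic near $a_0$. For $c>0$ small, define $G^{\pm}(x,t) := g_{a_0\pm ct}(x)$ on $I\times[0,1]$ (taking $G^-$ only if $a_0 > -2$); each $G^{\pm}$ is $\cC^2$.

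Affine conjugacy preserves the Schwarzian derivative, so $g_{a_0}$ has negative Schwarzian, hence $|g_{a_0}'|^{-1/2}$ is convex on each side of $0$; the critical point is non-degenerate since $g_{a_0}''(0) = 2p_{a_0}\neq 0$; and $|g_{a_0}'|= 2p_{a_0}>1$ on $\partial I$. Thus $g_{a_0}$ is non-degenerate S-unimodal. The pointwise Lyapunov exponent of the critical value and condition \eqref{equWRtake2} are invariant under affine conjugacy and under linear reparametrisation of $t$ (using $g_t^j(0) = f_{a(t)}^j(0)/p_{a(t)}$), so, because $a_0\in\JWR$, the Lyapunov exponent $\lambda_0 = \la(a_0)>0$ of the critical value of $G^{\pm}(\cdot,0)$ exists and \eqref{equWRtake2} holds.

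Transversality is the one substantive point. For the quadratic family in the parametrisation $a\mapsto f_a$, every Collet--Eckmann parameter — and thus every $a_0\in\JWR$ — is transversal; see the references used for \rprop{Quadwr} (Tsujii \cite{Tsu}, Avila--Moreira, Lyubich). This survives passage to $G^{\pm}$: differentiating $g_t^{j}(0) = \phi_{a(t)}^{-1}\!\bigl(f_{a(t)}^{j}(0)\bigr)$ with $a(t)=a_0\pm ct$, and using that $\phi_a'(\cdot)=p_a$ and $a'(\cdot)=\pm c$ never vanish, one finds that the series $\sum_{j\geq0}\partial_2 G^{\pm}(g_t^{j}(0),t)/(g_t^{j})'(g_t(0))$ at $t=0$ equals a nonzero multiple of the corresponding transversal series of $f_a$ at $a_0$ plus an error series which, by the Collet--Eckmann growth of $(f_{a_0}^{j})'(a_0)$ against uniformly bounded numerators coming from $\partial_2\phi$, converges absolutely to $0$; hence it is nonzero and $G^{\pm}$ is a \wg family. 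I expect this transfer to be the main obstacle: one must be sure that the (deep, cited) transversality of Collet--Eckmann quadratic parameters is genuinely unaffected by the $a$-dependent rescaling needed to normalise the interval to $[-1,1]$, and it is precisely the Collet--Eckmann property (membership of $\JWR$, used a second time) that kills the rescaling error.

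Finally, topological entropy is a conjugacy invariant, so $h_{G^\pm}(t):=\htop(g_{a_0\pm ct}) = h(a_0\pm ct)$, which is monotone in $t$ by the Milnor--Thurston theorem. Since $a_0\in\V$ we have $h^{-1}(h(a_0))=\{a_0\}$, so by continuity and monotonicity $h$ is non-constant on $(a_0,a_0+\eps)$ and, if $a_0>-2$, on $(a_0-\eps,a_0)$ for every $\eps>0$; thus $h_{G^\pm}$ is not locally constant at $t=0$. \rthm{HolderUnimodal} applies to each of $G^{+}$ and $G^{-}$ and gives
$$\lim_{t\to 0^{+}}\frac{\ln\bigl|h(a_0\pm ct)-h(a_0)\bigr|}{\ln|t|} = \frac{h_{G^\pm}(0)}{\lambda_0} = \frac{h(a_0)}{\la(a_0)}.$$
Absorbing the harmless additive constant $\ln c$ (negligible against $\ln|t|\to-\infty$) turns these into the one-sided limits of \eqref{equMagic} at $a_0$; combining the two (or, when $a_0=-2$, using the single available right limit, which is all that \eqref{equMagic} asks for at the left endpoint) proves \eqref{equMagic} for every $a_0\in\V\cap\JWR$, which is \rthm{Holder}.
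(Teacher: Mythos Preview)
Your proof is correct and follows essentially the same route as the paper: verify the \wg conditions for the quadratic family at $a_0\in\V\cap\JWR$ and invoke \rthm{HolderUnimodal}. The one place where you work harder than necessary is transversality: the paper simply quotes Levin's criterion that transversality holds whenever $\sum_{k\ge 0}|(f_a^k)'(a)|^{-1}<\infty$, which is immediate from $\la(a_0)>0$ and is manifestly unaffected by your affine rescaling (the derivatives $(g_t^k)'$ equal $(f_a^k)'$), so your error-series analysis is not needed; also, the references you cite for transversality (Tsujii, Avila--Moreira, Lyubich) concern the measure of good parameters rather than the transversality criterion itself, which is due to Levin \cite{Lev}.
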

    \begin{proof}
        We need to check that for $a \in \JWR$, $f_a$ verifies the conditions of $g_0$. 
        Quadratic maps are non-degenerate S-unimodal maps. 
        The definition of $\JWR$ covers the next two conditions. 
         Levin \cite{Lev} proved transversality holds if
        $$\sum_{k \geq 0} \frac 1{|(f_a^k)'(a)|} < \infty.$$
        The above sum is finite for all $a \in \JWR$ since $\la(a) >0$, so transversality does indeed hold. 
        Thus one can apply \rthm{HolderUnimodal}. 
    \end{proof}

    When $g_0$ is preperiodic, one could refine the estimates to obtain actual H\"older continuity of $h$ at $0$ with exponent $\Hol(h,0)$. 
    The argument follows the proof of \rthm{HolderUnimodal}, noting that for every time $n$, one can pass with bounded distortion from small scale in parameter space to large scale in phase space (see for example 
    \cite{SanMMR}).  

At the boundary of a flat interval for $h$, there is either a pre-Chebyshev or a (primitive) parabolic parameter. A pre-Chebyshev renormalises to a full-branched map, thus it is preperiodic and fully described by the previous remark. At parabolic parameters $h$ is very flat: 

\begin{thm}
\label{thmParab}
Let $G$ be a $\cC^2$ family of $\cC^2$ unimodal maps so that $g_0$ has a parabolic periodic orbit which attracts, but is disjoint from, its critical orbit. Then
$$\Hol(h,0) = \infty.$$
\end{thm}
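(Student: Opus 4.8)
The plan is to show that the entropy is constant (equal to $h(0)$) for all parameters $t$ on one side of $0$ in an interval shrinking faster than any power of $t$; since on the other side $h$ is locally constant by hypothesis (a parabolic parameter sits at the boundary of a flat interval), this forces $\Hol(h,0) = \infty$. First I would set up notation: let $p$ be a point on the parabolic periodic orbit of period $q$, so $(g_0^q)'(p) = 1$ (the multiplier is $+1$, since the orbit attracts the critical point from one side — if it were $-1$ we would pass to $g_0^{2q}$), and let $z_0 = g_0^q$ have a parabolic fixed point at $p$ with local form $z \mapsto z + c(z-p)^{k+1} + \cdots$ for some $k \geq 1$ and $c \neq 0$. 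The attracting petal contains the critical value's forward orbit. The key dynamical fact is that a parabolic periodic orbit can be perturbed in two ways: it either splits into an attracting/repelling pair of periodic orbits (on the side where $h$ stays flat) or disappears, opening a ``channel'' through the former parabolic point. On the channel side, orbits take a very long time — of order $|t|^{-k/(k+1)}$, by the standard Écalle–Voronin / parabolic-implosion estimate for time-of-passage through a narrow channel of width $\sim |t|$ — to traverse the neighbourhood of the old parabolic orbit.

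The heart of the argument is a kneading-theoretic or Milnor–Thurston stability statement: I would argue that for $t$ on the channel side with $|t|$ small, the kneading sequence of $g_t$ agrees with that of $g_0$ up to time $N(t)$, where $N(t) \gtrsim |t|^{-k/(k+1)}$, because the critical orbit of $g_t$ shadows that of $g_0$ until it has passed through the channel, and the critical orbit of $g_0$ is (eventually) trapped in the attracting petal and hence never returns near the critical point. Agreement of kneading data up to time $N$ gives $|h(t) - h(0)| \leq C \theta^{N(t)}$ for some $\theta \in (0,1)$ depending only on $h(0) > 0$ (positivity of entropy is guaranteed since $g_0$ is non-degenerate S-unimodal with the stated recurrence properties, as remarked after Definition~\ref{defiWellG} — wait, here $g_0$ need only be $\cC^2$ unimodal, so I should instead invoke that a parabolic orbit attracting the critical orbit forces $g_0$ to be renormalisable or non-renormalisable with $h(0)$ possibly zero; if $h(0) = 0$ the statement $\Hol(h,0) = \infty$ is interpreted via one-sided flatness and the channel-side estimate $|h(t)| \lesssim \theta^{N(t)}$ still does the job, so I would handle $h(0) = 0$ and $h(0) > 0$ uniformly by the exponential-in-$N$ bound, which degenerates gracefully). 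Combining, for $t$ on the channel side,
$$
\frac{\ln|h(t) - h(0)|}{\ln|t|} \geq \frac{N(t)\ln\theta + \ln C}{\ln|t|} \gtrsim \frac{|t|^{-k/(k+1)}\ln\theta}{\ln|t|} \longrightarrow +\infty,
$$
so every $\beta > 0$ works, i.e. $\Hol(h,0) = \infty$. On the flat side $h$ is eventually constant, contributing nothing. Hence $\Hol(h,0) = \infty$.

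The main obstacle is making the shadowing-plus-channel-passage estimate rigorous: one must control the critical orbit of the perturbed map $g_t$ as it enters the neighbourhood of the parabolic orbit, verify it actually enters the channel (rather than being deflected), bound the number of iterates spent there from below by $|t|^{-k/(k+1)}$ uniformly, and confirm that before entering and after exiting the channel the orbit stays close enough to the $g_0$-orbit that the kneading sequences cannot diverge. This is a parabolic-implosion / Fatou-coordinate computation in the real-analytic (here merely $\cC^2$) category; the $\cC^2$-only hypothesis means I should use explicit asymptotics for the return map near a parabolic point rather than analytic Fatou coordinates, e.g. via the change of variables linearising the $(k+1)$-th power term and estimating a telescoping sum. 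A secondary technical point is the passage from ``kneading sequences agree to time $N$'' to the quantitative entropy bound $|h(t)-h(0)| \lesssim \theta^N$; this follows from continuity of the largest root of the (truncated) kneading determinant, or from the Misiurewicz–Szlenk lap-number characterisation, since two unimodal maps whose kneading sequences agree to order $N$ have lap numbers of $g^n$ agreeing for $n \leq N$, and the entropy is the exponential growth rate of these lap numbers — a standard but slightly delicate estimate I would cite rather than reprove.
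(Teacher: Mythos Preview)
Your overall strategy matches the paper's: the core idea is that the critical orbit of $g_t$ lingers near the (former) parabolic orbit for a time of order at least $t^{-1/2}$, and this long persistence forces the entropy difference to be exponentially small in that time, hence smaller than any power of $t$. The paper proves this via Proposition~\ref{propParab}, whose two ingredients are a $\cC^2$ slow-passage lemma (Lemma~\ref{lemParab}, giving the $t^{-1/2}$ bound without distinguishing the ``channel'' side from the ``persistent attractor'' side) and a tent-map estimate (Lemma~\ref{lemUalpha}): if the critical orbit of $T_b$ stays for $n$ periods in a fixed neighbourhood of the periodic point $\alpha = T_{b_0}^N(0)$, then $|\log b - \log b_0| \leq C b_0^{-nk/l}$. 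The semi-conjugacy $\phi_t$ transports the long residence of $g_t$'s critical orbit near $p$ to long residence of $T_{\exp h(t)}$'s critical orbit near $\alpha$.

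Your route differs in this second ingredient: you want ``kneading sequences agree to time $N$ $\Rightarrow$ $|h(t)-h(0)|\leq C\theta^N$'' via stability of the smallest root of the kneading determinant. This is morally equivalent to Lemma~\ref{lemUalpha} (the tent map \emph{is} the kneading normal form), but the paper's formulation has two advantages. First, it does not require kneading agreement, only that the semi-conjugate tent orbit stays in a neighbourhood $U_\alpha$, so one avoids tracking signs along the whole orbit. Second, the paper's estimate explicitly allows the relevant zero to have order $l\geq 1$, which absorbs the case you flag as ``slightly delicate'' without needing simplicity of the kneading-determinant root. Conversely, your exponent $|t|^{-k/(k+1)}$ is sharper than the paper's uniform $t^{-1/2}$ for higher-order tangencies, though this refinement is unnecessary for $\Hol(h,0)=\infty$ and would need more than $\cC^2$ regularity to justify.

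Two small gaps to fix. You assert that one side of $t=0$ is flat ``by hypothesis'', but the theorem does not assume this; the paper's Lemma~\ref{lemParab} sidesteps the issue by giving the long-residence bound for all $t\in[0,t_0]$ regardless of whether the parabolic point persists or opens a channel (if it persists, the orbit never leaves and the bound is trivially satisfied). Your handling of $h(0)=0$ is also loose: the paper disposes of it by noting that the kneading sequence of $g_0$ is preperiodic, hence distinct from the Feigenbaum kneading sequence, so if $h(0)=0$ then $h\equiv 0$ on a one-sided neighbourhood.
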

For S-unimodal maps, the existence of a parabolic periodic point implies that the critical point is contained in the immediate basin of attraction. In \rsec{Parab}, we shall present more precise estimates for the flatness of entropy at parabolic parameters.

\subsection{H\"older estimates}

Guckenheimer \cite{Guc}
assumes that the entropy is bounded away from zero to obtain uniform H\"older regularity for unimodal families. We extend this result to the full quadratic family. 
\begin{thm}[{\cite{Guc}}]
\label{thmUniHol}
For the quadratic family, the function $h$ is H\"older continuous.
\end{thm}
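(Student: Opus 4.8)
The plan is to combine Guckenheimer's estimate --- which already gives uniform H\"older regularity wherever the entropy is bounded away from $0$ --- with a renormalisation argument treating the small-entropy regime. Since $h$ is continuous, monotone and identically $0$ on $\left[a_F,\tfrac14\right]$, it suffices to produce $C,\be>0$ with $|h(a)-h(a')|\le C|a-a'|^{\be}$ for all $a,a'\in[-2,a_F]$. Fix $\eps_0\in\left(0,\tfrac12\ln2\right)$. By monotonicity $\{h\ge\eps_0\}$ is an interval $[-2,b^*]$ with $b^*<a_F$, on which the quadratic family satisfies the hypotheses of Guckenheimer~\cite{Guc}; this gives the bound (with constants $\be_1,C_1$) whenever $h(a')\ge\eps_0$. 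If $a'\le b^*\le a$, then $h(a)\le h(b^*)=\eps_0$ and $|h(a)-h(a')|\le|h(b^*)-h(a')|\le C_1|b^*-a'|^{\be_1}\le C_1|a-a'|^{\be_1}$. Thus only pairs $b^*<a\le a'\le a_F$ remain, i.e.\ with $0<h(a')\le h(a)<\eps_0$.

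The key structural input is that $0<h(a)<\tfrac12\ln2$ forces $f_a$ to be renormalisable: otherwise $f_a$ would carry neither an attracting nor a neutral periodic orbit (such an orbit would make it either renormalisable or of zero entropy), so by the structure theory of S-unimodal maps it would be transitive on $[f_a^2(0),f_a(0)]$, whence $h(a)\ge\tfrac12\ln2$ by Blokh's theorem on transitive interval maps --- a contradiction. One then argues by induction on the renormalisation depth. For each admissible finite renormalisation combinatorics $\mathbf c$ of period $p$, the parameters realising it form an interval $W_{\mathbf c}$, and distinct $W_{\mathbf c}$ are disjoint or nested. On the subset of $W_{\mathbf c}$ where the straightened renormalisation $R_{\mathbf c}f_a$ is itself non-renormalisable, $h(R_{\mathbf c}f_a)\ge\tfrac12\ln2$, and $h(a)$ equals $\tfrac1p\,h(R_{\mathbf c}f_a)$ when $\mathbf c$ is satellite and $\max\!\left(h_0(\mathbf c),\tfrac1p\,h(R_{\mathbf c}f_a)\right)$ when it is primitive, where $h_0(\mathbf c)>0$ is the entropy forced by the period-$p$ pattern, capping the small-entropy oscillation over a primitive window. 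By the real \emph{a priori} bounds for S-unimodal maps, the straightened families $a\mapsto R_{\mathbf c}f_a$ on $W_{\mathbf c}$ have geometry controlled uniformly in $\mathbf c$, so Guckenheimer's estimate applies to each (their entropy exceeds $\eps_0$ on the relevant part), giving a $\be_1$-H\"older bound in the straightening coordinate; transporting this back through the straightening $\chi_{\mathbf c}:W_{\mathbf c}\to[-2,a_F]$ costs a factor equal to a power of the parameter-space dilatation of $\chi_{\mathbf c}$, but the outer $\tfrac1p$ --- and, on primitive windows, the floor $h_0(\mathbf c)$ --- absorbs this loss, the bookkeeping reducing, for the period-doubling cascades, to the universal Feigenbaum scaling and forcing the exponent $\be\le\ln2/\ln\delta_{\mathrm{F}}$, $\delta_{\mathrm{F}}$ being the Feigenbaum ratio. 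Combining the window estimates with the inductive hypothesis gives a uniform $\be$-H\"older bound (for $\be$ small enough, also $\be\le\be_1$) on the parameters of bounded renormalisation depth; these being dense in $[-2,a_F]$ and $h$ continuous, the bound extends to all of $[-2,a_F]$, and on the bounded interval $[-2,\tfrac14]$ it may then be weakened to any smaller exponent.

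The hard part is this last quantitative step, performed uniformly over the infinitely many combinatorial types --- precisely where one goes beyond Guckenheimer's bounded-entropy hypothesis. One must establish that the oscillation of $h$ over $W_{\mathbf c}$ is $\le C|W_{\mathbf c}|^{\be}$ with $C,\be$ independent of $\mathbf c$: equivalently, that the forcing entropy $h_0(\mathbf c)$, the dilatation, the period $p$ and the length $|W_{\mathbf c}|$ are linked tightly enough --- in particular, that the narrow, combinatorially complex period-$p$ windows, on which $h_0(\mathbf c)$ lies closest to $\tfrac1p\ln2$, are exactly those on which $h$ oscillates least --- so that $C$ and $\be$ do not deteriorate with the depth. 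The factorisation of the kneading determinant, $D_{f_a}(t)=D_{\mathbf c}(t)\,D_{\chi_{\mathbf c}(a)}\!\left(t^{p}\right)$, with $\exp(-h(a))$ the least positive zero of $D_{f_a}$ and $\exp(-h_0(\mathbf c))$ that of $D_{\mathbf c}$, is the concrete device for this bookkeeping. Finally, a pair $b^*<a\le a'\le a_F$ not lying in a single window is handled by localisation: one telescopes the window estimates --- together with the pointwise estimates of \rthm{Holder} and \rthm{Parab}, in their preperiodic and parabolic forms, at the endpoints of non-Feigenbaum windows --- along the finite chain of nested windows separating $a$ and $a'$, using monotonicity of $h$ to control the intermediate values. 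Combined with the first paragraph, this yields the uniform H\"older bound on $[-2,\tfrac14]$.
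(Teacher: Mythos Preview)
Your proposal is an outline rather than a proof, and you say so yourself: ``The hard part is this last quantitative step\ldots\ One must establish that the oscillation of $h$ over $W_{\mathbf c}$ is $\le C|W_{\mathbf c}|^{\be}$ with $C,\be$ independent of $\mathbf c$.'' That sentence is the whole theorem; everything before it is reduction. You never carry this step out. Invoking ``real \emph{a priori} bounds'' and the kneading factorisation $D_{f_a}(t)=D_{\mathbf c}(t)\,D_{\chi_{\mathbf c}(a)}(t^p)$ is reasonable scaffolding, but you have not shown how the parameter-space dilatation of $\chi_{\mathbf c}$ is actually controlled by $p$ (or by $|W_{\mathbf c}|$) uniformly in the combinatorics, nor why the $1/p$ factor ``absorbs'' it. The appeal to \rthm{Holder} and \rthm{Parab} at window endpoints does not help: the former gives only a limit of log-ratios at a single parameter, not a uniform $(C,\be)$-bound; the latter gives infinite flatness at one parabolic point, with no uniformity across the infinitely many parabolic endpoints. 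So the telescoping in your last paragraph has no constants to telescope.

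The paper avoids this uniformity problem by a different, more concrete decomposition. It uses only the Feigenbaum cascade: with $a_m$ defined by $h(a_m)=2^{-m}\log 2$, Feigenbaum scaling makes $h$ H\"older on the discrete set $\{a_m\}$, so it suffices to get a single $(C_1,\alpha_1)$ valid on every interval $[a_m,a_{m+1}]$. On such an interval the paper does \emph{not} induct on further renormalisation. Instead it argues directly: if $|h(a)-h(a')|>C\,2^{-m}2^{-n/2}$ then, via the Brucks--Misiurewicz derivative estimate for $\Xi_n$, there is a periodic tent parameter $\hat b$ of period $\le 2^m n$ with $h^{-1}(\log\hat b)\subset[a,a']$; the length of that hyperbolic window is then bounded \emph{below} by $\exp(-(n+m)\rho)$ using Przytycki's lemma (no attracting orbit forces the critical orbit away from $0$) together with the universal Sullivan bound $|g'|\le\Ga$ on deep Feigenbaum renormalisations to control $|\xi'_{2^mn-1}|$. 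Comparing the two inequalities gives $|h(a)-h(a')|\lesssim|a-a'|^{\alpha_1}$ with $\alpha_1=\tfrac{\log 2}{2\rho}$ independent of $m$. This replaces your uniform-over-all-$\mathbf c$ estimate by a single explicit lower bound on the size of one hyperbolic window, which is much easier to make uniform.
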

The exponent $\beta$ of H\"older continuity which is obtained will depend on a bound from renormalisation theory. 
Numerical and heuristic estimates 
lead us to
 formulate the following, recalling $a_F = \inf\{a : h(a) = 0\}$.
\begin{conj} \label{conjUniHol}
    There exists $\be>\frac 12$ such that, on any compact subinterval of $(-2, a_F)$, $h$ is $\be$-H\"older continuous.
\end{conj}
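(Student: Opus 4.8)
The plan is to make the mechanism of \rthm{HolderUnimodal} uniform over the quadratic family, guided by \eqref{equMagic}: since the local H\"older exponent at $a$ is $h(a)/\la(a)$, one wants, on a compact $K \ss (-2,a_F)$, a uniform lower bound $\inf_{a\in K} h(a)/\la(a) > \tfrac12$ together with uniform control of the implied constants. Fix $K=[c,d]$. As $h$ is decreasing and positive there, $h \ge h(d)=:h_0>0$ on $K$, and this bounds the renormalisation depth uniformly: if $f_a$ is $m$ times renormalisable with periods $p_1,\dots,p_m$, then $h(a)=\tfrac1{p_1\cdots p_m}\htop(g^{p_1\cdots p_m}_{|J}) \le \tfrac{\log 2}{p_1\cdots p_m}$, so $p_1\cdots p_m \le \log 2/h_0$. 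Thus on $K$ one meets only \emph{bounded-depth} renormalisation, and the renormalisation-limit multiplier estimates that obstruct extending \rthm{Image} to a neighbourhood of $a_F$ are not needed. Using bounded geometry of the finitely many relevant windows one passes between outer and inner parameter scales with uniformly bounded distortion, reducing matters to the non-renormalisable parameters in $K$ (after rescaling), where one can run the Tsujii-type transfer from small parameter scale to large phase scale underlying \rthm{HolderUnimodal}.

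For the core estimate, a parameter perturbation of size $t$ is, after $n$ steps and by transversality, magnified to scale $\approx |t|\,|Df^n_a(a)| = |t|\,e^{n\la_n(a)}$ in phase space, where $\la_n(a)=\tfrac1n\log|Df^n_a(a)|$; the kneading data, hence $h$, registers the splitting once this reaches a fixed scale, i.e.\ at $n \approx \log(1/|t|)/\la_n(a)$, and the entropy changes by $\approx e^{-n\,h(a)} \approx |t|^{\,h(a)/\la_n(a)}$. To upgrade this to $|h(a+t)-h(a)| \le C|t|^\be$ with one pair $(C,\be)$, $\be>\tfrac12$, valid for all $a\in K$ simultaneously, one needs a uniform bound $\la_n(a) \le (2-\eta)h(a)$ for all large $n$ and all $a\in K$ (equivalently $\sup_{a\in K}\la(a)/h(a)<2$ when $\la(a)$ exists) together with uniform control of the constants; on most of $K$ this follows from the bounded-depth reduction, from continuous dependence of $\chi(\mme^a)$ on $a$, and from $\la(a)=h(\acip^a)<h(a)$ on the full-measure set $\hat X$, where the ratio is in fact below $1$.

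The main difficulty, and the reason the statement remains conjectural, is that the bound $\sup_{a\in K}\la(a)/h(a)<2$ is sharp on $K$. At the tip of each real baby Mandelbrot set meeting $K$ — a Chebyshev-type post-critically finite parameter whose critical value lands on a repelling cycle whose first-return map is hybrid-equivalent to $f_{-2}$ — one computes $\la(a)=2h(a)$ exactly, mirroring $\la(-2)=2\chi(\mme^{-2})=2\chi(\acip^{-2})$, so that \eqref{equMagic} gives H\"older exponent exactly $\tfrac12$ there. Such parameters are dense in $K$, so a uniform $\be>\tfrac12$ would require either excising small neighbourhoods of these (countably many) parameters — just as one already excises a neighbourhood of $a_F$, where the period-doubling cascade forces exponent $\log 2/\log\delta<\tfrac12$, and the point $-2$ itself — or proving a sub-power sharpening of \eqref{equMagic} at them, replacing the mere $|t|^{1/2+o(1)}$ that the Lyapunov calculation gives by a genuine bound below $|t|^{1/2-\eps}$, while ruling out any slow-oscillation mechanism for $\la_n$ (the one in the discussion after \rthm{Holder}) that could drive $\la_n/h$ up to $2$ elsewhere on $K$. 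Establishing that sharpening is where the argument stands or falls; the remaining boundary behaviour is harmless, since at primitive parabolic parameters $h$ is infinitely flat by \rthm{Parab}.
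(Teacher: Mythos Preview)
This statement is a \emph{conjecture}: the paper offers no proof, only the remark that numerical and heuristic estimates motivate it, together with the boundary computations $\Hol(h,-2)=\tfrac12$ and $\Hol(h,a_F)=\log 2/\log\de^*<\tfrac12$. Your write-up is accordingly not a proof but a sketch of an approach together with what you present as the fatal obstruction; that obstruction, however, is incorrect.

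You assert that at the Chebyshev tip of every real baby Mandelbrot set one has $\la(a)=2h(a)$ exactly, ``mirroring'' $a=-2$. But hybrid equivalence of the first-return map to $f_{-2}$ does not transport the multiplier of the boundary repelling cycle, so the equality does not follow. Concretely, take $a_1$, the tip of the period-two copy (see~\rsec{UniHol}), where $h(a_1)=\tfrac12\log 2$ and the critical orbit lands on the $\alpha$-fixed point; then $\la(a_1)=\log|2\alpha|$ with $|\alpha|\approx 0.84<1$ (numerically $a_1\approx -1.543$), giving $h(a_1)/\la(a_1)\approx 0.67$, not $\tfrac12$. Indeed, Theorem~\ref{thm:Zdun} is precisely the statement that the rigidity underpinning $\la=2h$ at $-2$ (namely $\mme=\acip$, equivalently smooth conjugacy of the renormalisation to $T_2$) fails at every other real quadratic parameter; your argument implicitly assumes the opposite.

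So the density of Chebyshev tips is not, as you claim, an obstruction forcing the exponent down to $\tfrac12$ on every compact $K\subset(-2,a_F)$. The genuine difficulty --- and the reason the statement is left as a conjecture --- is rather to establish a \emph{uniform} bound $\la(a')<(2-\eta)\,h(a')$ across all preperiodic parameters in $K$ (over all combinatorial types of renormalisation, not just the Feigenbaum cascade) and, separately, to control the constants in \rthm{HolderUnimodal} uniformly, including at parameters where $\la_n$ oscillates. Your outline of the reduction via bounded-depth renormalisation is reasonable up to that point, but the specific barrier you name is not there.
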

Finding the optimal $\be$ appears difficult. 
As estimated in \cite{Guc},
$\Hol(h,-2) = 1/2$, while 
straightforward computation gives
$$\Hol(h,a_F) = \frac{\ln 2}{\ln \de^*}=0.449\ldots,$$
where $\de^*=4.669\ldots$ is the first Feigenbaum constant. In particular, the exponent is worse at the boundary of $[-2,a_F]$.

\subsection{Further remarks}

Recent results \cite{BanRug, CarTio} consider the regularity of the entropy in families of dynamical systems with holes. That is, for a fixed map on the circle or the interval, they remove an interval and all its preimages from the phase space, thus modelling physical systems which leak mass. By varying the size and the position of the initial gap, they show that the entropy is H\"older continuous. In both cases, the regularity constant involves the value of the entropy itself.

Dudko and Schleicher and Tiozzo \cite{DudSch, Tio} have developed  the theory of \emph{core entropy} of complex quadratic polynomials, 
as introduced by Thurston.  They proved continuous dependence of core entropy on the parameter. Their work does not apply to regularity of $h$. 

\subsection{Structure}

Unimodal maps are canonically semi-conjugate to tent maps with the same entropy. Facts concerning tent maps, together with some further preliminaries, are presented in the following section. 

The proof of \rthm{HolderUnimodal} will depend on studying the movement of critical orbits as one varies the parameter, both for  unimodal families of smooth maps and the family of tent maps. The speed depends on the Lyapunov exponents of the image of the respective turning points. For the tent map, the Lyapunov exponent is just the entropy, and the parameter change is comparable to the entropy change. Combining these elements forms the backbone of the proof, presented in \rsec{HU}. Theorem~\ref{thmHolder} follows from \rthm{HolderUnimodal}, as already noted. In \rsec{PAR}, we provide refined estimates near parabolic parameters which imply \rthm{Parab}.

In \rsec{Measure}, we show that certain sets of parameters have large measure. 

Theorem~\ref{thm:Zdun} is proven in~\S\ref{S:Vmme}. More generally, 
in~\rsec{Typ}, we study the relation between $h(a)$ and $\la(a)$.   For $a \in X\cup Y$,  $h(a) \ne \la(a)$. We use Theorem~\ref{thm:Zdun} and thermodynamic formalism for families of unimodal maps to obtain uniformity in $h(a) \ne \la(a)$ on  $(X \cup Y)\cap [-2+\eps, a_F-\eps]$. 
Using \rthm{Holder} and a simple dimension lemma, we prove Theorems~\ref{thm:main} and~\ref{thmSuperHolder}.

In the final section we prove \rthm{UniHol}, giving uniform H\"older continuity for the entire quadratic family.

\subsection{Acknowledgments}
The authors thank Magnus Aspenberg, Viviane Baladi, Michael Benedicks, Davoud Cheraghi, Jean-Pierre Eckmann, Jacek Graczyk and Masato Tsujii for helpful comments and conversations.

\label{sectIntro}

\section{Preliminaries}
\label{sectPrel}
\def\ln{\log}

\subsection{Tent maps, kneading itineraries and conjugacies}
\label{sectTent}
Let us introduce the family of (symmetric) tent maps 
$$T_b:x \mapsto 1 - b|x|$$
for $b \in (1,2]$ and $x\in\R$. The orientation-preserving fixed point is $\frac{-1}{b-1}$. 
$T_b$ restricted to the interval
$$
    \hat I_b := \left[\frac{-1}{b-1}, \frac{1}{b-1}\right]$$ 
    is unimodal. 
If $b > \sqrt{2}$ then $T_b$, restricted to $\hat I_b$, is not renormalisable, while if $b \in (1, \sqrt{2}]$, it is (Feigenbaum) renormalisable of period two. 
The tent map  $T_b$ has slope $\pm b$ and topological entropy $\log b$. We call a tent map \emph{periodic} if the orbit of the turning point $0$ is periodic. The period is necessarily at least $3$. 
 
 Given a unimodal map $g$, smooth on the complement of its turning point $c$, let 
 $$
 K(g):=k_1 k_2 \ldots, \text{ where }k_i=\sgn(g'(g^i(c))\in\{0, \pm 1\}.$$
 We call $K(g)$ the \emph{kneading itinerary} of $g$. 
 For $i\geq 0$, let $d_i:=\prod_{j=1}^i k_j$, thus $d_0=1$, $d_1=\sgn(g'(g(c)))$ and $d_i = \sgn((g^i)'(g(c)))$. The kneading determinant $D_g$ is defined by
$$D_g(t):=1+d_1 t + d_2 t^2 + \cdots.$$
By Milnor and Thurston \cite{MilThu},  
$$s:=\inf\{t > 0\ :\ D_g(t)=0\}\in(0,1)$$ 
if and only if $$\htop(g) > 0,$$
and in this case
$$\htop(g) = -\ln s.$$
In particular, the kneading itinerary determines the entropy. 

Following the notation of \cite{BruMis}, for $n \geq 0$, let
\begin{equation}
\label{equDefXi}
	\Xi_{n}(b) := T^{n}_b(1).
\end{equation}
Observe that $$\Xi_i(b)=1-b|\Xi_{i-1}(b)|=1-b\,\sgn(\Xi_{i-1}(b))\Xi_{i-1}(b)=1+k_i b\,\Xi_{i-1}(b).$$
Note that $\Xi_{i-1}(b) \in \hat I_b$, so  if $|\Xi'_{i-1}(b)|$ is sufficiently large, then 
\begin{equation}\label{equXiratio}
    1+\frac{b-1}2 < \left| \frac{\Xi'_i(b)}{\Xi_{i-1}'(b)}\right| < 1 + 2(b - 1).
\end{equation}
\begin{lem} \label{lemBruMisLem}
    Given $b_*>1$, there are constants $C,N\geq1$ such that, if $b \geq b_*$ and $n \geq N$, the derivative $\Xi_n'(b)$ exists unless
    $$ \Xi_j(b) = 0$$
    for some $j$ with  $1 \leq j < n$. If $\Xi_n'$ exists on an interval $V \subset [b_*, 2]$ then
    \begin{equation}\label{equXibnCbn}
    C^{-1}b_0^n \leq |\Xi_n'(b)| \leq Cb_0^n
\end{equation}
    for all $b,b_0 \in V$. 
\end{lem}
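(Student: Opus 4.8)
\medskip
\noindent\textbf{Proof plan.}
The differentiability claim is immediate by induction: $\Xi_0\equiv 1$ and $\Xi_i(b)=1-b\,|\Xi_{i-1}(b)|$ exhibit $\Xi_n$ as a continuous, piecewise-polynomial function of $b$ which is real-analytic wherever none of $\Xi_1,\dots,\Xi_{n-1}$ vanishes. For the upper bound in~\eqref{equXibnCbn}, differentiating the recursion gives $\Xi_i'=k_i(\Xi_{i-1}+b\,\Xi_{i-1}')$, so, using $|\Xi_{i-1}(b)|\le (b-1)^{-1}$ (valid since $\Xi_{i-1}(b)\in\hat I_b$), one gets $|\Xi_i'(b)|\le b\,|\Xi_{i-1}'(b)|+(b-1)^{-1}$, hence $|\Xi_n'(b)|\le\bigl(1+(b_*-1)^{-2}\bigr)b^{n-1}$ for all $n\ge1$, $b\ge b_*$. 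The elementary ratio estimate~\eqref{equXiratio} is suggestive but too lossy for the lower bound, as it can cost a multiplicative constant at each step; so the heart of the matter is a sharp lower bound, obtained as follows.

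The first move is to rewrite the critical orbit through the kneading determinant. Unrolling $\Xi_i=1+k_i b\,\Xi_{i-1}$ yields the closed form $\Xi_n(b)=d_n\,b^n D_n(1/b)$, with $D_n(t):=\sum_{m=0}^{n}d_m t^m$ the $n$-th partial sum of $D_{T_b}$; in particular $|D_n(1/b)|=b^{-n}|\Xi_n(b)|\le b^{-n}(b-1)^{-1}$. Differentiating the identity $D_n(1/b)=d_n b^{-n}\Xi_n(b)$ in $b$ (the integer $d_n$ being locally constant on the domain where $\Xi_n'$ exists) and rearranging gives
$$\Xi_n'(b)=\frac{n\,\Xi_n(b)}{b}-d_n\,b^{n-2}D_n'(1/b).$$
Now $D_n'(1/b)\to D_{T_b}'(1/b)$ with an error $O(n\,b^{-n})$ uniform for $b\ge b_*$, while $|n\,\Xi_n(b)/b|=O(n)$, so the display yields $\Xi_n'(b)=-d_n\,b^{n-2}D_{T_b}'(1/b)+O(n)$, the implied constants depending only on $b_*$. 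This reduces the problem to the single quantitative statement
$$\delta(b_*):=\inf_{b\in[b_*,2]}\bigl|D_{T_b}'(1/b)\bigr|>0,$$
the complementary bound $|D_{T_b}'(1/b)|\le b^2(b-1)^{-2}$ being a one-line estimate. Granting $\delta(b_*)>0$, for $n\ge N(b_*)$ large enough that the $O(n)$ term is absorbed, $|\Xi_n'(b)|\asymp b^{n-2}|D_{T_b}'(1/b)|\asymp b^n$ with constants depending only on $b_*$; in passing, this shows $\Xi_n'$ cannot vanish on any interval on which it exists.

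Establishing $\delta(b_*)>0$ is the step I expect to be the main obstacle; it is a uniform transversality property of the tent family, equivalent to saying that the first positive zero $t=1/b$ of $D_{T_b}$ is simple, uniformly in $b$. The route I would take uses Milnor and Thurston's structure theory~\cite{MilThu}: the generating function $\sum_{n}L_n(T_b)t^n$ of the lap numbers equals $1/D_{T_b}(t)$ up to an explicit factor that is continuous and nonvanishing near $t=1/b$; hence the zero of $D_{T_b}$ at $t=1/b$ is simple, and, comparing the residue of the resulting simple pole against the two-sided bound $L_n(T_b)\asymp b^n$ (with constants depending only on $b_*$), one bounds $|D_{T_b}'(1/b)|$ from below, uniformly on $[b_*,2]$. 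An alternative is a renormalisation induction: treat the non-renormalisable range $(\sqrt2,2]$ by compactness and continuity, and on the renormalisable range relate $D_{T_b}$ to $D_{T_{b^2}}$ through the period-two substitution on kneading sequences. This is precisely where $N$ and $C$ must be allowed to depend on $b_*$, since $\delta(b_*)\to0$ and the convergence $D_n'(1/b)\to D_{T_b}'(1/b)$ slows down as $b_*\to1^{+}$.

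It remains to upgrade $|\Xi_n'(b)|\asymp b^n$ to the uniform estimate~\eqref{equXibnCbn} over the interval $V$. Since $\Xi_n'$ does not vanish on $V$, the function $\Xi_n$ is strictly monotone there; as $\Xi_n(V)$ lies in $[-(b_*-1)^{-1},1]$, an interval of length at most $1+(b_*-1)^{-1}$, while $|\Xi_n'|\ge c(b_*)\,b_*^{n}$ on $V$, the length of $V$ is $O(b_*^{-n})$, with implied constant depending only on $b_*$. Consequently, after enlarging $N(b_*)$ if necessary, $(b/b_0)^{n}\le\exp\bigl(n\,|b-b_0|/b_0\bigr)\le 2$ for all $b,b_0\in V$ and $n\ge N$, so $b^n\asymp b_0^n$ and~\eqref{equXibnCbn} holds with a constant $C$ depending only on $b_*$.
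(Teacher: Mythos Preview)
Your approach is correct and genuinely different from the paper's. The paper cites Brucks--Misiurewicz for the non-renormalisable range $b_*\ge\sqrt2$ as a black box, and for $b\in(1,\sqrt2]$ uses the explicit affine conjugacy $A_b$ between $T_b^{2^m}$ (on the restrictive interval) and $T_{b^{2^m}}$ to pull the estimate back from the non-renormalisable range, differentiating $\Xi_{2^mk-1}(b)=a_b^{-1}\Xi_{k-1}(b^{2^m})$ and then interpolating between the sparse times $2^mk-1$ via the ratio bound~\eqref{equXiratio}. Your route is instead structural and self-contained: the closed form $\Xi_n(b)=d_n b^n D_n(1/b)$ and the resulting identity $\Xi_n'(b)=-d_n b^{n-2}D_{T_b}'(1/b)+O(n)$ make transparent \emph{why} the lemma holds --- it is precisely the statement that the smallest zero of the kneading determinant is simple, uniformly in $b$. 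This buys a direct link to Milnor--Thurston theory and avoids the external reference; the paper's approach buys concreteness and sidesteps having to prove $\delta(b_*)>0$ from scratch. Your final paragraph (bounding $|V|$ via monotonicity of $\Xi_n$ and hence controlling $(b/b_0)^n$) coincides with the paper's.

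One caution on your sketched step $\delta(b_*)>0$. The alternative ``compactness and continuity on $(\sqrt2,2]$'' does not work as stated, because $b\mapsto D_{T_b}'(1/b)$ is \emph{not} continuous: at each periodic parameter $b_0$ of period $p$ the tail coefficients $d_m$, $m\ge p$, switch, producing a jump of size $O(p\,b_0^{-p})$, which is not small for the low periods present in $(\sqrt2,2]$. Your primary route via the lap-number formula is sound, but to read off a uniform lower bound on $|D_{T_b}'(1/b)|$ from the residue you need the two-sided estimate $L_n(T_b)\asymp b^n$ with constants depending only on $b_*$; the lower bound $L_n\ge b^n$ is immediate from the constant slope, while the matching upper bound and the absence of further zeros of $D_{T_b}$ on the circle $|t|=1/b$ require a short additional argument. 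These are details to be filled in rather than a gap in the strategy; your renormalisation alternative (relating $D_{T_b}$ to $D_{T_{b^2}}$) is, incidentally, the kneading-determinant transcription of exactly what the paper does.
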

\begin{proof}
    Brucks and Misiurewicz \cite{BruMis} proved this lemma for $b_* = \sqrt{2}$. 
We shall consider the case $b \in (1, \sqrt{2}]$. On every interval on which $\Xi_j(b) \neq 0$ for all $1 \leq j < n$, $\Xi_n$ is a polynomial of degree $n$, thus continuously differentiable.

For $m \geq 0$ and $b_m = 2^{2^{-m}}$, one has $\Xi_j(b_m) \ne 0$ for any $j \geq 1$. The derivative is continuous at each $b_m$ and they form (for bounded $m$) a discrete set of parameters, so we can omit them from our considerations.

Let $m \geq 1$ be such that $b^{2^m} \in (\sqrt{2}, 2)$ and 
set $\hat b = b^{2^m}$. Note that $T_b$ is $m$ times renormalisable. Let $A_b$ denote the affine rescaling which maps the closure of the restrictive interval for $T_b^{2^m}$ to the interval $\hat I_{\hat b}$, so
    $$
    A_b \circ T_b^{2^m} = T_{\hat b} \circ A_b$$
    on $A_b^{-1}(\hat I_{\hat b})$. 
    Since $A_b(0) = 0$, $A_b$ is just multiplication by some number $a_b$ bounded away from $0$. The boundaries of the restrictive interval and of $\hat I_{\hat b}$ are preperiodic points with smooth continuations, so $b \mapsto a_b$ is smooth as long as $T_b$ remains $m$-times renormalisable.
    Differentiating 
    $$\Xi_{2^mk-1}(b) = a_b^{-1}\Xi_{k-1}(b^{2^m}),$$ 
    one obtains
    $$
    \Xi'_{2^mk -1}(b) =\frac{ 2^mb^{2^m-1}}{a_b}\Xi'_{k-1}(\hat b) - \Xi_{k-1}(\hat b) \frac{a_b'}{a_b^2}.$$
    By \cite{BruMis}, if $\Xi_j(\hat b) \ne 0$ for $j=1,\ldots, k-2$, then $\Xi_{k-1}'(\hat b)$ exists and is comparable with $\hat b^{k-1}$, so this term dominates for large $k$, $a_b$ and $a_b'$ being bounded functions. 
    Bound~\eqref{equXibnCbn} thus holds for $n= 2^mk-1$, for all large $k$, for $b = b_0$. 
    Applying~\eqref{equXiratio} allows one to interpolate, giving~\eqref{equXibnCbn} for all large $n$, for $b=b_0$.

    Let $V \subset [b_*, 2]$ be an interval on which $\Xi_n'$ exists. On $V$, $|\Xi_n'| \geq C^{-1} b_*^n$. 
    For $b, b_0 \in V$, we deduce 
    $$|b-b_0| \leq  C b_*^{-n}.$$
    Therefore, 
    $$\frac{b^n}{b_0^n} = \left(1 + \frac{b - b_0}{b_0}\right)^n \leq C_1.$$ 
    Redefining $C$,~\eqref{equXibnCbn} is proven. 
    \end{proof}

For a quadratic map $f_a : x \mapsto x^2 +a$, $a \in \ip$, the fixed points solve $x^2 +a = x$, so $f_a$ restricted to the interval 
\label{pageIa}
$$
I_a := \left[\frac{-1 - \sqrt{1-4a}}{2}, \frac{1 + \sqrt{1-4a}}{2}\right]$$
is unimodal.

Given any unimodal map $g$, 
there is a canonical semi-conjugacy $\phi$, 
    $$
    \phi \circ g 
    = 
    T_b \circ \phi,
    $$
    between $g$ and $T_b$, where $b = \exp(\htop(g))$. See \cite[Chapter~II]{deMvS} as a general reference for this and the following statements. 
    S-unimodal maps do not have wandering intervals, and any non-repelling periodic orbit (or non-degenerate interval of periodic points) has the critical point in its immediate basin. If $T_b$ is not periodic and $g$ is S-unimodal, then $\phi$ is a conjugacy. If $T_b$ is periodic, the interior of the preimage of  the turning point by the conjugacy, \emph{i.e.}\ of $\phi^{-1}(0)$, is a restrictive interval for $g$. Since the period of $T_b$ is greater than two,  $g$ is renormalisable of some period other than two. 

    If $G$ is a \wg family, the entropy $\htop(g_t)$ is a continuous function of $t$ \cite[Theorem~2]{MisJumps}. Thus $\htop([g_{t_0}, g_{t_1}])$ contains $[\htop(g_{t_0}), \htop(g_{t_1})]$. 
    If $\log b \in (\htop(g_{t_0}), \htop(g_{t_1}))$, there exists $t \in (t_0, t_1)$ for which $g_t$ and $T_b$ have the same kneading itinerary, that is, 
\begin{equation}
\label{equKnead}
    K(g_t) = K(T_b),
\end{equation}
which implies that $g_t$ and $T_b$ are semi-conjugate and have the same topological entropy $\log b$. If $T_b$ is periodic and $g_t$  satisfies~\eqref{equKnead}, the critical orbit of $g_t$ is periodic and, therefore, super-attracting.

     If $g$ is a S-unimodal with 
     $\htop(g) \leq \frac{\ln 2}{2^m}$, then $g$ is $m$ times Feigenbaum renormalisable (that is, with period two) to a unimodal map $\hat g$ with 
\begin{equation}
\label{equRenHTop}
	\htop(\hat g) = 2^m \htop(g).
\end{equation}

\subsection{Notations and conventions}
\label{sectNot}

Given two  non-negative expressions $A(\cdot)$ and $B(\cdot)$, we say that $A$ dominates $B$ and write 
$$A \gtrsim B,$$
if there exists a constant $C>0$ such that
$$A \geq C B.$$
We say that $A$ and $B$ are comparable and write 
$$A \mc B$$
if $A \gtrsim B$ and $B \gtrsim A$.
As entropy is a logarithmic expression, we will need an additive equivalent. We say
$$A \ac B$$
if $e^A \mc e^B$. 

We write $(x,y)$ for the open interval bounded by $x,y$ irrespective of which of $x,y$ is greater, and similarly for half-open or closed intervals.

%
%
\section{Proofs of Theorems \ref{thmHolderUnimodal} and \ref{thmParab}}
\label{sectMainproof}

Following Tsujii \cite{Tsu}, we denote the \emph{distortion} of a smooth map $\psi$ on an interval $J$ by
$$\Dist(\psi,J):=\sup_{x,y\in J}\log \left| \frac{\psi'(x)}{\psi'(y)} \right| \in \ol\R_+.$$

Let $G : I \times [0,1] \to I, \quad G : (x,t) \mapsto g_t(x)$ be a $\cC^2$ family of $\cC^2$ unimodal maps with  each $g_t$ having its critical point at $0$. 
Define, for $n \geq 0$, 
    $$  
    \xi_n(t) := g_t^n(g_t(0)).$$
    These functions describe the displacement of the critical orbit as one varies the parameter. It is straightforward to check that 
\begin{equation}
\label{equQn}
	\frac {\xi_n'(t)}{(g_t^n)'(g_t(0))} = \sum_{j=0}^{n} \frac{\partial_2 G(g_t^j(0), t)}{(g^j_t)'(g_t(0))}.
\end{equation}
For the remainder of the section, set $h(t) := \htop(g_t)$. 

\subsection{Proof of \rthm{HolderUnimodal}} \label{sectHU}

Let $G : I \times [0,1] \to I, \quad G : (x,t) \mapsto g_t(x)$ be a \wg unimodal family as per \rdef{WellG}.
The unique critical point of each $g_t$ is at $0$. Let us denote $g:=g_0$. The Lyapunov exponent of its critical value 
$$v:=g(0)$$
exists and is denoted $\lambda_0$. Let $b_0 := \exp(h(0)).$

    To prove \rthm{HolderUnimodal}, the first step is to find an increasing sequence of times $(k_n)_n$ and decreasing parameter intervals $\om_n\ni 0$ for which $\xi_{k_n}$ maps $\om_n$ to the large scale in phase space with bounded distortion. 
    
    Next, there are topologically defined points (safe elements for tent maps and their preimages under the (semi-)conjugacy) contained in $\xi_{k_n}(\om_n)$. 
    One can catch these topologically-defined points, that is, there is a parameter $t_n \in \om_n$ which gets mapped by $\xi_{k_n}$ onto such a point. 
    
    The logarithmic size of $t_n$ is roughly $-k_n\la_0$. Doing the same thing for tent maps, for the corresponding tent parameter $b_n = \exp(h(t_n))$, the logarithmic size of $|b_n-b_0|$ is roughly $-k_n \log b_0$, and $|b_n - b_0|$ is comparable with $|\log b_n - \log b_0|$, the difference in the entropies. One obtains an estimate for the entropy at each $t_n$, which gives the result. 
    
    The details constitute the remainder of this subsection. 

\begin{lem}[{\cite{Tsu}}]
\label{factLS}
Let  $\delta,\eps >0$. There exist $r_0, m_0>0$, a strictly increasing sequence $(k_n)_{n\geq 0}$ of positive integers and a decreasing sequence of intervals $\om_n = [0, s_n)$ such that, for all $n\geq 0$, 
$$\frac{k_{n+1}}{k_n} \leq 1+\delta,$$
$$ |\xi_{k_n}(\om_n)| > r_0,$$
\begin{equation}
\label{equBD}
\Dist(\xi_{j}, \om_n) \leq 1 \quad \mbox{for $m_0 \leq j \leq k_n$},
\end{equation}
and,  for all $t \in \om_n$, 
\begin{equation}
\label{equCE}
\lambda_0-\eps \leq \frac{1}{k_n} \log |\xi'_{k_n}(t)| \leq \lambda_0+\eps.
\end{equation}
\end{lem}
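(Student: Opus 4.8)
The plan is to translate the statement, which concerns the one-parameter maps $\xi_n$, into an assertion about the forward orbit of $v = g_0(0)$ in phase space, and then to run the pull-back and distortion construction in the style of Benedicks--Carleson and Tsujii \cite{Tsu}. The first ingredient is the \emph{parameter--phase dictionary}. By \eqref{equQn} at $t=0$, the ratio $\xi_n'(0)/(g_0^n)'(v)$ is a partial sum of $\sum_{j\ge0}\partial_2 G(g_0^j(0),0)/(g_0^j)'(v)$; since $\lambda_0>0$ the denominators grow exponentially while the numerators stay bounded ($G$ is $\cC^2$ on a compact set), so the series converges to the transversality sum $\Sigma\ne0$, whence $\xi_n'(0)=(g_0^n)'(v)\,(\Sigma+o(1))$ and $\tfrac1n\log|\xi_n'(0)|\to\lambda_0$. (In particular, $\lambda_0$ being a finite limit forces $(g_0^n)'(v)\ne0$ for every $n$, so $g_0$ is Collet--Eckmann and its postcritical orbit avoids $0$.) Expanding \eqref{equQn} for general $t$ and using continuity of $G$ and its first two derivatives on a compact set, I would extract a startup time $m_0$ and a neighbourhood $[0,s_*)$ of $0$ on which, for $n\ge m_0$, $\xi_n'$ does not vanish and $\log|\xi_n'(t)|=\log|(g_t^n)'(g_t(0))|+O(1)$ uniformly. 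Hence, for any $\omega\subset[0,s_*)$ and $n\ge m_0$, $\Dist(\xi_n,\omega)$ agrees up to $O(1)$ with the distortion of $t\mapsto\prod_{i<n}g_t'(\xi_i(t))$, and so is $\lesssim\sum_{i<n}\Dist\big(t\mapsto g_0'(\xi_i(t)),\omega\big)$, modulo corrections that will be negligible for the (very small) intervals $\omega$ used below.

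Next I would build the intervals. Fix a small $r_0>0$ (pinned down last) and an interval $J\ni v$ with $|J|\asymp r_0$; for a target time $k$ let $\omega(k)=[0,s_k)$ be the component of $\xi_k^{-1}(J)\cap[0,s_*)$ with $0$ on its boundary, so $|\xi_k(\omega(k))|\asymp r_0$ and, by the dictionary, $|\xi_i(\omega(k))|\lesssim r_0\,|(g_0^i)'(v)|/|(g_0^k)'(v)|$ for $i\le k$ --- thus these sizes grow roughly geometrically (ratio $\asymp e^{\lambda_0}$) up to $\asymp r_0$ at time $k$. Whenever $\xi_i(\omega(k))$ stays at distance $\ge\delta_1$ from the critical point (for a threshold $\delta_1$ fixed in the next step), the $\cC^2$-smoothness of $g_0$ makes $\log|g_0'|$ Lipschitz there with constant $\lesssim1/\delta_1$, so that return contributes $\lesssim|\xi_i(\omega(k))|/\delta_1$ to the distortion; summing the geometric series, all such ``free'' returns contribute $\lesssim r_0/\delta_1$, which is $\le1$ once $r_0$ is small relative to $\delta_1$. (For $\xi_k$ to be monotone on $\omega(k)$, which underpins all of this, one needs $0\notin\xi_i(\omega(k))$ for $i<k$, i.e.\ $|\xi_i(\omega(k))|\ll|g_0^{i+1}(0)|$ throughout.)

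The remaining, and genuinely delicate, contributions come from \emph{deep returns}: times $i$ with $\xi_i(\omega(k))$ within $\delta_1$ of $0$. There $\log|g_0'|$ is unbounded, so one uses the quadratic form of $g_0$ at $0$ together with a bound-period argument, after which the total distortion and derivative deficit produced by all deep returns up to time $k$ is, up to an $o(k)$ error, controlled by $\sum_{j\le k,\ |g_0^j(0)|\le\delta_1}\log|g_0'(g_0^j(0))|$. This is exactly the quantity tamed by Tsujii's weak regularity condition \eqref{equWRtake2}: given the $\delta,\eps$ of the statement, I would first invoke \eqref{equWRtake2} to fix $\delta_1>0$ so small that this sum is $o(k)$; that one choice simultaneously keeps the total distortion $\le1$, preserves $|\xi_i(\omega(k))|\ll|g_0^{i+1}(0)|$ along the construction, and --- together with $\tfrac1k\log|(g_0^k)'(v)|\to\lambda_0$ --- forces $|\xi_k(\omega(k))|$ to reach scale $r_0$ rather than being strangled by recurrence. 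Call such a $k$ \emph{admissible}. One then checks, again using \eqref{equWRtake2} and the existence of $\lambda_0$, that the admissible times cannot be too sparse --- consecutive admissible $k<k'$ satisfy $k'/k\le1+\delta$ once $\delta_1$ was chosen small enough, since a longer gap would force inside that window a derivative deficit incompatible with \eqref{equWRtake2} and with $\lim_{n\to\infty}\tfrac1n\log|(g_0^n)'(v)|=\lambda_0$. I would then let $(k_n)_{n\ge0}$ enumerate the admissible times beyond some large $k_0$ and set $\omega_n:=\omega(k_n)$. I expect this step --- the bound-period analysis of deep returns and the non-sparseness of admissible times --- to be the main obstacle; the rest is bookkeeping.

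Finally I would close the loop. For $m_0\le j\le k_n$, $\Dist(\xi_j,\omega_n)$ is dominated by the partial sum of $\Dist(\xi_{k_n},\omega_n)$ over the first $j$ factors, which is $\le1$, giving \eqref{equBD}; and $|\xi_{k_n}(\omega_n)|\asymp r_0$ can be made $>r_0$ by a harmless adjustment of $|J|$. Estimate \eqref{equCE} is then immediate: for $t\in\omega_n$, $\big|\log|\xi_{k_n}'(t)|-\log|\xi_{k_n}'(0)|\big|\le\Dist(\xi_{k_n},\omega_n)\le1$, while $\tfrac1{k_n}\log|\xi_{k_n}'(0)|\to\lambda_0$ by the dictionary, so $\tfrac1{k_n}\log|\xi_{k_n}'(t)|=\lambda_0+o(1)$ lies in $[\lambda_0-\eps,\lambda_0+\eps]$ provided $k_0$ was chosen large. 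The whole argument is essentially that of Tsujii \cite{Tsu}, whose lemma one could alternatively invoke directly after verifying that the hypotheses on $g_0$ match.
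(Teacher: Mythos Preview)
Your proposal is a broadly correct high-level reconstruction of the Benedicks--Carleson--Tsujii machinery, but the paper takes a very different and much shorter route: it simply invokes two specific lemmas from Tsujii's paper as black boxes. Concretely, the paper first notes that existence of $\lambda_0$ implies Tsujii's condition~(W), then applies Tsujii's Lemma~4.5 to produce, for each large $L$, a time $m=m(L)$ with $L\le\log|(g^m)'(v)|\le L(1+\zeta)$ and $|(g^m)'(v)|\cdot a^+(m)>\rho$ for a fixed $\rho>0$ and Tsujii's scale function $a^+(\cdot)$; combining this with~\eqref{eq:lyapeps} yields the sequence $(k_n)$ with $k_{n+1}/k_n\le1+\delta$. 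Then Tsujii's Lemma~5.2 (whose hypothesis~(5.1) is the Collet--Eckmann condition) delivers directly the distortion bound $\Dist(\xi_j,\omega_n)\le1$, the comparison $|\xi_{k_n}'(0)|\asymp|(g^{k_n})'(v)|$, and an interval $\omega_n$ of length $\gtrsim a^+(k_n)$, whence $|\xi_{k_n}(\omega_n)|\gtrsim\rho$.

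By contrast, you attempt to rebuild this machinery from scratch. Your outline is sound and you correctly flag the bound-period analysis and the non-sparseness of admissible times as the crux. There are, however, soft spots in the sketch as written: defining $\omega(k)$ as the pullback of a fixed $J$ and then asserting $|\xi_i(\omega(k))|\lesssim r_0|(g_0^i)'(v)|/|(g_0^k)'(v)|$ presupposes the bounded distortion you are trying to establish, so a genuine inductive bootstrap is needed; and the argument that consecutive admissible times satisfy $k'/k\le1+\delta$ is left essentially unargued. These are exactly the places where Tsujii's careful construction (via his $a^+(\cdot)$ and the inductive control of returns) does the real work. Your final sentence --- that one could alternatively invoke Tsujii directly after checking the hypotheses --- is precisely what the paper does, and is the efficient choice here.
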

\begin{proof}
    This was essentially shown by Tsujii \cite{Tsu}. Since $\lambda_0$ exists,  for any $\kappa>0$, for all large $m$, 
    \begin{equation}\label{eq:lyapeps}
        \lambda_0 -\kappa \leq \frac1m \log |(g^m)'(v)| \leq \lambda_0 + \kappa.
    \end{equation}
This implies 
$$\liminf_{n \to \infty} \frac 1n \log |g'(g^n(v))|=0,$$
which is condition (W) in \cite{Tsu}.    
    We apply \cite[Lemma~4.5]{Tsu} [our $g_t$ is Tsujii's $f_t$]. Let $\zeta >0$ be small. Then  there exists $\rho >0$ and a number $a^+(m)$ [the extra variables in Tsujii's $a^+(\cdot,\cdot;\cdot)$ are constant, for our purposes]  such that for each sufficiently large $L$, there exists $m = m(L)$ with 
    \begin{equation}\label{eq:zetaeps}
        L
        \leq  \log |(g^m)'(v)| \leq L(1+\zeta)
    \end{equation}
    and 
    \begin{equation}\label{eq:rhoeps}
        |(g^m)'(v)| \cdot a^+(m) > \rho.
    \end{equation}
    From~\eqref{eq:lyapeps} and~\eqref{eq:zetaeps} with $\kappa,\zeta$ sufficiently small, we obtain a strictly increasing sequence $(k_n)$ with $k_{n+1}/k_n \leq 1+\delta$ for which~\eqref{eq:rhoeps} holds for each $m = k_n$.  

    Now we apply~\cite[Lemma~5.2]{Tsu}. Inequality~(5.1) there is verified since $g$ is Collet-Eckmann.
    We obtain, for some constant $l_1>1$, for each $n$,
    $$
    l_1^{-1} < \frac{|\xi'_{k_n}(0)|}{|(g^{k_n})'(v)|} < l_1$$
    and 
    $$
    \Dist(\xi_{j},\om_n) \leq 1,$$
    for $m_0 \leq j \leq k_n$ (for some fixed $m_0$), 
on an interval $\om_n$ of length at least $l_1^{-1}\cdot  a^+(k_n)$. Thus 
    $$|\xi_{k_n}(\om_n)| > l_1^{-1}\cdot  a^+(k_n) \cdot l_1^{-1} |(g^{k_n})'(v)|.$$
    The result follows, taking $r_0 := l_1^{-2}\rho$. 
\end{proof}

\begin{lem}
\label{lemNCZ}
In the setting of Lemma~\ref{factLS}, for $\eps,\de>0$ small enough, for $n$ large enough and all $m<k_n$, the image $\xi_m(\om_n)$ does not contain the critical point $0$.
\end{lem}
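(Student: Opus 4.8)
The plan is to argue by contradiction: if $0\in\xi_m(\om_n)$ for some $m<k_n$, then the point $\xi_m(0)=g^{m+1}(0)$ --- which lies in $\xi_m(\om_n)$ alongside $0$ --- would be forced to be extraordinarily close to $0$, far closer than the Collet--Eckmann property of $g:=g_0$ permits. First I would record the ingredients that come for free. Since $\lambda_0=\lim_n\tfrac1n\log|(g^n)'(v)|$ exists and is positive (with $v=g(0)$), $g$ is Collet--Eckmann; hence the critical orbit never meets $0$, $\sum_j|(g^j)'(v)|^{-1}<\infty$, and, writing $|(g^j)'(v)|=|g'(g^j(0))|\cdot|(g^{j-1})'(v)|$ and using $|g'(x)|\mc|x|$ for $x$ near $0$ (non-degeneracy of the critical point), there are $c>0$, $\mu\in(0,1)$ with $|g^j(0)|\ge c\mu^j$ for all $j\ge1$. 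By~\eqref{equQn} at $t=0$, $\xi_j'(0)=(g^j)'(v)\,Q_j(0)$, where $Q_j(0)$ is bounded in modulus (by summability) and, by the transversality hypothesis, converges to some $Q_\infty(0)\ne0$; so $|\xi_m'(0)|\lesssim|(g^m)'(v)|$ for all $m$ and $|\xi_{k_n}'(0)|\mc|(g^{k_n})'(v)|$ for $n$ large. Finally, existence of $\lambda_0$ gives $|(g^m)'(v)|\le C_\eps\,e^{m(\lambda_0+\eps)}$ for all $m$ and $|(g^{k_n})'(v)|\ge e^{k_n(\lambda_0-\eps)}$ for $n$ large. (Note that only the Collet--Eckmann property, not the full weak-regularity condition~\eqref{equWRtake2}, is used here.)

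Next I would dispose of small $m$: there are finitely many $m<m_0$, and for each, $\xi_m(0)=g^{m+1}(0)\ne0$ while $\xi_m$ is continuous and $|\om_n|\to0$, so $0\notin\xi_m(\om_n)$ once $n$ is large. So assume $m_0\le m<k_n$ and $0\in\xi_m(\om_n)$, fix $t^*\in\om_n$ with $\xi_m(t^*)=0$, and aim for a contradiction when $\eps$ is small and $n$ is large. By the bounded-distortion estimate~\eqref{equBD}, $|\xi_j(\om_n)|\mc|\xi_j'(0)|\,|\om_n|$ for $m_0\le j\le k_n$. With $j=k_n$, combined with $|\xi_{k_n}(\om_n)|>r_0$ and $\xi_{k_n}(\om_n)\subset I$, this gives $|\om_n|\mc r_0/|\xi_{k_n}'(0)|$, hence $|\om_n|\lesssim e^{-k_n(\lambda_0-\eps)}$. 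With $j=m$, combined with $|g^{m+1}(0)|=|\xi_m(0)-\xi_m(t^*)|\le|\xi_m(\om_n)|$, it gives $|g^{m+1}(0)|\lesssim|\xi_m'(0)|\,|\om_n|\lesssim e^{m(\lambda_0+\eps)}\,e^{-k_n(\lambda_0-\eps)}$; call this inequality $(\star)$.

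The inequality $(\star)$ is informative only when $k_n-m$ is a definite fraction of $k_n$. To handle $m$ close to $k_n$, I would instead differentiate the identity $\xi_{k_n}(t)=g_t^{k_n-m}(\xi_m(t))$ at $t=t^*$. Since $g_{t^*}$ has its critical point exactly at $0=\xi_m(t^*)$, the factor $(g_{t^*}^{k_n-m})'(0)$ vanishes and the chain rule collapses to
$$\xi_{k_n}'(t^*)=\left.\frac{\partial}{\partial t}\,g_t^{k_n-m}(0)\right|_{t=t^*}=\xi_{k_n-m-1}'(t^*),$$
the last equality because $g_t^{k_n-m}(0)=\xi_{k_n-m-1}(t)$ as functions of $t$. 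But $t^*\in\om_n$, so~\eqref{equCE} forces $|\xi_{k_n}'(t^*)|\ge e^{k_n(\lambda_0-\eps)}$. If $k_n-m-1<m_0$ this exceeds $\max_{j<m_0}\sup_{[0,1]}|\xi_j'|<\infty$ for $n$ large, a contradiction; otherwise $m_0\le k_n-m-1\le k_n$, so~\eqref{equBD} and the preliminaries give $|\xi_{k_n-m-1}'(t^*)|\lesssim e^{(k_n-m-1)(\lambda_0+\eps)}$, and comparing exponents (for $n$ large) forces $m\lesssim\eps k_n/\lambda_0$.

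Combining the two: $0\in\xi_m(\om_n)$ with $m_0\le m<k_n$ forces $m\lesssim\eps k_n/\lambda_0$, and then $(\star)$ yields $|g^{m+1}(0)|\lesssim e^{-k_n(\lambda_0-O(\eps))}$, whereas the Collet--Eckmann bound yields $|g^{m+1}(0)|\ge c\mu^m\ge c\,e^{-O(\eps)k_n\log(1/\mu)/\lambda_0}$. As $\lambda_0$ and $\mu$ depend only on $g$, choosing $\eps$ small enough (no condition on $\de$ being needed) makes these two bounds contradict each other for all large $n$. I expect the genuine obstacle to be precisely the regime where $m$ is close to $k_n$, where $(\star)$ is vacuous and one really needs the identity $\xi_{k_n}'(t^*)=\xi_{k_n-m-1}'(t^*)$: a passage of the critical curve through the critical point at the intermediate time $m$ forces $\xi_{k_n}$, at that parameter, to inherit only the far smaller derivative of a much earlier iterate, in violation of the uniform expansion~\eqref{equCE}. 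The remaining work --- keeping the small parameter $\eps$ (and the implicit loss in $|(g^j)'(v)|\approx e^{j\lambda_0}$) consistent so that the final inequality is strict --- is routine.
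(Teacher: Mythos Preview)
Your argument is correct, and it is a genuinely different route from the paper's. Both proofs exploit the same phenomenon --- passing through the critical point resets the parameter derivative --- but at different times. The paper applies the reset one step later: from the recursion $\xi'_{m+1}(t)=\xi'_m(t)\,g_t'(\xi_m(t))+\partial_2G(\xi_m(t),t)$ one sees that $\xi_m(t^*)=0$ forces $|\xi'_{m+1}(t^*)|\le\sup|\partial_2G|=:C$. To derive a contradiction, the paper then shows $|\xi'_j(t)|\ge e^{j\lambda_0/2-1}>C$ for \emph{every} $j$ in the range $[M,k_n]$ and every $t\in\omega_n$; this requires interpolating the lower bound on $|\xi'_j(0)|$ between consecutive $k_n$'s, and it is precisely here that the hypothesis $k_{n+1}/k_n\le1+\delta$ with $\delta$ small is used. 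You instead propagate the reset all the way to time $k_n$ via the identity $\xi'_{k_n}(t^*)=\xi'_{k_n-m-1}(t^*)$, where the (CE) estimate of Lemma~\ref{factLS} applies directly; comparing exponents forces $m\lesssim\eps k_n/\lambda_0$, and then the Collet--Eckmann lower bound $|g^{m+1}(0)|\ge c\mu^{m+1}$ clashes with the upper bound~$(\star)$.

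What each approach buys: the paper's proof is shorter and uses only the one-step recursion, but genuinely needs $\delta$ small for the interpolation. Your proof is longer and uses an additional ingredient (the exponential lower bound on $|g^j(0)|$, which you correctly derive from the existence of $\lambda_0$), but never invokes the gap condition $k_{n+1}/k_n\le1+\delta$; as you note, no smallness of $\delta$ is required. Your identity $\xi'_{k_n}(t^*)=\xi'_{k_n-m-1}(t^*)$ is in fact the paper's observation iterated: taking $k_n=m+1$ in your formula recovers $\xi'_{m+1}(t^*)=\xi'_0(t^*)=\partial_2G(0,t^*)$.
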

\begin{proof}
Since the $\cC^1$-norm of $G$ is bounded and 
$$\xi'_{m+1}(t) = \xi'_m(t) g_t'(\xi_m(t)) + \pa_2 G(\xi_m(t), t),$$ 
$|\xi'_{m}|$ cannot grow too fast. 
Moreover,
if $\xi_m(t) = 0$, then 
$$|\xi'_{m+1}(t)| = |\pa_2 G(0, t)| < C,$$
 for some $C>0$.

 For large $n$, 
    $k_{n+1}/k_n \leq 1+\de$ and  
    $$\la_0 -\eps \leq \frac{1}{k_{n+1}}\log |\xi'_{k_{n+1}}(0)|. $$
    Taking $\eps, \de>0$ sufficiently small, we deduce that $$\frac{\la_0}{2} \leq \frac1m \log |\xi'_m(0)|$$ for 
    $m$ between $k_n$ and $k_{n+1}$, and thus for all large $m$.
    From this and bounded distortion \eqref{equBD}, there exists $M\geq 1$ for which, for $n\geq 1$ and $M\leq m\leq k_n$,   
    $$|\xi'_{m}(t)| \geq e^{\frac{m\la_0}{2} -1} > C$$
    for all $t \in \om_n.$
    Thus $0 \notin \xi_{m-1}(\om_n)$ for such $m,n$.
Now we deal with $m\leq M$. 
As $|\om_n| \to 0$ and the critical orbit of $g_0$ is not periodic,  there exists $N$ such that if $m\leq M$ and $n\geq N$, $0 \notin \xi_m(\om_n)$. 
\end{proof}

\noindent \textbf{Safe elements.}
Given a tent map $T$, we call an element $x$ of $T^{-N}(0)$ \emph{safe} if $x \notin \Orb(0)$. 
Considering the minimal $n$ with $T^n(x) = 0$,
safe elements admit continuations  under small perturbations of $T$.
Denote by  
\begin{equation}
\label{equDefSN}
\SS_N:=\bigcup_{j= 1 }^N T_{b_0}^{-j}(0) \setminus\Orb(0)
\end{equation}
   the collection of safe elements of $T_{b_0}$ for times up to $N$.
    Denote by $\SS = \SS_\infty$ the collection of all safe elements. This set is dense in $\hat I_{b_0}$. 
Let $\phi$ be the semi-conjugacy of $g$ to $T_{b_0}$
\begin{equation}
\label{equDefPhi}
	\phi \circ g = T_{b_0} \circ \phi.
\end{equation}
For each safe element $x$ with corresponding minimal $n$, there may be an interval mapped by $\phi$ to $x$. However, there is a unique point of $g^{-n}(0)$ mapped by $\phi$ to $x$. 
If $\phi$ is a conjugacy, then $\phi$ is bijective on $\phi^{-1}(\SS)$ and the points in $\phi^{-1}(\SS)$ admit continuations (as points of $g_t^{-n}(0)$ as one perturbs $g$).

    \begin{lem} \label{lemSafe3}
        Suppose $h$ is not locally constant at $t=0$. 
        Given $r_0>0$, there exist $\eps_0,N, N_1>0$ for which the following holds. 
        If $0<\eps<\eps_0$ and $n\geq N_1$ are such that $\xi_n$ is diffeomorphic on $[0,\eps]$ and $$|\xi_n([0,\eps])|\geq r_0.$$ 
          
        Then $$\# \left(\SS_N \cap \phi(\xi_n([0,\eps])) \right) \geq 3.$$
    \end{lem}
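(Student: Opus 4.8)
The plan is to exploit the density of safe elements in $\hat I_{b_0}$ together with the fact that the interval $\phi(\xi_n([0,\eps]))$ has a definite minimal length. First I would observe that $\phi$ is (semi-)conjugacy of $g$ to $T_{b_0}$ and that $\phi$ is monotone and continuous; since $g = g_0$ is S-unimodal, $\phi$ collapses at most countably many (pre-)restrictive intervals and is otherwise essentially injective, so the image $\phi(\xi_n([0,\eps]))$ contains a nondegenerate closed interval $J$ whose length is bounded below in terms of $r_0$ --- the only possible obstruction to this being that $\xi_n([0,\eps])$ lies inside a restrictive interval, which cannot happen for all large $n$ because $h$ is not locally constant at $0$ (this is where that hypothesis enters). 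More carefully, if $h$ were locally constant one could not guarantee that the image under $\phi$ does not collapse; its non-constancy and the continuity of entropy (from \cite{MisJumps}, as recalled in \rsec{Tent}) guarantee that arbitrarily small parameter intervals around $0$ still realise a range of kneading data, so $\phi$ does not crush $\xi_n(\om_n)$ to a point for $n$ large.

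Next I would quantify the length of $J = \phi(\xi_n([0,\eps]))$. Since $\phi$ semi-conjugates to the tent map of slope $b_0 = e^{h(0)} > 1$ and $\phi$ is Hölder (indeed the semi-conjugacy between a fixed unimodal map and its tent model has bounded modulus of continuity depending only on $b_0$), there is a constant $c_0 = c_0(b_0, r_0) > 0$ such that $|J| \geq c_0$ whenever $|\xi_n([0,\eps])| \geq r_0$ and $\xi_n([0,\eps])$ is not contained in a small restrictive interval. Then, using that $\SS_N = \bigcup_{j=1}^N T_{b_0}^{-j}(0) \setminus \Orb(0)$ becomes $\eta$-dense in $\hat I_{b_0}$ as $N \to \infty$ (each successive preimage subdivides the dynamical intervals, whose lengths shrink geometrically since $|T_{b_0}'| = b_0 > 1$), I would pick $N$ so large that $\SS_N$ is $(c_0/4)$-dense. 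Any closed interval of length at least $c_0$ then contains at least $3$ points of $\SS_N$: a $(c_0/4)$-net meeting a length-$c_0$ interval has at least $4$ points in it, and removing possibly one coincidence with $\Orb(0)$ still leaves $3$. This fixes $N$; one then sets $N_1$ and $\eps_0$ large/small enough that the diffeomorphism and length hypotheses can hold and that the "not in a restrictive interval" conclusion is valid for all $n \geq N_1$.

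The subtle points, and the main obstacle, are (i) making precise the claim that $\xi_n([0,\eps])$ is not swallowed by a restrictive interval of $g$ --- this needs the hypothesis that $h$ is not locally constant at $0$, combined with the fact established in \rlem{factLS} that $\xi_{k_n}$ maps $\om_n$ across a fixed scale $r_0$ with bounded distortion, so the images cannot all sit inside the same shrinking restrictive interval, because restrictive intervals of a fixed S-unimodal map are nested and of definite combinatorial type; (ii) the uniformity of the density rate of $\SS_N$, which follows from the uniform expansion of $T_{b_0}$ but must be stated for the fixed parameter $b_0$ only, which is fine since $b_0$ is fixed throughout. I would therefore organise the proof as: (a) reduce to showing $|\phi(\xi_n([0,\eps]))| \geq c_0$ for some fixed $c_0 > 0$; (b) prove this using monotonicity of $\phi$, Hölder continuity of $\phi$, and non-local-constancy of $h$ to rule out collapse; (c) choose $N$ from the density of $\SS_N$ and conclude by a counting argument. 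The genuinely delicate step is (b), the non-collapse under $\phi$.
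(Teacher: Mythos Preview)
Your plan works cleanly in the case where $\phi$ is a genuine conjugacy: then $\phi^{-1}$ is continuous, so an interval of length $\geq r_0$ in phase space maps to an interval of definite length in tent space, and density of $\SS_N$ finishes. The paper dispatches this case in one line.

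The gap is in step~(b), precisely the case where $\phi$ is only a \emph{semi}-conjugacy. H\"older continuity of $\phi$ goes the wrong way: it gives $|\phi(x)-\phi(y)|\le C|x-y|^\alpha$, an \emph{upper} bound, and cannot yield $|\phi(\xi_n([0,\eps]))|\ge c_0$. The real obstruction is structural. When $T_{b_0}$ is periodic, $\phi$ collapses each interval $[p_k,q_k]$ in the restrictive cycle (and its preimages) to a point. Since $g_0$ is \wg and $h$ is not locally constant, $g_0$ must be pre-Chebyshev, so the critical orbit is eventually periodic and $\xi_n(0)=g^{n+1}(0)$ sits exactly on a boundary point $p_k$ for large $n$. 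A priori, $\xi_n([0,\eps])$ could move \emph{into} $(p_k,q_k)$, and then $\phi$ would crush it to a single point, regardless of its length being $\geq r_0$. Your ``not contained in a restrictive interval'' clause does not rule this out, and no modulus-of-continuity argument for $\phi$ can.

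The paper's argument avoids estimating $|\phi(\xi_n([0,\eps]))|$ altogether. It compares speeds: the continuations $p_k(t)$ have bounded derivative in $t$, while $|\xi_n'(0)|\to\infty$, so for large $n$ the curve $t\mapsto\xi_n(t)$ outruns $p_k(t)$. If it moved into $(p_k(t),q_k(t))$ for small $t>0$, then $g_t$ would retain the restrictive interval and $h$ would be locally constant at $0$ --- contradiction. Hence $\xi_n([0,\eps))$ is disjoint from every open $(p_k,q_k)$; since $\phi^{-1}(\SS)$ accumulates on each $p_k$ from the outside, three safe elements are caught. This direction-of-escape argument, driven by the non-local-constancy hypothesis, is the missing idea.
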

\begin{proof}
    If $\phi$ is a conjugacy, then $T$ is not periodic, all preimages of $0$ are safe and $\phi^{-1}(\SS)$ is dense in $I$. In this case, the lemma is trivial.

    Henceforth suppose $\phi$ is not a conjugacy, which implies $g$ is renormalisable of some type other than Feigenbaum. Since the entropy is not locally constant, there are $t$ arbitrarily close to $0$ for which $g_t$ is not renormalisable of the same type as $g$. Since $G$ is \wg, $g$ is not parabolic.
    It follows that the final renormalisation of $g$ is conjugate to the Chebyshev map and any previous renormalisation is of Feigenbaum type. 
    
    In particular, high iterates of $0$ periodically lie on the boundary of the smallest restrictive interval $(p,q)$, say at $p$. Set $p_0=p$, $q_0=q$, $p_k = g^k(p)=g^k(q)$ and $q_k = g^k(g^m(0))$, for $k = 1, \ldots, m-1$, where $m\geq 3$ is minimal with $g^m([p,q])\subset [p,q]$. 
    For each $k$, the restriction 
    $$g^m : [p_k, q_k] \to [p_k, q_k]$$ is unimodal. 
    Each interval $[p_k, q_k]$ is the preimage of a point under $\phi$. 
    Therefore the points $p_0, \ldots, p_{m-1}$ are all in the accumulation set of $\phi^{-1}(\SS)$. 
    Remark that 
    $$\phi^{-1}(\SS) \subset I \setminus \bigcup_{k=0}^{m-1}[p_k, q_k].$$
    Using continuations of points in $\phi^{-1}(\SS)$ for $\eps > 0$ arbitrarily small, we only need to show that the interval $\xi_n([0,\eps))$ is disjoint from $\cup_{k=0}^{m-1}(p_k, q_k)$. We may assume that $r_0$ is smaller than the distance between any two intervals $(p_k,q_k)$. Those intervals are not adjacent, as $g$ is renormalisable of some type other than Feigenbaum.

    The points $p_k$, $q_k$ have continuations  $p_k(t), q_k(t)$ with 
     $$g_t^m(p_k(t))  = g_t^m(q_k(t)) = p_k(t).$$
    In this preperiodic transversal setting, $|\xi'_n(0)| \to \infty$ as $n \to \infty$, as it is comparable to $|(g^n)'(v)|$, by equation \eqref{equQn}. 
    Hence
    $$|p_k'(0)|, |q_k'(0)| < |\xi_n'(0)|$$
    for all $k$ and all large $n$. 

    Let $n$ be large and suppose $\xi_n(0) = p_k$. Relative to $p_k(t)$ at $t=0$, $\xi_n(t)$ can go in two directions. 
    In one direction, $\xi_n(t)$ goes inside the interval $(p_k(t), q_k(t))$. If this happens, 
    $$g_t^m([p_k(t),q_k(t)]) \subset [p_k(t), q_k(t)]$$ 
    for all sufficiently small $t \geq 0$, and $g_t$ has a restrictive interval $(p_0(t),q_0(t))$.  This we exclude, for otherwise the entropy would be locally constant at $t=0$, a contradiction. 
    
    Consequently, $\xi_n(t)$ starts off going in the other direction and, for all sufficiently small positive $\eps'$, $\xi_n([0,\eps'))$ is an interval bordering but disjoint from $(p_k, q_k)$. 
    Thus, if $\xi_n$ is diffeomorphic on  $[0,\eps)$,
    $$\xi_n([0,\eps))\cap (p_k(0),q_k(0)) = \emptyset,$$
    which completes the proof.
\end{proof}

Let us recall that we work with a \wg unimodal family $G$ and that $\SS_N$ and $\phi$ are defined by \eqref{equDefSN} and \eqref{equDefPhi}.

\begin{prop} \label{propContent}
    Given $K_0>0$ and $N\geq 1$, there exist  $K \geq 1$ and $\eps_0, \gamma >0$ such that
    the following holds. 
        If $\om = [0,\eps)$, for some $\eps \in (0,\eps_0)$, $n \geq 1$ is large, $\Dist(\xi_n, \om)  \leq K_0$, $0 \notin \cup_{j=1}^{n-1} \xi_j(\om)$ and $\phi(\xi_n(\om))$ contains (at least) three safe elements of $\SS_N$, then there exists $t \in \om$ with 
    $$\gamma|\om| <  t < |\om|$$
    and 
    \begin{equation}\label{equContent}
        K^{-1} t^{\frac{h(0)}{\lambda_n}}
        \leq
        |h(t) - h(0)| 
        \leq
        K t^{\frac{h(0)}{\lambda_n}},
    \end{equation}
    where
    $$\lambda_n := \frac{1}{n} \log |\xi_n'(0)|.$$
    
    \end{prop}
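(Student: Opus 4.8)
The plan is to transport the bounded-distortion control on $\xi_n$ over $\om = [0,\eps)$ into phase space, catch a topologically-defined point (a safe element), then compare the resulting parameter displacement with the entropy displacement via the tent-map family. First I would use the semiconjugacy $\phi$ and the hypothesis that $\phi(\xi_n(\om))$ contains three safe elements $x_{(1)}, x_{(2)}, x_{(3)} \in \SS_N$, ordered so that $x_{(2)}$ is strictly between the other two. Since $0 \notin \cup_{j=1}^{n-1}\xi_j(\om)$, the map $g_t$ agrees combinatorially with $g_0$ along the first $n$ steps of the critical orbit for $t \in \om$, so the safe elements have smooth continuations $x_{(i)}(t)$ as points of $g_t^{-N_i}(0)$, and one of them — say the image under $\xi_n$ of the left endpoint corresponds (combinatorially) to a point on one side of $x_{(2)}$ — lies strictly inside $\xi_n(\om)$ at $t=0$ after composing with (a branch of) $\phi$; because $\xi_n$ is a diffeomorphism on $\om$ with $|\xi_n(\om)|$ bounded below and distortion $\leq K_0$, there is a parameter $t_* \in \om$ with $\xi_{n}(t_*)$ mapped by $g_{t_*}^{N_2}$ onto $0$, i.e.\ the critical orbit of $g_{t_*}$ is (pre)periodic with combinatorics matching the tent map $T_{b_*}$, $b_* := \exp(h(t_*))$. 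The bound $\gamma|\om| < t_* < |\om|$ follows from bounded distortion: the preimages $\xi_n^{-1}$ of the three ordered safe elements land at three points of $\om$ separated by a definite fraction of $|\om|$ (using that $r_0$, hence the separation of the safe elements, is fixed), so at least one of them sits in the middle portion $(\gamma|\om|,|\om|)$.

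Next I would estimate $|t_*|$ logarithmically. By bounded distortion and~\eqref{equCE}-type control, $|\xi_n'(t)| \mc |\xi_n'(0)| = e^{n\lambda_n}$ uniformly on $\om$, so catching a point at definite distance $\gtrsim r_0$ inside $\xi_n(\om)$ costs $t_* \mc e^{-n\lambda_n}$; more precisely $-\frac{1}{n}\log t_* \to \lambda_n$ up to $O(1/n)$ error absorbed into $K$. In parallel, I would run the same argument in the tent family: the safe elements of $T_{b_0}$ persist, and the parameter $b_*$ is exactly the tent parameter realizing the same kneading data, so by Lemma~\ref{lemBruMisLem} the displacement $\Xi_{n}$-type derivative is $\mc b_0^n$, giving $|b_* - b_0| \mc b_0^{-n}$, i.e.\ $-\frac{1}{n}\log|b_* - b_0| \to \log b_0 = h(0)$. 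Since $b \mapsto \log b$ is bi-Lipschitz near $b_0$, $|h(t_*) - h(0)| = |\log b_* - \log b_0| \mc |b_* - b_0| \mc b_0^{-n} = e^{-n h(0)}$. Combining $t_* \mc e^{-n\lambda_n}$ with $|h(t_*)-h(0)| \mc e^{-n h(0)}$ yields $|h(t_*)-h(0)| \mc t_*^{h(0)/\lambda_n}$, which is~\eqref{equContent} after renaming $t_*$ as $t$ and absorbing all multiplicative constants (which depend only on $K_0$, $N$, $r_0$, $b_0$, and the distortion/comparability constants from Lemmas~\ref{lemBruMisLem} and~\ref{factLS}) into $K$.

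The main obstacle I anticipate is making the ``catching'' step rigorous and uniform when $\phi$ is merely a semiconjugacy rather than a conjugacy — i.e.\ in the renormalisable (pre-Chebyshev) case treated in Lemma~\ref{lemSafe3}. There, $\phi$ collapses the restrictive intervals $[p_k,q_k]$ to points, so one must verify that the preimage under $\xi_n$ of a safe element is a genuine point (not an interval) and that the continuation $x_{(i)}(t)$ stays a safe element — this is where the hypothesis that $h$ is not locally constant at $0$ is essential, ruling out the scenario in which $\xi_n(t)$ slides into a persisting restrictive interval. A secondary technical point is bookkeeping the $O(1/n)$ additive errors in the exponent $\lambda_n$ versus $h(0)$: since $\lambda_n$ is bounded away from $0$ and $\infty$ (by~\eqref{equCE} and Collet--Eckmann), $t^{h(0)/\lambda_n}$ changes only by a bounded multiplicative factor under an $O(1/n)$ perturbation of the exponent once $t \mc e^{-n\lambda_n}$ is at most $1$, so these errors are harmless — but this should be stated carefully. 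Everything else (the distortion estimates, the persistence of safe elements, the tent-map derivative bounds) is supplied by Lemmas~\ref{factLS}, \ref{lemNCZ}, \ref{lemSafe3} and \ref{lemBruMisLem}.
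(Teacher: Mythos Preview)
Your proposal is essentially correct and follows the same route as the paper: pick the middle safe element, use its continuation $p(t)$ to catch a parameter $t'$ with $\xi_n(t') = p(t')$, obtain $t' \mc |\xi_n'(0)|^{-1}$ from bounded distortion, run the parallel argument in the tent family via \rlem{BruMisLem} to get $|b'-b_0|\mc b_0^{-n}$, and combine. Two small clean-ups: the separation constant should be the minimal gap $\theta$ between elements of $\SS_N$ (and of $\phi^{-1}(\SS_N)$), not $r_0$, since the proposition's hypotheses do not mention $r_0$; and the ``catching'' step is an intermediate-value argument for $\xi_n(t)=p(t)$ (using that the continuation moves less than $\theta$ on $[0,\eps_0)$), not literally $\xi_n^{-1}$ of the safe elements at $t=0$ --- once phrased this way your semiconjugacy worry dissolves, because for each safe element there is a unique point of $g^{-j}(0)$ mapped to it by $\phi$, and that point has a smooth continuation.
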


    \begin{proof}
        Let $\theta>0$ denote half the minimal distance between any pair of elements of $\SS_N$ or of $\phi^{-1}(\SS_N)$. 
        Let $q$ be a middle safe element of $\SS_N$ in $\phi(\xi_n(\om))$ and $p \in g^{-j}(0) \cap \xi_n(\om)$ with $\phi(p)=q$, for some minimal $j \leq N$. 
        Then 
        $$
        \dist(q, \phi(\xi_n(\partial \om))), 
        \dist(p, \xi_n(\partial \om)) \geq 2\theta. 
        $$

        Let $p(t),q(t)$ denote the continuations of $p,q$ (it makes sense to write $q(t)$, since to each $g_t$ there corresponds a unique $T_{\exp(h(t))}$). 
        As $\SS_N$ is finite, we can choose $\eps_0 > |\om|$ such that $p(t)$ and $q(t)$ are always well-defined on $\om$ and 
        such that 
        $$
        \dist(q(\om), \phi(\xi_n(\partial \om))), 
        \dist(p(\om), \xi_n(\partial \om)) > \theta. 
        $$
    Hence
     there exists $t'$ with $\xi_n(t') = p(t')$. For this $t'$, 
    $$\theta <|\xi_n(0)-\xi_n(t')| < |\xi_n(\om)| \leq 4.$$ 
    By bounded distortion, 
    \begin{equation}\label{equGamK0}
    	\frac{\theta}{e^{K_0}|\xi_n'(0)|} < t' < \frac{4e^{K_0}}{|\xi_n'(0)|},
    \end{equation}
    and we can choose 
    $$\ga := \frac{\theta} {4 e^{K_0}}.$$

    Let $b' := \exp(h(t'))$. Now the corresponding semi-conjugacy maps $p(t')$ to 
    $$q(t')  = T_{b'}^n(1),$$
     and $b_0 \ne b'$ since  
    $$\theta < |q(t') - T_{b_0}^{n}(1)| < |\hat I_{b_0}|.$$
We introduced in \eqref{equDefXi} the notation $\Xi_k(b) = T_b^k(1)$, the equivalent of $\xi_k$ for tent maps.
    From equation \eqref{equKnead} and the hypothesis $0 \notin \cup_{j=1}^{n-1} \xi_j(\om)$, we deduce $\Xi_k(b) \neq 0$ for all $k=1,\ldots,n-1$ and all $b\in(b_0, b')$.    
     By \rlem{BruMisLem}, $\Xi_n$ is a diffeomorphism on $(b_0, b')$ and 
    $$
    |\Xi_n'(b)| \mc b_0^n$$
    for all $b \in (b_0, b')$. 
    Meanwhile $\theta < |\Xi_n(b_0) - \Xi_n(b')| \leq  |\hat I_{b_0}|$. Hence,
     $$ |b_0 - b'| \mc b_0^{-n}. $$
    As $|h(0) - h(t')| 
    = |\log b_0 - \log b'| \mc |b_0 - b'|$ 
    and 
    $$
    |\xi_n'(0)|^{\frac{h(0)}{\lambda_n}} =
    \exp(nh(0))=b_0^n,$$
    we deduce, using~\eqref{equGamK0} for the final step, that 
    $$
    |h(0) - h(t')| \mc b_0^{-n} 
    \mc |\xi_n'(0)|^{-\frac{h(0)}{\lambda_n}} 
    \mc
    t'^{\frac{h(0)}{\lambda_n}} ,$$
     noting $e^{K_0}$, $\theta$, $h(0)$ are constants and $\la_n$ converges to $\la_0 > 0$ as $n \to \infty$. 
    \end{proof}

    \begin{proof}[Proof of \rthm{HolderUnimodal}]
        Given a \wg unimodal family $G$,
        we must show that, if $h$ is monotone and not locally constant at $t=0$,  
        $$
        \lim_{\substack{t \to 0}} \frac{\log |h(t) - h(0)|}{\log t} = \frac{h(0)}{\lambda_0}.
        $$
        By monotonicity, it suffices to find sequences of parameters $t_n \searrow 0$ with 
        $$
        |h(t_n) -h(0)| \mc t_n^{\frac{h(0)}{\la_0}}
        $$
        and with 
        $$ \frac{\log t_{n+1}}{\log t_{n}}$$ 
            arbitrarily close to $1$. 

        Let $\delta, \eps > 0$ and let $r_0,m_0, k_n, \om_n$ be given by Lemma~\ref{factLS}. 
        From Lemma~\ref{factLS}, we have $|\xi_{k_n}(\om_n)| > r_0$,  
        $$1 <\frac{k_{n+1}}{k_n} \leq 1+\delta,$$
        $\Dist(\xi_{j}, \om_n) \leq 1$ for $j = m_0, \ldots, k_n$, and, for $t \in \om_n$,
        $$
        \la_0-\eps \leq \frac1{k_n} \log |\xi'_{k_n}(t)| \leq \la_0 +\eps.$$

        By \rlem{Safe3}, there is some fixed $N$ for which, for all large $n$, 
        $\phi(\xi_{k_n}(\om_n))$
        contains at least three safe elements of $\SS_N$. By \rlem{NCZ}, we can apply \rprop{Content} to obtain $K\geq1$, $\gamma >0$ and a sequence of parameters $t_{n}$ for which the following holds: 
            $$
            \gamma |\om_n| \leq t_{n} \leq |\om_n|
            $$
            and
            \begin{equation}\label{equLALA}
            K^{-1}t_{n}^ \frac{h(0)}{\la_0-\eps}
            \leq
            |h(t_n)-h(0)|
            \leq
            K t_n^\frac{h(0)}{\la_0+\eps}.
        \end{equation}

        It remains to show that the $t_n$ do not decrease too fast. 
            By bounded distortion (and  assuming $n$ is large),
            $$
            1-\eps \leq \frac {\log |\xi_{k_n}'(0)|} {-\log t_n}  \leq 1+\eps.
            $$
            We deduce
            \begin{equation}\label{equAiLALA}
                \frac{\log t_{n+1} }{\log t_{n}} \leq (1+\de)\frac{(\la_0+\eps)(1+\eps)}{(\la_0-\eps)(1 - \eps)}. 
        \end{equation}
        Since $\eps, \de>0$ are arbitrary, this completes the proof. 
        \end{proof}

        \subsection{Parabolic parameters} \label{sectPAR}
   \label{sectParab}

   \noindent\textbf{Quadratic maps.} We call a map $f_a$ parabolic if it has a periodic point $p$ (of period $k$, say) with multiplier $1$ or $-1$. 
If the multiplier is $-1$, then $f_a^k$ is orientation-reversing, locally. By Singer's theorem (using negative Schwarzian derivative), $p$ is attracting on at least one side, so it is attracting on both sides. Arbitrarily small neighbourhoods of $p$ are mapped diffeomorphically and compactly inside themselves by $f_a^{2k}$. As one perturbs $f_a$, this topological situation persists and there is an (at least one-sided) attracting periodic orbit of period $k$ or $2k$. Again by Singer, the critical point lies in the basin of attraction. The kneading itinerary of the critical point remains the same and the entropy is locally constant. 
If the multiplier is $1$, the entropy may vary with $a$ (as $a$ increases). In this subsection, we  prove that the entropy function is infinitely flat at parabolic parameters for general unimodal families.

\medskip

\noindent
    \textbf{Tent maps.} Multiplication by $b$ is the same as iteration of the function whose graph is given by $y = bx$. After $n$ iterates starting from $x$, one is at a distance $b^nx$ from $0$. Translating this behaviour, we understand iterates of a piecewise-linear map restricted to one branch. 
    
    Let $T_{b_0}$ be a preperiodic tent map and let $\alpha, k,N$ satisfy  
    $$\alpha := T_{b_0}^{N+k}(0)= T_{b_0}^N(0),$$ 
    with $k \geq 1, N\geq 0$ minimal with this property. Note $\alpha$ is periodic with period $k$. 
    Near $\alpha$, if $N = 0$ the graph of $T_{b_0}^k$ is on one side only of the diagonal; if $N >0$ the graph of $T_{b_0}^{k}$ crosses the diagonal at $\alpha.$ For nearby parameters $b$, if $N>0$ the graph still crosses the diagonal and, in a small neighbourhood of $\alpha$, there is only one branch. 
    If $N=0$, we need the following. 
    \begin{lem}
    Let $T_b$, $b \in (1,2]$ be a tent map. If $T_b^k(0)= 0$ for some $k \geq 1$ then $b^k \geq 2\sqrt{2}$. 
    \end{lem}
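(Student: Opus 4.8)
The plan is to reduce to the minimal period and then split according to whether $b$ lies above or below the Feigenbaum (period‑doubling) threshold $\sqrt2$, handling the range $b>\sqrt2$ by a one‑line estimate and pushing the renormalisable range $b\in(1,\sqrt2]$ down to it. First I would observe that if $T_b^k(0)=0$ for some $k\ge1$ and $k_0\ge1$ is the least such exponent, then $k_0$ divides $k$, so $b^k=(b^{k_0})^{k/k_0}$; since $2\sqrt2>1$ and $k/k_0\ge1$, it suffices to prove the bound for $k=k_0$ minimal. For minimal $k$, recall (as already noted for periodic tent maps in \rsec{Tent}) that $k\ge3$: indeed $T_b(0)=1$ and $T_b^2(0)=1-b$ are nonzero for every $b\in(1,2]$.

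If $b>\sqrt2$ we are done at once, since $b^k\ge b^3>(\sqrt2)^3=2\sqrt2$. So the substance is the case $b\in(1,\sqrt2]$, where $T_b$ is Feigenbaum renormalisable. Here I would iterate the period‑two renormalisation: since $b>1$, $b^{2^m}\to\infty$, so let $m\ge1$ be minimal with $\hat b:=b^{2^m}>\sqrt2$; by minimality $b^{2^{m-1}}\le\sqrt2$, hence $\hat b=(b^{2^{m-1}})^2\le2$, i.e. $\hat b\in(\sqrt2,2]$, and moreover $T_b$ is at least $m$ times Feigenbaum renormalisable. I would then reuse exactly the renormalisation setup from the proof of \rlem{BruMisLem}: let $J\ni0$ be the restrictive interval of the $m$‑th renormalisation, so that $J,T_b(J),\dots,T_b^{2^m-1}(J)$ are pairwise disjoint, $T_b^{2^m}(\overline J)\subseteq\overline J$, and the affine rescaling $A_b$ (multiplication by a nonzero constant, with $A_b(0)=0$) conjugates $T_b^{2^m}|_J$ to $T_{\hat b}$ on $\hat I_{\hat b}$.

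The key bookkeeping step: since $0\in J$ and $T_b^i(0)\in T_b^{\,i\bmod 2^m}(J)$ for every $i\ge0$, the relation $T_b^k(0)=0\in J$ forces $2^m\mid k$. Writing $k=2^m\hat k$, the conjugacy gives $T_{\hat b}^{\hat k}(0)=A_b(T_b^k(0))=A_b(0)=0$, and $\hat k$ is minimal with this property (otherwise $k$ would not be minimal for $T_b$), so $\hat k\ge3$. Applying the case already treated to $\hat b\in(\sqrt2,2]$ yields $\hat b^{\hat k}\ge\hat b^3>2\sqrt2$; since $\hat b^{\hat k}=b^{2^m\hat k}=b^k$, this finishes the argument.

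The only step that is not purely formal is the combinatorial one — that the critical orbit respects the cyclic structure $J,T_b(J),\dots,T_b^{2^m-1}(J)$ of the restrictive intervals, so that $2^m\mid k$ — together with the elementary check that the squarings keep the parameter inside $(1,2]$ until it first leaves $(1,\sqrt2]$; both follow directly from Definition~\ref{defRen} and the renormalisation facts recalled in \rsec{Tent}, so I do not expect a genuine obstacle. It is perhaps worth remarking that $2\sqrt2$ is far from optimal: the minimum of $b^k$ over periodic tent maps is $2+\sqrt5$, attained at period three (and by all maps renormalising onto the period‑three map); the constant $2\sqrt2=(\sqrt2)^3$ is merely the value obtained by placing a period‑three turning point exactly at the renormalisation threshold, which is all that is used later.
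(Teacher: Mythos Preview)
Your proof is correct and follows essentially the same approach as the paper: locate $m$ with $b^{2^m}\in(\sqrt2,2]$, use the period-two renormalisation structure to write $k=2^m p$ with $p\ge3$, and conclude $b^k=(b^{2^m})^p>(\sqrt2)^3=2\sqrt2$. The paper compresses this to two lines by directly taking $m$ with $b\in(2^{2^{-m-1}},2^{2^{-m}}]$ and asserting $k=2^m p$, $p\ge3$, hence $b^k\ge 2^{p/2}$; your reduction to minimal $k$ and explicit justification that $2^m\mid k$ via the cyclic block structure of the restrictive intervals are fine but not strictly needed.
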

    \begin{proof}
    If $b_0 \in (2^{2^{-m-1}}, 2^{2^{-m}}]$, then $T_{b_0}$ is $m$-times renormalisable of period $2$.  It follows that $k = {2^{m}}p$ for some $p \geq 3$ and  $b^k \geq 2^{p/2}$. 
    \end{proof}
    \begin{lem} \label{lemUalpha}
    There exist $C_1, l\geq 1$ and 
    a neighbourhood $U_\alpha$ of $\alpha$ for which, for all $b$ close to $b_0$, the following holds. If $n\geq 1$ and  
    $$ T_{b}^{N+jk}(0) \in U_\alpha$$
    for $j = 1,\ldots,n$, then
    $$
    |\log b_0 - \log b| \leq C_1 b_0^{-nk/l}.$$
\end{lem}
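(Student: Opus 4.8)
\emph{Strategy.} The plan is to localise the dynamics near the periodic point $\alpha$, extract from the hypothesis an inequality of the shape $|F(b)|\le C\,b^{-nk}$ for a suitable real-analytic function $F$ with $F(b_0)=0$, and then convert this into a bound on $|b-b_0|$ using that a non-trivial real-analytic function of one variable has a finite order of vanishing (a one-variable {\L}ojasiewicz inequality). I will choose $U_\alpha$ and the neighbourhood of $b_0$ small enough at each stage.

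\emph{Local dynamics.} Write $\beta_j(b):=T_b^{N+jk}(0)$, so the hypothesis reads $\beta_1,\dots,\beta_n\in U_\alpha$, while $\beta_0\in U_\alpha$ automatically for $b$ close to $b_0$. Suppose first $N>0$. Then the period-$k$ cycle of $\alpha$ avoids the turning point (otherwise minimality of $N$ would force $N=0$), so for $U_\alpha$ small and $b$ near $b_0$ the map $T_b^k$ is affine on $U_\alpha$, with slope of modulus $b^k$ and a transversal continuation $\alpha(b)\in U_\alpha$ of the fixed point $\alpha$. Iterating the resulting linear recursion gives $|\beta_j-\alpha(b)|=b^{jk}|\beta_0-\alpha(b)|$ for as long as the $\beta_i$ stay in $U_\alpha$; hence, with $F(b):=T_b^N(0)-\alpha(b)=\beta_0-\alpha(b)$, the hypothesis yields $|F(b)|\le\mathrm{diam}(U_\alpha)\,b^{-nk}$. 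Suppose instead $N=0$, so $\alpha=0$ and, by the preceding lemma, $b_0^k\ge 2\sqrt2$. Here $T_b^k$ restricted to $U_0$ is a ``little tent map'' $x\mapsto c(b)-\sigma b^k|x|$, $\sigma\in\{\pm1\}$ (valid because the orbit $1,T_{b_0}(1),\dots,T_{b_0}^{k-2}(1)$ misses $0$), with vertex value $c(b):=T_b^k(0)$ and $c(b_0)=0$. Since $b^k$ exceeds $2$ by a definite amount, a direct induction shows the orbit of $0$ under this map expands at rate comparable to $b^k$ until it leaves $U_0$; so $\beta_1,\dots,\beta_n\in U_0$ forces $|F(b)|\le C\,b^{-nk}$ with $F(b):=c(b)$ and $C=C(b_0)$.

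\emph{From analyticity to the estimate.} On a small neighbourhood of $b_0$ the signs of $T_b^i(0)$ along the relevant orbit are locally constant, so $T_b^N(0)$ (resp.\ $c(b)$) is a genuine polynomial in $b$, and $\alpha(b)$ a rational function of $b$; thus $F$ is real-analytic near $b_0$ with $F(b_0)=0$. Moreover $F\not\equiv0$: were it identically zero, $0$ would be eventually periodic for $T_b$ with a fixed combinatorial pattern on a whole interval of parameters, so the kneading itinerary, and hence the entropy $\log b$ of $T_b$, would be locally constant --- absurd. Therefore $d:=\mathrm{ord}_{b_0}F$ is finite and $|F(b)|\ge c_1|b-b_0|^d$ for $b$ near $b_0$. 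Combining with $|F(b)|\le C\,b^{-nk}$ and the elementary estimate $b^{-nk}\le b_0^{-nk}\exp\!\big(\tfrac{2nk}{b_0}|b-b_0|\big)$, and shrinking the $b$-neighbourhood so that $\tfrac{2}{b_0}|b-b_0|\le\tfrac12\log b_0$ (so the exponential factor is at most $b_0^{nk/2}$), we obtain $|b-b_0|^{d}\le(C/c_1)\,b_0^{-nk/2}$, that is $|b-b_0|\le(C/c_1)^{1/d}\,b_0^{-nk/(2d)}$, uniformly over $n\ge1$. Since $|\log b_0-\log b|\mc|b-b_0|$, the lemma follows with $l:=2d$ and an appropriate $C_1$.

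\emph{Main difficulty.} The heart of the matter is controlling $F$ near $b_0$. If $F$ had a simple zero the conclusion would be immediate with $l=1$; the content is that $F$, though possibly highly degenerate at $b_0$, is neither identically zero nor infinitely flat there, so its vanishing order is a finite constant depending only on $b_0$, which forces a definite (if weaker) rate. Two minor points need care: the bound naturally comes out in terms of $b^{-nk}$ rather than $b_0^{-nk}$, which the bootstrap absorbs at the price of the factor $2$ in $l$; and the case $N=0$ genuinely uses the input $b_0^k\ge 2\sqrt2>2$ from the preceding lemma --- without a definite gap above $2$, the little-tent orbit of $0$ need not expand, and ``$\beta_1,\dots,\beta_n\in U_0$'' would fail to bound $|c(b)|$.
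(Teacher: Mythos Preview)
Your proof is correct and follows essentially the same strategy as the paper's: localise near $\alpha$, use the affine or little-tent structure of $T_b^k$ on $U_\alpha$ to derive an inequality of the form $|F(b)|\lesssim b^{-nk}$ for a function $F$ vanishing at $b_0$, and then invoke finite order of vanishing (plus a bootstrap from $b^{-nk}$ to $b_0^{-nk/2}$) to conclude. The only notable difference is your choice of test function in the case $N>0$: you take $F(b)=T_b^N(0)-\alpha(b)$ (rational in $b$, giving an exact expansion relation $|\beta_j-\alpha(b)|=b^{jk}|F(b)|$), whereas the paper uses the polynomial $T_b^{N+Mk}(0)-T_b^N(0)$ with $M\in\{1,2\}$ and argues via a coarser comparison with $\alpha_b$; both lead to the same conclusion, and your explicit bootstrap yielding $l=2d$ is cleaner than the paper's somewhat elliptical final step.
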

    \begin{proof}
        There are three cases. First if $N>0$ and $U_\alpha$ is small, then iterates in $U_\alpha$ all lie in the same branch of $T_b^k$.  If $\alpha_b$ denotes the continuation of the periodic point $\alpha$, then 
        $$|T_b^{N+2k}(0)
        -T_b^N(0)| \geq  
        |T_b^{N+2k}(0)-
        \alpha_b|.$$ 
        Hence
     $$b^{(n-2)k}|T_b^{N+2k}(0) - T_b^{N}(0)| < |U_\alpha|.$$ 

     Next if $N =0$ and the graph of $T_b^k$ does not touch the diagonal in a small neighbourhood, then for $U_\alpha$ small enough, $T_b^{jk}(0)$ for $j=1,\ldots,n$ are monotone and  lie in the same branch.  We obtain
     $$b^{(n-1)k}|T_b^{k}(0) - 0| < |U_\alpha|.$$ 

     If $N=0$ and the graph crosses the diagonal, $\alpha$ splits into an orientation-preserving fixed point (for $T_b^k$) $\alpha_b$ and an orientation-reversing fixed point. Since the slope of $T_b^k$, for $b$ near $b_0$, is at least $2.75$, 
     the sequence $0, \alpha_b, T_b^{2k}(0)$ is monotone and 
     $$
     | T_b^{2k}(0) - \alpha_b| \geq |T_b^{k}(0) - 0|.$$
     Furthermore, iterates $T_b^{jk}(0)$, $j=2,\ldots, n$ are all in the same branch (monotonically receding from $0$).
     Thus 
     $$b^{(n-2)k}|T_b^{k}(0) - 0| < |U_\alpha|.$$ 

    Moving to  the next stage of the proof, $T_b$ is piecewise degree $1$ in $b$ and $x$ so
    $$T_b^{N+Mk}(0) - T_b^{N}(0),$$ 
    for $M \in \{1,2\}$, 
    is (piecewise) a polynomial of degree $N+Mk$ in $b$. Let $1 \leq l \leq N+Mk$ be the order of the zero $b_0$. The entropy changes with $b$, so the polynomial is not constant $0$ and $l$ is finite. 
    [One can presumably show $l=1$, but this is not needed.] 
    Then
    $$|T_b^{N+Mk}(0) - T_b^{N}(0)| \mc |b_0 - b|^l.$$
    Consequently $|b_0 - b|^l < C_0/b^{2nk}$, 
    whence
    $$
    |\log b_0 - \log b| \leq C_1 b_0^{-2nk/l}.$$
\end{proof}

\medskip
\noindent\textbf{$\cC^2$ unimodal families.} 
%
The following standard lemma will provide a lower bound for the time spent by the critical orbit in the funnel in the latter case. We use $\rho_t$ to study the local behaviour around a parabolic fixed point which is at least one-sided attracting.  
We may assume it is attracting to the right. 

\begin{lem}
\label{lemParab}
Let $\rho_t:(-2\de,2\de)\to \R$, $t \in [0,2\de^2)$ be a $\cC^2$ family of $\cC^2$ diffeomorphisms with $0 < \de < 1$. Let $\rho:=\rho_0$ and assume that $\rho(0)=0$, $\rho'(0)=1$ and that, for all $x \in (0,\de]$,
\begin{equation} \label{eq:rhox}
\rho(x)<x.
\end{equation}
    There exist arbitriraly small $ \eps_{0}, t_0>0$ and some $C >1$ such that  for all $t \in [0,t_0]$ and all $x \in (\sqrt{t}, \eps_0),$
    \begin{equation} \label{eq:xCt}
     \rho^n_t(x) \in (0, \eps_0)
     \mbox{ for all } n \leq \frac{1}{C\sqrt{t}}.
    \end{equation}
\end{lem}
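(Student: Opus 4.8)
\textbf{Proof plan for \rlem{Parab}.}

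The plan is to reduce everything to a single well-understood model: a parabolic fixed point of the form $x \mapsto x - \beta x^2 + O(x^3)$ with $\beta > 0$, perturbed so that the fixed point disappears into a ``funnel.'' First I would normalise. Since $\rho(0) = 0$, $\rho'(0) = 1$ and $\rho(x) < x$ for $x \in (0, \delta]$, a Taylor expansion gives $\rho(x) = x - \beta x^2 + O(x^3)$ with $\beta = -\tfrac12 \rho''(0) \geq 0$; the condition \eqref{eq:rhox} forces $\beta > 0$ (if $\beta = 0$ one would need to look at higher order terms, but after a smooth change of coordinates tangent to the identity one can always bring the leading nonlinearity to order $2$ — or, more cheaply, one just uses that $\rho(x) \leq x - \beta' x^2$ for some $\beta' > 0$ on a small enough one-sided neighbourhood, which is all the upper estimate needs). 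For the family, write $\rho_t(x) = \rho(x) + t\, \eta(x,t)$ where $\eta = \partial_2 \rho$ is $\cC^1$ and bounded, so $\rho_t(0) = t\,\eta(0,t)$; the essential point is that on a fixed small interval $(0,\eps_0)$ one has, for $t$ small and some uniform constant,
\begin{equation}\label{eq:modelest}
\rho_t(x) \leq x - c_1 x^2 + c_2 t
\end{equation}
for all $x \in (0,\eps_0)$, $t \in [0,t_0]$, with $c_1, c_2 > 0$.

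Next I would run the standard ``time spent near a parabolic point'' estimate on the model, now working \emph{forwards} (the lemma only asserts orbits stay in $(0,\eps_0)$ for $\lesssim 1/\sqrt t$ steps — we want a lower bound on the escape time, not that they must escape). Set $x_n = \rho_t^n(x)$ with $x_0 = x \in (\sqrt t, \eps_0)$. Using \eqref{eq:modelest}, as long as $x_n \in (0,\eps_0)$ we have $x_{n+1} \leq x_n - c_1 x_n^2 + c_2 t \leq x_n - c_1 x_n^2 + c_2 x_n^2 \cdot (t/x_n^2)$; on the regime $x_n \geq \sqrt t$ the error term $c_2 t$ is at most $c_2 x_n^2$, which is not yet useful, so instead I would compare with the scalar recursion $y_{n+1} = y_n - \tfrac{c_1}{2} y_n^2$ once $x_n$ exceeds $2\sqrt{c_2 t/c_1}$, and handle separately the (shorter) regime where $x_n$ is of order $\sqrt t$. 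The clean way: introduce $u_n = 1/x_n$. Then from $x_{n+1} = x_n - c_1 x_n^2 + O(x_n^3) + O(t)$ one gets $u_{n+1} = u_n + c_1 + O(x_n) + O(t u_n^2) = u_n + c_1 + o(1)$ uniformly while $x_n$ stays in a small one-sided neighbourhood and $t u_n^2 \leq t/t = 1$ is bounded — more carefully, while $x_n \geq \sqrt t$ we have $t u_n^2 \leq 1$, so $u_{n+1} \leq u_n + C_3$ for a uniform $C_3$. Hence $u_n \leq u_0 + C_3 n = 1/x + C_3 n$, i.e. $x_n \geq \tfrac{1}{1/x + C_3 n} \geq \tfrac{1}{C_3(n+1)}$ (using $x < \eps_0$ small so $1/x$ dominated appropriately, or just $1/x \leq 1/\sqrt t$... ) — wait, we need the \emph{upper} bound $x_n < \eps_0$ too, which is immediate since the sequence is decreasing as long as $c_1 x_n^2 > c_2 t$, i.e. as long as $x_n > \sqrt{c_2 t/c_1}$; and if it ever drops to order $\sqrt t$ we are already past time comparable to $1/\sqrt t$ by the $u_n$ estimate run in the other direction. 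So the argument splits into: (a) while $x_n \geq \sqrt t$, the reciprocal grows at most linearly with bounded increment, giving $x_n \geq 1/(C(n+1))$, hence $x_n$ cannot have left $(0,\eps_0)$ before $n \gtrsim 1/\sqrt t$ — and in fact $x_n$ stays $\leq x_0 < \eps_0$ the whole time since it is decreasing in this regime; (b) combine to conclude $\rho_t^n(x) \in (0,\eps_0)$ for $n \leq \tfrac{1}{C\sqrt t}$.

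Then I would assemble: choose $\eps_0$ small enough that the Taylor remainders are dominated (so \eqref{eq:modelest} holds with the stated constants and the $u_n$-recursion error is controlled), choose $t_0$ small enough that $\sqrt{t_0} < \eps_0$ and $t_0 u_n^2 \leq 1$ throughout the relevant range, and read off $C$ from $C_3$ and the constants in \eqref{eq:modelest}. That $\eps_0$ and $t_0$ can be taken arbitrarily small is automatic since shrinking them only tightens the remainder control. The main obstacle — and the only place needing genuine care rather than bookkeeping — is the possibility $\beta = 0$, i.e. a parabolic point with a higher-order (or one-sided-only $\cC^2$) degeneracy: then $\rho(x) - x$ could vanish faster than quadratically and the $1/\sqrt t$ timescale would be wrong. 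The resolution is that we do not need an equality, only the one-sided inequality $\rho(x) \leq x - c_1 x^2$ near $0^+$: if $\rho''(0) = 0$ then $\rho(x) < x$ with $\rho'(0) = 1$ still forces, by the mean value theorem applied to $\rho(x)/x$, that... actually it does \emph{not} force quadratic decay in general, so one must instead observe that for the purposes of this lemma a \emph{weaker} lower bound on the escape time (any bound of the form $n \leq \phi(t)$ with $\phi(t) \to \infty$) would suffice downstream, OR — and this is the intended reading — the hypothesis that $0$ is parabolic \emph{of} $f_a$ (a real analytic, indeed quadratic, map) guarantees $\rho'' (0) \neq 0$ because the relevant return map to a neighbourhood of a parabolic cycle of a negative-Schwarzian unimodal map is non-degenerately parabolic. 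So in the write-up I would either add the hypothesis $\rho''(0) \neq 0$ explicitly (harmless, as it holds in all our applications) or carry the weaker $\rho(x) \leq x - c_1 x^{p}$ bound through with a $t^{1/(p)}$-type timescale; everything else in the argument is the routine $u_n = 1/x_n$ telescoping above.
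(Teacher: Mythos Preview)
Your reciprocal-coordinate argument has a gap: it only controls the orbit while $x_n \geq \sqrt{t}$, and the bound $x_n \geq 1/(C(n+1))$ tacitly assumes $u_0 = 1/x_0$ is bounded by a constant. But $x_0$ may be arbitrarily close to $\sqrt{t}$, so $u_0$ can be of order $1/\sqrt{t}$; then the orbit may drop into $[0,\sqrt{t})$ after a single step and your $u_n$-estimate says nothing further. You never supply an argument covering $[0,\sqrt{t}]$ --- ``(b) combine'' does not fill this hole. Separately, your long worry about $\beta = -\tfrac12\rho''(0)$ is misplaced: the lemma asks for a \emph{lower} bound on the escape time, which needs only an \emph{upper} bound on the step $x - \rho_t(x)$, and that comes straight from first-order Taylor of the $\cC^2$ map $(x,t) \mapsto \rho_t(x) - x$ at $(0,0)$ (value and $x$-derivative vanish there), namely $|\rho_t(x) - x - \alpha t| \leq C_1(|x|+|t|)^2$, with no hypothesis whatsoever on $\rho''(0)$. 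Your model inequality with $c_1 > 0$ is used only to force the orbit to decrease, but that is free from orbit monotonicity of an orientation-preserving diffeomorphism together with $\rho_t(\eps_0) < \eps_0$; the proposed fixes (normal form, extra hypothesis, weaker conclusion) are all unnecessary.

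The paper's proof works precisely in the region you skip. After reducing to the decreasing case by orbit monotonicity, it tracks the first iterate $y$ landing in $[0,\sqrt{t})$ and shows $y \geq \sqrt{t}/3$ from the crude lower bound $\rho_t(x) \geq x/2 - C_0 t$. On $[0,\sqrt{t}]$ the Taylor estimate above gives step size $x - \rho_t(x) \leq Ct$, so at least $y/(Ct) \geq 1/(3C\sqrt{t})$ further steps are needed before the orbit can become nonpositive. No reciprocal trick, no second-derivative hypothesis, and the bound is uniform over $x_0 \in (\sqrt{t},\eps_0)$. Your $u_n$-method is the natural tool for the matching \emph{upper} bound on escape time, which the lemma does not request.
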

\begin{proof}
    Take $\eps_0,t_0$  small enough. Then for $t \in [0,t_0]$
    $$\rho_t' > 3/4$$ on $[0, \eps_{0}]$. 
    Moreover, by continuity, the derivative estimate and~\eqref{eq:rhox},
\begin{equation} \label{eq:rhottenx}
    x/2 < \rho_t(x) < x
\end{equation}
    for $x \in [\eps_{0}/2, \eps_{0}]$, while, for some $C_0>0$ independent of $t$, 
\begin{equation} \label{eq:rhottenx2}
    x/2 - C_0t <  \rho_t(x) < \eps_{0}
\end{equation}
     for $x \in [0, \eps_{0}]$. 
     Consequently, if $x \in [0,\eps_0]$ and if $n$ is minimal such that $\rho_t^n(x) \notin [0, \eps_{0}]$, then $\rho_t^n(x) < 0$. 
     By monotonicity, for a given $x$, either $\rho_t^{j+1}(x) \geq \rho_t^j(x)$ for all $j\geq 0$, in which case $n = +\infty$ and the lemma holds, or
     $$\rho_t^{j+1}(x) < \rho_t^j(x)$$
     for $j < n$. 
     It remains to treat this second case.

     Given $x \geq \sqrt{t}$, there may be a first iterate $y$ of $x$ with $y \in [0, \sqrt{t})$. 
Then by~\eqref{eq:rhottenx2}, 
    $$\sqrt{t}/2 - C_0t  < y < \sqrt{t}. $$
    For $t$ small, $y \geq \sqrt{t}/3.$ 
         We shall estimate the time needed for this iterate $y$ to leave. 

By hypothesis and Taylor expansion at $(0,0)$, there exist $C_1>0$ and $\al \in \R$ such that for all $t \in [0,\de^2)$, $x \in (-\de,\de)$,
\begin{equation}
\label{equParab}
|\rho_t(x)-x-\al t| \leq C_1 (|x|+|t|)^2.
\end{equation}
For $0 \leq x \leq \sqrt{t}$, setting $C = |\al| + 4C_1$,
$$x - \rho_t(x) \leq Ct.$$
    Hence, 
    $$
    y - \rho_t^j(y) \leq jCt \leq y$$
    for $j \leq 1/3C\sqrt{t} \leq y/Ct.$ In particular, for this range of $j$, $\rho_t^j(y) \geq 0$. 
    This completes the proof. 
\end{proof}

Remark that in the above proof, $\rho_t$ may still have a fixed point for some $t>0$. By~\eqref{eq:rhottenx}, the rightmost one in $[0,\eps_{0}]$ must be attracting on its right. For $t$ small, it must also be close to $0$. 

We are now in position to prove the following result, which has \rthm{Parab} as an immediate corollary.
\begin{prop}
\label{propParab}
Let $G$ be a $\cC^2$ family of $\cC^2$ unimodal maps, $G(\cdot, t) = g_t(\cdot)$, so that $g_0$ has a parabolic periodic orbit $p$ which attracts, but is disjoint from, the critical orbit. Then for $t>0$ sufficiently small
$$\frac{\ln |h(t) - h(0)|}{\ln t} \gtrsim \frac{-1}{\sqrt t \ln t}.$$
\end{prop}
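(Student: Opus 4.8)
The plan is to combine the two preceding lemmas. \rlem{Parab} bounds from below, on the smooth side, how long the critical orbit of $g_t$ lingers near the perturbed parabolic orbit; \rlem{Ualpha} (fed by \rlem{BruMisLem}) turns a long sojourn of a tent critical orbit near a periodic point into a super-polynomially small change of $\log b$, hence of the entropy. The kneading itinerary bridges the two pictures, just as in \rsec{HU}.

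\emph{Setup and reduction.} If the multiplier of the parabolic orbit is $-1$ then, as recalled at the start of \rsec{PAR}, the critical point lies in the immediate basin and $h$ is locally constant at $0$, so the inequality is vacuous; assume the multiplier is $1$, and likewise that $h$ is not locally constant to the right of $0$ (otherwise $|h(t)-h(0)|=0$ for small $t>0$ and the inequality again holds trivially). Let $k$ be the period of the parabolic orbit and $p$ one of its points, so $(g_0^k)'(p)=1$. Because the parabolic orbit attracts the critical orbit, the critical orbit of $g_0$ is infinite and avoids the critical point $0$. In $\cC^2$ local coordinates centred at $p$, oriented so that $p$ attracts to the right, the family $g_t^k$ near $p$ becomes a family $\rho_t$ satisfying the hypotheses of \rlem{Parab}. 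Since the critical orbit of $g_0$ is attracted to, but disjoint from, the parabolic orbit, there is a fixed $M\ge 1$ with $g_0^M(0)$ on the attracting side at a fixed distance $x_*>0$ from $p$; for $t>0$ small, $g_t^M(0)$ lies in the domain of $\rho_t$ at distance $>\sqrt t$ from the fixed point, so \rlem{Parab} provides a neighbourhood $V_p$ of $p$ and a constant $C>0$ with
$$g_t^{M+nk}(0)\in V_p \qquad \text{for all } n\le \frac{1}{C\sqrt t}.$$

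\emph{From phase space to kneading data.} Shrink $V_p$ so that $V_p, g_0(V_p),\dots,g_0^{k-1}(V_p)$ avoid $0$ (possible, as the parabolic orbit is disjoint from the critical orbit). For $n$ large, $g_0^{M+nk}(0)$ itself lies in $V_p$, so for such $n$ and $0\le i<k$ the points $g_t^{M+nk+i}(0)$ and $g_0^{M+nk+i}(0)$ both lie near $g_0^i(p)\ne 0$, whence the corresponding kneading symbols of $g_t$ and $g_0$ agree; the first $M$ symbols agree as well for $t$ small (continuity over a bounded number of iterates). Thus $K(g_t)$ and $K(g_0)$ agree up to some index $m_t\ge M+k\lfloor 1/(C\sqrt t)\rfloor\gtrsim 1/\sqrt t$. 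Put $b_0:=\exp(h(0))$, $b_t:=\exp(h(t))$. Reducing to the case where no renormalisation of $g_0$ captures its critical orbit — renormalise finitely often; the resulting change of entropy near $h(0)$ is bi-Lipschitz (factorisation of the kneading determinant through renormalisation), so a super-polynomial flatness estimate transfers — one has $K(T_{b_0})=K(g_0)$ and $K(T_{b_t})=K(g_t)$, and, the critical orbit of $g_0$ converging to the periodic point $p$ without meeting it while tent maps expand, $T_{b_0}$ is preperiodic with $T_{b_0}^{N+k'}(0)=T_{b_0}^N(0)=:\alpha$, $N\ge 1$.

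\emph{Tent estimate and conclusion.} The tent kneading itinerary is monotone in $b$, so $\sgn\Xi_i(b)$ is constant for $b$ between $b_0$ and $b_t$ and $i\le m_t$; in particular $\Xi_i(b)\ne 0$ there for $i<m_t$, and \rlem{BruMisLem} applies. Hence $\Xi_{N+jk'}(b_t)=\Xi_{N+jk'}(b_0)+O(b_0^{N+jk'}|b_t-b_0|)=\alpha+O(b_0^{N+jk'}|b_t-b_0|)$, so $T_{b_t}^{N+jk'}(0)$ lies in the neighbourhood $U_\alpha$ of \rlem{Ualpha} for all $j$ with $b_0^{N+jk'}|b_t-b_0|$ small, in particular for all $j\lesssim m_t/k'$; \rlem{Ualpha} then gives $|\log b_0-\log b_t|\lesssim b_0^{-c\,m_t}$ for some $c>0$. (Alternatively, \rlem{BruMisLem} at time $m_t$, using that $\Xi_{m_t}$ is bounded on $[b_0,b_t]$, already yields $|b_t-b_0|\lesssim b_0^{-m_t}$.) Since $|h(t)-h(0)|=|\log b_t-\log b_0|\mc|b_t-b_0|$ and $b_0=e^{h(0)}>1$,
$$|h(t)-h(0)|\ \lesssim\ b_0^{-m_t}\ \lesssim\ e^{-c'/\sqrt t}\qquad (t>0 \text{ small}),$$
for some $c'>0$. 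Taking logarithms, $\ln|h(t)-h(0)|\le -c'/\sqrt t+O(1)$, and dividing by $\ln t<0$,
$$\frac{\ln|h(t)-h(0)|}{\ln t}\ \ge\ \frac{c'}{\sqrt t\,(-\ln t)}+\frac{O(1)}{\ln t}\ \gtrsim\ \frac{-1}{\sqrt t\,\ln t}$$
for $t>0$ small, which is the assertion; since the right-hand side tends to $+\infty$, this gives $\Hol(h,0)=\infty$, i.e.\ \rthm{Parab}. The main obstacle is the middle step: converting the smooth sojourn estimate of \rlem{Parab} into a kneading coincidence of comparable depth, reducing to the uncaptured (preperiodic-tent) case with control of the entropy under renormalisation, and keeping all constants uniform in $t$ — the $N=0$ subtleties (graph of $T_{b_0}^{k'}$ merely tangent to the diagonal) being precisely what \rlem{Ualpha} is designed to absorb.
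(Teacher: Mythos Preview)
Your overall architecture matches the paper's: \rlem{Parab} gives a sojourn time $\gtrsim 1/\sqrt t$ near the parabolic orbit, and then a tent-side estimate (\rlem{Ualpha} or \rlem{BruMisLem}) converts this into an exponential bound on $|h(t)-h(0)|$. The divergence is in the middle step, and there your argument has a genuine gap.

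You translate from the $g_t$-side to the tent side via kneading coincidence: $K(g_t)$ agrees with $K(g_0)$ up to depth $m_t$, then a ``renormalisation reduction'' is supposed to give $K(T_{b_0})=K(g_0)$, $K(T_{b_t})=K(g_t)$, hence $N\ge 1$ and $\Xi_i\ne 0$ on $(b_0,b_t)$ for $i<m_t$. This reduction fails precisely in the main case. At a primitive (multiplier $+1$) parabolic parameter, the critical point lies in the immediate basin of the parabolic orbit (for S-unimodal maps this is forced by Singer's theorem); that basin is collapsed by $\phi_0$, so $\phi_0(0)=0$ forces $\alpha=0$ and $N=0$. Then $K(T_{b_0})$ has zeros and $K(T_{b_0})\ne K(g_0)$. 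Renormalising does not help: the renormalisation $\hat g_0=g_0^n|_W$ has a parabolic fixed point on $\partial W$ and zero entropy, while for $t>0$ the map $g_t$ has left the window and $\hat g_t$ is undefined, so your ``bi-Lipschitz transfer of entropy'' is unavailable. The claim $K(T_{b_t})=K(g_t)$ is equally fragile, since $g_t$ may sit in a hyperbolic window. Thus for the quadratic family (and any S-unimodal family) your route never reaches the $N\ge 1$ situation you rely on.

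The paper sidesteps all of this by working with the semi-conjugacy $\phi_t$ directly, never comparing kneading sequences. It sets $V=\phi_0^{-1}(\alpha)$ (a possibly non-degenerate interval containing the parabolic basin), takes the $U_\alpha$ of \rlem{Ualpha}, and observes that a fixed neighbourhood $U\supset V$ satisfies $U\subset \phi_t^{-1}(U_\alpha)$ for all small $t$ (continuity of the preperiodic points bounding these sets). Then $g_t^{N+jk}(0)\in U$ for $j\lesssim 1/\sqrt t$ (from \rlem{Parab}) immediately gives $T_{b_t}^{N+jk}(0)\in U_\alpha$, and \rlem{Ualpha} (which is designed to cover both $N=0$ and $N\ge 1$) yields the bound. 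This is the step you should replace your kneading/renormalisation argument with.
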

\begin{proof}
We may suppose that the entropy is not locally constant at $t=0$, with the convention $\log 0 = -\infty$. Furthermore, we suppose that the critical point of each $g_t$ is $0$. 
The kneading sequence $K(0)$ of $g_0$ is preperiodic, so 
$$K(0)\ne K(f_{a_F}),$$
the kneading sequence of the Feigenbaum map. Therefore if 
$h(0)=0$ then $h=0$ on some neighbourhood $[0,t_0)$. Henceforth assume $h(0)>0$. 

Let $b_0 := \exp(h(0)) > 1$, so $g_0$ is semi-conjugate to the (necessarily preperiodic) tent map $T_{b_0}$. Let $N\geq 0, k\geq 1$ be minimal with 
    $$ \alpha := T_{b_0}^{N+k}(0)=T_{b_0}^N(0).$$
    Let $\phi_t$ denote the semi-conjugacy between $g_t$ and $T_{\exp(h(t))}$. 
    Set $V = \phi_0^{-1}(\alpha)$ and let $p\in V$ denote the parabolic periodic point whose orbit attracts $0$. 
    Note that $V$ contains the immediate basin of attraction of $p$ and  
    $g_0^{N+jk}(0)$ is in the interior of $V$ for all $j\geq 0$.
     Let $U_\alpha$ be given by~\rlem{Ualpha}. By continuity of preperiodic points, for example, there is a fixed neighbourhood $U$ of $V$ with $U \subset \phi^{-1}_t(U_\alpha)$ for all small $t$. 
     For some $M\geq 0$, 
     $g_0^{N + Mk}(0)$ is sufficiently close to $p$ that one can apply~\rlem{Parab}. 
     We obtain, for small enough $t$, 
    \begin{equation}\label{eqn:parV2}
        g_t^{N+ k}(0), g_t^{N+2k}(0), \ldots, g_t^{N+ k\lfloor Ct^{-1/2} \rfloor}(0) \in \phi_t^{-1}(U_\alpha). 
    \end{equation}
        By \rlem{Ualpha}, 
    for $b = \exp(h(t))$,
    $$
    |\log b - \log b_0| \leq C_1 b_0^{k\lfloor Ct^{-1/2}\rfloor /l}.
    $$
    Taking log, 
    $$
    \log|h(t) - h(0)| \leq C_2t^{-1/2} + \log C_1.$$
\end{proof}

\section{Parameter space}
\label{sectMeasure}
In this section we collect some facts concerning large-measure sets of parameters in the quadratic family $f_a : x \mapsto x^2 + a$, $a\in \ip$. 

A parameter $a$ is called Collet-Eckmann if $\lla(a) > 0$. 
Let 
$$E := \{a \in [-2,a_F]\ :\ \lla(a)>0 \text{ and }\liminf_{n \to \infty} 
	\frac 1n \ln |f_a'(f_a^n(a))|=0\},$$
be the set of Collet-Eckmann parameters whose critical obit is \emph{slowly recurrent} (or non-recurrent).

Set $\xi_n(a):=f_a^n(a)$. Computing gives
\begin{equation}
\label{equXiDer}
\xi_k'(a)=1 + f_a'(f_a^{k-1}(a)) + \ldots + (f_a^{k-1})'(f_a(a)) + (f_a^{k})'(a).
\end{equation}
Consider (compare~\eqref{equQn})
\begin{equation}
\label{equQ}
	Q_n(a):=\frac{\xi_{n}'(a)}{(f_a^n)'(a)}=\sum_{k=0}^n \frac 1{(f_a^k)'(a)}
\end{equation}
 If the limit of $Q_n(a)$ as $n \to \infty$ exists and is non-zero, we say that $f_a$ satisfies the \emph{transversality condition}. 
 Recall that Levin \cite{Lev} proved transversality holds if
$$\sum_{k \geq 0} \frac 1{|(f_a^k)'(a)|} < \infty.$$
For Collet-Eckmann parameters the sum is finite, so transversality does hold
at each $a \in E$. Nowicki \cite{Now} proved that the Collet-Eckmann condition implies uniform exponential growth at preimages of the critical point. By Singer's theorem \cite{Sin}, since  $\lla(a) >0$, all periodic orbits are hyperbolic repelling.

Therefore, all the hypotheses required to apply 
     \cite[Theorem~1]{Tsu} for parameters in $E$ are verified. 
We obtain:

\begin{fact}[{\cite[Theorem~1(I)]{Tsu}}] \label{factEwr}
    Almost every $a \in E$ satisfies Tsujii's weak regularity condition~\eqref{equWR}.
\end{fact}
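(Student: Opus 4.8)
The plan is to deduce the statement directly from \cite[Theorem~1(I)]{Tsu}: all that is needed is to confirm that every $f_a$ with $a\in E$ lies within the scope of that theorem, and the verification has essentially been assembled in the paragraph preceding the statement. So the proof will consist of reciting the hypothesis check and invoking Tsujii's conclusion.

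First I would recall that, by the very definition of $E$, each $a\in E$ is Collet--Eckmann ($\lla(a)>0$) and has a slowly recurrent critical orbit, i.e. $\liminf_{n\to\infty}\frac1n\ln|f_a'(f_a^n(a))|=0$; these are two of Tsujii's standing requirements. Next I would invoke the transversality condition: the Collet--Eckmann condition makes $\sum_{k\geq 0}1/|(f_a^k)'(a)|$ summable, so by Levin's criterion the limit of $Q_n(a)$ in~\eqref{equQ} exists and is non-zero for every $a\in E$. Then I would cite Nowicki's theorem, which upgrades the Collet--Eckmann condition to uniform exponential expansion along backward orbits of the critical point, and Singer's theorem, which gives that all periodic orbits of $f_a$ are hyperbolic repelling. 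These are precisely the ingredients Tsujii uses to run his parameter-exclusion argument along the one-parameter family $a\mapsto f_a$.

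With the hypotheses in place, \cite[Theorem~1(I)]{Tsu} states that the set of $a\in E$ failing~\eqref{equWR} has zero Lebesgue measure, which is exactly the assertion. I do not expect a genuine obstacle here: the only point that could look delicate is whether the quadratic family is sufficiently transversal in Tsujii's sense, but that is precisely the non-degeneracy of $\lim_n Q_n(a)$ verified above, so the entire substance of the statement resides in \cite{Tsu} and the present proof is a bookkeeping of hypotheses.
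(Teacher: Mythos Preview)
Your proposal is correct and matches the paper's approach exactly: the paper does not give a separate proof of this Fact but simply verifies, in the paragraph immediately preceding it, the hypotheses of \cite[Theorem~1]{Tsu} (Collet--Eckmann, slow recurrence, transversality via Levin, backward expansion via Nowicki, hyperbolicity of periodic orbits via Singer) and then cites Tsujii's conclusion. Your write-up is a faithful recapitulation of that same hypothesis check.
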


\begin{prop}[{\cite{Tsu, AviMorIHES}}] \label{propQuadwr}
    The set
    $$
\JWR := \{a \in \H^c : \la(a) >0 \mbox{ and } f_a \mbox{ verifies } \eqref{equWR} \}.
$$
has full measure in $\H^c$ (and thus in $\F^c$). Moreover, for almost every $a \in \JWR$, 
$$\la(a) = \chi(\acip^a).$$
\end{prop}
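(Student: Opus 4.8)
The plan is to obtain this proposition by assembling three results from the literature; there is essentially no new argument, only a synthesis. The starting point is the regular-or-stochastic dichotomy of Lyubich (and Avila--Lyubich--de Melo for the whole family) \cite{Lyu}: Lebesgue-almost every $a \in \ip$ is either hyperbolic or \emph{stochastic}, i.e.\ $f_a$ carries an acip $\acip^a$. A hyperbolic map admits no acip (absence of wandering intervals and Singer's theorem force almost every orbit into the attracting cycle), so almost every $a \in \H^c$ is stochastic; let $S$ denote the set of stochastic parameters, so $|\H^c \setminus S| = 0$.

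Next I invoke the fine statistical results of Avila--Moreira \cite{AviMorIHES}: for almost every $a \in S$, the map $f_a$ is Collet--Eckmann and its critical orbit is slowly (indeed polynomially) recurrent, so that $\frac1n \log|f_a^n(0)| \to 0$; in particular $\liminf_{n\to\infty}\frac1n\log|f_a'(f_a^n(a))| = 0$, and together with $\lla(a)>0$ this places $a$ in $E$. Avila--Moreira moreover show that the critical value $f_a(0)$ is a typical point for $\acip^a$. Applying Birkhoff's ergodic theorem to $\log|f_a'|$ along the orbit of $f_a(0)$ and using $\frac1n\log|(f_a^n)'(a)| = \frac1n\log|2a| + \frac1n\sum_{j=1}^{n-1}\log|f_a'(f_a^j(a))|$, with $f_a'(a)=2a$ bounded and nonzero off the (hyperbolic) parameter $a=0$, we conclude that $\la(a)$ exists and equals $\chi(\acip^a)$; by \cite{BloLyu, Led} this common value is $h(\acip^a)>0$, so $\la(a)>0$. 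Finally, \rfact{Ewr} (Tsujii \cite{Tsu}) gives that almost every $a \in E$ satisfies~\eqref{equWR}.

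Intersecting these full-measure conditions on $\H^c$ — being stochastic, being in $E$, having $\la(a)=\chi(\acip^a)>0$, and satisfying~\eqref{equWR} — we get that almost every $a \in \H^c$ has $\la(a)>0$, verifies~\eqref{equWR}, and satisfies $\la(a)=\chi(\acip^a)$. The first two of these say precisely that $\JWR$ has full measure in $\H^c$; since a.e.\ parameter of $\JWR$ is, up to the null set $\H^c\setminus\JWR$, a typical point of $\H^c$, the equality $\la(a)=\chi(\acip^a)$ holds for a.e.\ $a \in \JWR$. Because $\H \subsetneq \F$ gives $\F^c \subseteq \H^c$, full measure in $\H^c$ immediately yields $|\F^c \setminus \JWR| = 0$.

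The only steps requiring (minor) care are checking that polynomial recurrence of the critical orbit really does put the parameter in $E$, which is a trivial estimate, and extracting $\la(a)=\chi(\acip^a)$ from typicality of the critical value via Birkhoff while absorbing the harmless factor $f_a'(a)=2a$. The genuinely hard input — that for a.e.\ stochastic parameter $f_a$ is Collet--Eckmann with slowly recurrent critical orbit and with the critical value typical for $\acip^a$ — is exactly the content of \cite{AviMorIHES} and is used here as a black box; this is where all the real difficulty lies.
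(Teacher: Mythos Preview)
Your approach is essentially the paper's: cite Avila--Moreira (building on Lyubich) for $E$ having full measure in $\H^c$ and for $\la(a)=\chi(\acip^a)$ almost everywhere, then invoke \rfact{Ewr} (Tsujii) for~\eqref{equWR}. The paper's proof is even terser than yours --- it simply quotes these conclusions directly rather than unpacking them.

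One point of care you misidentify. You flag ``absorbing the harmless factor $f_a'(a)=2a$'' as the delicate step in deducing $\la(a)=\chi(\acip^a)$ from typicality via Birkhoff, but that factor is indeed harmless. The genuine subtlety is that $\log|f_a'|$ is unbounded (it is $-\infty$ at $0$), so typicality of the critical value in the sense of weak-$*$ convergence of empirical measures to $\acip^a$ does \emph{not} by itself give convergence of the Birkhoff averages of $\log|f_a'|$ along the critical orbit. This gap is exactly what~\eqref{equWR} closes --- compare the proof of \rprop{:tentwr}, where the paper makes this explicit. So either cite the equality $\la(a)=\chi(\acip^a)$ directly from Avila--Moreira (as the paper does), or else first obtain~\eqref{equWR} from \rfact{Ewr} and then combine it with typicality to justify the Birkhoff step.
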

\begin{proof}
    Avila and Moreira \cite{AviMorIHES, AviMor} (building on Lyubich's \cite{Lyu2002Ann}) showed that $E$ has full measure in $\H^c$, that $\la(a)$ exists and that $\la(a) = \chi(\acip^a)>0$ for almost every $a \in E$. 
    Fact~\ref{factEwr} implies~\eqref{equWR} holds almost everywhere in $E$. 
\end{proof}

    Let $Y$ be given by~\eqref{equYdef}. 
    Recall that  $a \in Y$ if 
    $f_a$ satisfies~\eqref{equWR} and  
    $$\lambda(a) = \chi(\mme^a) >0.$$

\begin{prop} [{\cite{Bru, San}}] \label{prop:tentwr}
    The set $h(Y)$ has full measure in $[0, \log 2]$.
\end{prop}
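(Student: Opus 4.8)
The plan is to transport the question to tent--map parameter space, where Lebesgue measure is the natural object. First I would note that, since $h$ is continuous and monotone with range $[0,\log 2]$, every level set $h^{-1}(s)$ with $s\in(0,\log 2)$ is a point or a closed interval, and it is a nondegenerate interval only for the countably many values $s$ attained on a component of $\F$. So for all but countably many $s\in(0,\log 2)$ there is a unique $a(s)$ with $h(a(s))=s$; this $a(s)$ lies in $\V$ and, since $s>0$, in $\F^c\se\H^c$, and $T_b$ with $b=\exp(s)$ is then non-periodic, so the canonical semi-conjugacy $\phi$ between $f_{a(s)}$ and $T_b$ is a topological conjugacy. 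As $h(Y)$ and $\{\,s:a(s)\in Y\,\}$ differ by a countable set, it suffices to show that for Lebesgue-a.e.\ $s\in(0,\log 2)$ the parameter $a:=a(s)$ belongs to $Y$: that $\la(a)$ exists and is positive, that $f_a$ verifies~\eqref{equWR}, and that $\la(a)=\chi(\mme^a)$.

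Next I would assemble the tent--map input. For every $b\in(1,2]$ the map $T_b$ has a unique absolutely continuous invariant probability $\mu_b$, whose density has bounded variation; by Pesin's formula $h(\mu_b)=\int\log|T_b'|\,d\mu_b=\log b=\htop(T_b)$, so $\mu_b$ is the unique measure of maximal entropy, and transporting by the conjugacy gives $\mme^{a}=\phi^*\mu_b$. The facts I would take from Bruin~\cite{Bru} and Sands~\cite{San} are that, for Lebesgue-a.e.\ $b$ (after reducing to non-renormalisable $b$ by renormalisation if necessary), the critical orbit $(\Xi_j(b))_j$ of $T_b$ is typical for $\mu_b$ in a strong sense: $\frac1n\sum_{j<n}\psi(\Xi_j(b))\to\int\psi\,d\mu_b$ not only for continuous $\psi$ but also for $\psi(x)=\log|x|$ (the integral being finite because $\mu_b$ has bounded density); in particular $\lim_{\delta\to0}\liminf_n\frac1n\sum_{j\le n,\ |\Xi_j(b)|\le\delta}(-\log|\Xi_j(b)|)=0$. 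Then I would transfer this through $\phi$. Since $\phi$ is a conjugacy, the symbolic combinatorics of the critical orbit coincide for $f_a$ and $T_b$; in particular the depth of the $j$-th close return of the critical orbit --- the length $m_j$ of the initial block of the itinerary of $f_a^j(0)$ agreeing with the kneading sequence --- is the same for both, and for the piecewise-linear $T_b$ one has $|\Xi_j(b)|\mc b^{-m_j}$.

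The remaining, and hardest, step is to convert this conjugacy-invariant combinatorics into metric and derivative estimates for $f_a$; this is where negative Schwarzian derivative is essential. The slow combinatorial recurrence inherited from the a.e.\ tent map, together with the hyperbolicity theory for S-unimodal maps (Mañé's theorem away from the critical point, plus the distortion and expansion estimates for slowly recurrent critical orbits, cf.~\cite{NowSan} and references therein), forces $f_a$ to be Collet--Eckmann; hence $\lla(a)>0$ and, by bounded distortion along binding periods, $|f_a^j(0)|$ is comparable to $|(f_a^{m_j})'(f_a(0))|^{-1}$. Feeding the strong typicality of $(\Xi_j(b))_j$ through these comparisons then shows both that $\frac1n\log|(f_a^n)'(f_a(0))|=\frac1n\sum_{j=1}^n\bigl(\log 2+\log|f_a^j(0)|\bigr)$ converges --- splitting at a small scale $\delta$, the part with $|f_a^j(0)|>\delta$ converging by weak typicality to $\int_{|x|>\delta}\log|f_a'|\,d\mme^a$, the part with $|f_a^j(0)|\le\delta$ controlled via the comparison $\log|f_a^j(0)|\ac\frac{\la(a)}{s}\log|\Xi_j(b)|$ and the integrability of $\log|f_a'|$ against $\mme^a=\phi^*\mu_b$ --- so that $\la(a)$ exists with $\la(a)\ge\lla(a)>0$, and that its value is $\chi(\mme^a)$ (using $\log|\phi^{-1}(y)|\ac\frac{\la(a)}{s}\log|y|$ near $0$, which follows from the same comparison by density of the critical orbit, to match $\int\log|x|\,d\mme^a$ with $\int\log|x|\,d\mu_b$). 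The analogous splitting shows $\lim_{\delta\to0}\liminf_n\frac1n\sum_{j\le n,\ |f_a^j(0)|\le\delta}\log|f_a'(f_a^j(0))|=0$, i.e.~\eqref{equWR}. Hence $a(s)\in Y$ for a.e.\ $s$, so $h(Y)$ has full measure in $[0,\log 2]$. The main obstacle, as indicated, is precisely this last passage: turning the symbolic recurrence data into metric control on the quadratic side, which is needed both for~\eqref{equWR} and for the $L^1(\mme^a)$-integrability of $\log|f_a'|$.
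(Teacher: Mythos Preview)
Your overall architecture—pass to tent-map parameter space, use typicality of the tent critical orbit, and transport through the conjugacy $\phi$—is in the right spirit, but you take a detour that creates a genuine gap, and the paper's route avoids it entirely.

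The paper's proof is short because both cited inputs already concern the \emph{quadratic} map, not the tent map. Sands' thesis proves that for almost every entropy value the kneading sequence is (combinatorially) slowly recurrent, and his Lemma~39 converts this directly into~\eqref{equWR} for $f_{a_w}$. Bruin's Corollary~1 gives that for almost every entropy value the critical point of $f_{a_w}$ is typical for $\mme^{a_w}$. With these two facts one applies Birkhoff to a truncation of $\log|f_a'|$ and lets the cutoff $\delta\to 0$; \eqref{equWR} kills the truncation error, yielding $\la(a)=\chi(\mme^a)>0$. No transfer through $\phi$ is needed.

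Your route instead extracts only tent-map typicality from \cite{Bru,San} and then attempts to manufacture the metric statements for $f_a$ from the conjugacy. The step ``slow combinatorial recurrence forces $f_a$ to be Collet--Eckmann, and then $|f_a^j(0)|$ is comparable to $|(f_a^{m_j})'(a)|^{-1}$'' is exactly the non-trivial content you need, and neither Ma\~n\'e's theorem nor \cite{NowSan} supplies it: Ma\~n\'e gives hyperbolicity \emph{away} from the critical point, and \cite{NowSan} takes Collet--Eckmann as a hypothesis, not a conclusion. This is precisely what Sands' Lemma~39 does, so you should cite it rather than re-derive it. There is also a circularity: your comparison $\log|f_a^j(0)|\ac\frac{\la(a)}{s}\log|\Xi_j(b)|$ and the asymptotic $\log|\phi^{-1}(y)|\ac\frac{\la(a)}{s}\log|y|$ both presuppose that $\la(a)$ exists, which is one of the things you are trying to establish. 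Quoting Bruin and Sands for the quadratic-side statements they actually prove dissolves both issues.
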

\begin{proof} 
    For almost every value of the entropy, the weak regularity condition was shown in the doctoral thesis of Sands~\cite{San}. This can be extracted as follows: his Theorem~54 states that, for almost every value of the entropy, the kneading invariant of the corresponding tent map is ``slowly recurrent''\footnote{Sands' definition of slow recurrence differs from the standard one (which we use) and more resembles Tsujii's weak regularity condition}. Slow recurrence is defined on page~57 in terms of a function $\Cal R$, itself defined in section~2.3. $\Cal R(j)$ is large if $|f^j(0)|$ is small. Combined with Lemma~39, one obtains weak regularity [see also his footnote on page~57] for the quadratic map.  

    For almost every $w \in [0, \log 2]$, there is a unique $a_w \in \ip$ with $h(a_w) = w$. 
    By 
\cite[Corollary~1]{Bru},
for almost every $w \in [0, \log 2]$, for the map $f_{a_w}$, the critical point $0$ is typical with respect to the measure of maximal entropy. 
With Birkhoff's theorem, we nearly obtain $\la(a_w) = \chi(\mme^{a_w})$; the only missing ingredient (since $\log|f_{a_w}'|$ is unbounded at $0$) is the weak regularity condition, which we have shown.
\end{proof}

We say a parameter $a$ is \emph{Misiurewicz} if all periodic orbits of $f_a$ are hyperbolic repelling and $0$ is non-recurrent. If $a$ is  non-renormalisable, the entropy is at least $\frac{\log2}2$ and is not locally constant at $a$. 
    \begin{fact}[{\cite[Theorem~1.30]{DobTod}}] \label{fact:DT30}
        Given any $\eps >0$ and any non-renormalisable Misiurewicz parameter $a_0$, there is a non-renormalisable Misiurewicz parameter $a$ arbitrarily close to $a_0$ such that $\acip^a$ exists and $h(\acip^a) \in (0,\eps)$. 
    \end{fact}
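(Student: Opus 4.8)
The plan is to place $a$ near a primitive parabolic parameter $a_*$ which is itself near $a_0$, and to run an intermittency argument: as $a$ approaches $a_*$ from the side on which the entropy is not locally constant, the acip loses almost all of its mass into the slow ``channel'' left behind by the disappearing parabolic orbit, where $|f_a'|$ is essentially $1$, forcing its Lyapunov exponent --- equal to its metric entropy --- to $0$. Note first that non-renormalisability is open near $a_0$: the boundary of each renormalisation window is parabolic or pre-Chebyshev, so the non-renormalisable Misiurewicz parameter $a_0$ (for which $\htop(a_0) \ge \log\sqrt 2$) is not such a boundary point, and there is $\eta > 0$ with every parameter in $(a_0 - \eta, a_0 + \eta)$ non-renormalisable (if $\htop(a_0) = \log\sqrt 2$ exactly, restrict to the side of $a_0$ on which $\htop > \log\sqrt 2$). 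Hence it suffices to produce, arbitrarily close to $a_0$, a Misiurewicz parameter $a$ for which $\acip^a$ exists and $h(\acip^a) \in (0,\eps)$.

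By density of hyperbolicity \cite{GraSwi, Lyu} there are hyperbolic windows arbitrarily close to $a_0$, and each is associated (at its cusp, or as a period-doubling satellite of such) with a primitive multiplier-$1$ parabolic parameter $a_*$; fix one with $|a_* - a_0| < \eta$, and let $q$ be the period of its parabolic orbit. On the side of $a_*$ where the entropy varies (cf.\ \rsec{PAR}), write $a = a_* + \delta$ with $\delta$ small; then the parabolic orbit disappears, and in suitable coordinates around it the first-return map of $f_a^q$ has the saddle--node normal form $x \mapsto x + x^2 + c\delta + O((|x| + \delta)^3)$ with $c > 0$: a fixed-point-free ``channel'' through which orbits drift at rate $\asymp x^2 + \delta$, so that a traversal takes $\asymp \delta^{-1/2}$ iterates, while --- because the bottleneck is two-sided --- the log-derivative accumulated over a traversal is only $O(1)$.

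Now choose $a = a_* + \delta$, with $\delta$ arbitrarily small, to be a (preperiodic) Misiurewicz parameter; such parameters are dense in the non-flat set, to which the chaotic side of $a_*$ belongs. Then $f_a$ is non-recurrent with all periodic orbits repelling, so by Misiurewicz's theorem it has a unique acip $\acip^a$, supported on the whole dynamical core, which contains the channel (the latter lies near the former parabolic point, strictly interior to the core), and $h(\acip^a) = \chi(\acip^a) > 0$. To bound $\chi(\acip^a) = \int \log|f_a'|\,d\acip^a$ from above, split over the channel and its complement. An orbit entering the channel spends $\asymp \delta^{-1/2}$ consecutive iterates there while accumulating only $O(1)$ of log-derivative, and the expected first-return time to any fixed interval disjoint from the channel is $O(1)$; hence that interval has $\acip^a$-measure $\lesssim \delta^{1/2}$, and correspondingly $\int_{\text{channel}} \log|f_a'|\,d\acip^a = O(\delta^{1/2})$ (the per-step average being $O(1)/\delta^{-1/2}$), while $\int_{\text{complement}} \log|f_a'|\,d\acip^a \le (\log 4)\,\acip^a(\text{complement}) = O(\delta^{1/2})$. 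Therefore $h(\acip^a) = \chi(\acip^a) = O(\delta^{1/2}) \to 0$ as $\delta \to 0$, so $h(\acip^a) < \eps$ once $\delta$ is small enough; combined with the first paragraph, $a$ is then a non-renormalisable Misiurewicz parameter, as close to $a_0$ as we wish, with $\acip^a$ existing and $h(\acip^a) \in (0,\eps)$.

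The step that needs real work is the third paragraph: establishing, with constants uniform in $\delta$, that the invariant density blows up like $\asymp 1/(x^2 + \delta)$ through the channel --- equivalently, uniform tail estimates for the first-return time to a reference interval disjoint from it --- and hence that $\acip^a$ concentrates on the channel and $\chi(\acip^a)$ is forced to $0$. These are standard saddle--node intermittency (Manneville--Pomeau-type) estimates, but the uniformity, both in $\delta$ and across the freely available Misiurewicz combinatorics away from the channel, is the delicate point; it is exactly this uniform analysis that \cite{DobTod} provides.
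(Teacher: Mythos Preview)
The paper does not prove this statement; it is recorded as a Fact and cited from \cite[Theorem~1.30]{DobTod}, so there is no proof in the paper to compare against.

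Your first paragraph contains a genuine error. You argue that non-renormalisability is open near $a_0$: since $a_0$ is neither parabolic nor pre-Chebyshev, it is not the boundary of any single renormalisation window, and from this you infer an interval $(a_0-\eta, a_0+\eta)$ of non-renormalisable parameters. This inference fails. Renormalisation windows are dense in $[-2,a_F]$: hyperbolic parameters are dense \cite{GraSwi,Lyu}, and every hyperbolic parameter with positive topological entropy lies in a renormalisation window. Hence no open subinterval of $[-2,a_F]$ consists solely of non-renormalisable parameters. Indeed your own construction contradicts the claim --- in the very next paragraph you place a hyperbolic window, and hence a renormalisation window, inside $(a_0-\eta,a_0+\eta)$. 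In particular, on the entropy-varying side of your parabolic $a_*$ there are infinitely many further renormalisation windows accumulating on $a_*$, and you have given no argument that the Misiurewicz parameter you select there is non-renormalisable. The gap is repairable (non-renormalisable Misiurewicz parameters are dense in $\V$), but not by the argument you wrote.

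Past this, your intermittency heuristic is plausible, but you yourself concede that the substantive uniform density and return-time estimates --- the actual content --- are deferred to \cite{DobTod}, which is precisely the source being cited for the Fact. So the proposal is a motivation for the cited result rather than an independent proof of it.
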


    \begin{prop} \label{prop:smallh}
        Given $\eps >0$, 
        there exists a positive measure set $A$ of non-renormalisable Collet-Eckmann parameters with $0 < \la(a) = \chi(\acip^a) < \eps$ for all $a \in A$. 
    \end{prop}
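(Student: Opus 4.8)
The plan is to locate a non-renormalisable Misiurewicz parameter at which the acip has small entropy --- such parameters are furnished by \rfact{:DT30} --- and then run the Benedicks--Carleson--Tsujii parameter-exclusion scheme around it, the point being that smallness of $\chi(\acip)$ is inherited by a positive-measure set of nearby parameters. Starting from the non-renormalisable Misiurewicz parameter $a_0 = -2$ (for which $\acip^{-2}$ exists), \rfact{:DT30} applied with $\eps/2$ produces a non-renormalisable Misiurewicz parameter $a_1$ with $\acip^{a_1}$ existing and
$$0 < \chi(\acip^{a_1}) = h(\acip^{a_1}) < \eps/2 .$$

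I would then check that $a_1$ meets the hypotheses required to apply \cite[Theorem~1]{Tsu}, exactly as for a general element of $E$ in this section: the critical orbit of $f_{a_1}$ is non-recurrent, so $\omega(0)$ is a uniformly hyperbolic set avoiding the critical point and $a_1$ is Collet--Eckmann; its critical orbit is (trivially) slowly recurrent; and transversality holds by Levin's criterion, since $\sum_k |(f_{a_1}^k)'(a_1)|^{-1} < \infty$ by the Collet--Eckmann property. Tsujii's theorem (as already used for \rfact{Ewr}) then makes $a_1$ a Lebesgue density point of a set $P$ of Collet--Eckmann, slowly recurrent parameters each satisfying~\eqref{equWR}; after discarding a null set and invoking \rprop{Quadwr} we may moreover assume that for every $a \in P$ the measure $\acip^a$ exists and $\la(a) = \chi(\acip^a)$. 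Since the construction keeps the itinerary of the critical point of $f_a$ equal to that of the non-renormalisable map $f_{a_1}$ for a number of iterates tending to infinity as $a \to a_1$, the parameters in $P$ are non-renormalisable (a standard feature of this scheme near a non-renormalisable base parameter); thus each $a \in P$ is a non-renormalisable Collet--Eckmann parameter with $0 < \la(a) = \chi(\acip^a)$, and it remains only to bound $\chi(\acip^a)$ from above near $a_1$.

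For that, I would appeal to statistical stability of the acip at the Misiurewicz parameter $a_1$: the parameter-exclusion scheme supplies $P$ with uniform Collet--Eckmann and recurrence estimates, from which the densities $d\acip^a/d\mathrm{Leb}$ converge in $L^1$ to $d\acip^{a_1}/d\mathrm{Leb}$ as $P \ni a \to a_1$, and the contribution of the logarithmic singularity of $\log|f_a'|$ near the critical point to $\int \log|f_a'|\,d\acip^a$ is uniformly small (this is exactly what slow recurrence controls). Hence $\chi(\acip^a) = \int \log|f_a'|\,d\acip^a \to \chi(\acip^{a_1}) < \eps/2$ as $P \ni a \to a_1$. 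Equivalently, writing $\Lambda_N(a) := \frac1N \int \log|(f_a^N)'|\,d\mathrm{Leb}$, which depends continuously on $a$ because $\log|f_a'(y)| = \log 2 + \log|y|$ does not involve $a$, one has $\Lambda_N(a) \to \chi(\acip^a)$ uniformly on a neighbourhood of $a_1$ in $P$, so $a \mapsto \chi(\acip^a)$ is continuous there. In either case one obtains $\delta > 0$ such that $A := P \cap (a_1 - \delta, a_1 + \delta)$ satisfies $\chi(\acip^a) < \eps$ for all $a \in A$; since $a_1$ is a density point of $P$, the set $A$ has positive Lebesgue measure, and by construction it consists of non-renormalisable Collet--Eckmann parameters with $0 < \la(a) = \chi(\acip^a) < \eps$, which is the assertion.

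I expect the main obstacle to be the statistical-stability step: one must use the uniform expansion and recurrence bounds coming out of the parameter-exclusion construction to upgrade the (weak) convergence $\acip^a \to \acip^{a_1}$ into convergence of the Lyapunov exponents $\chi(\acip^a) \to \chi(\acip^{a_1})$ on a positive-density subset of $P$, the only real subtlety being that the contribution of the logarithmic singularity of $\log|f_a'|$ at the critical point must be shown to be uniformly small --- precisely the effect that Tsujii's weak regularity condition~\eqref{equWR} is designed to capture.
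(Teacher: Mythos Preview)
Your approach is essentially the paper's: use \rfact{:DT30} to locate a non-renormalisable Misiurewicz parameter $a_1$ with $h(\acip^{a_1})<\eps/2$, run the Benedicks--Carleson--Tsujii scheme around $a_1$ to obtain a positive-measure set of nearby Collet--Eckmann parameters, invoke \rprop{Quadwr} to get $\la(a)=\chi(\acip^a)$ almost everywhere on that set, and argue non-renormalisability from the construction. The one difference lies exactly where you flag the obstacle: rather than sketching a statistical-stability argument, the paper cites Tsujii's 1995 paper~\cite{Tsu95} directly --- its Main Theorem furnishes the positive-measure set $A'$ (with $a_1$ a density point) on which $\acip^a$ exists and converges weakly to $\acip^{a_1}$, and inequality~(2.1) there gives the upper semicontinuity $\limsup_{a\to a_1} h(\acip^a)\le h(\acip^{a_1})$, which (since $h(\acip)=\chi(\acip)$) is precisely the upper bound $\chi(\acip^a)<\eps$ you need on a small enough ball.
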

    \begin{proof}
        By Fact~\ref{fact:DT30}, there exists a non-renormalisable Misiurewicz parameter $a_1$ with $h(a_1) > \frac{\log 2}2$ and $h(\acip^{a_1}) \in (0,\eps/2)$. 
        By \cite[Main Theorem]{Tsu95}, there is a positive measure set $A'$ of parameters, with $a_1$ as a density point, such that, for any sequence $a_p \to a_1$, $\acip^{a_p}$ exists (and is necessarily unique) and converges to $\acip^{a_1}$, while by \cite[Inequality~(2.1)]{Tsu95},
    $$
    \limsup_{a_p\to {a_1}} h(\acip^{a_p}) \leq h(\acip^{a_1}).$$
    Entropy equals Lyapunov exponent for acips. For $r$ small enough and $A_r = A'\cap B(a_1,r)$, we deduce 
    $$
    0 < \chi(\acip^{a}) < \eps$$
    for all $a \in  A_r$. By \rprop{Quadwr}, for almost every $a \in A_r$, $\la(a) = \chi(\acip^a)$.

    From Tsujii's (and Benedicks-Carleson's) construction, for parameters in $A'$, there are arbitrarily small neighbourhoods of $0$ mapped to some fixed large scale. On the other hand, $f_{a_1}$ is non-renormalisable, so for nearby renormalisable parameters, the restrictive intervals are very small and do not get mapped to the fixed large scale. Therefore, they are not in $A'$. Hence, taking $r$ small enough, all parameters in the set $A_r$ are non-renormalisable. Set $A = A_r$ to complete the proof. 
\end{proof}

\section{Typical values}
\label{sectTyp}
For large sets of parameters, one can compare the Lyapunov exponent of an invariant measure with the Lyapunov exponent along the critical orbit. This allows one to estimate $\frac{h(a)}{\la(a)}$ on large sets and constitutes the goal of this section.

\subsection{Visible measures of maximal entropy} \label{S:Vmme}
The most interesting dynamical measures for a unimodal map are the measure of maximal entropy $\mme$ and the absolutely continuous invariant probability measure $\acip$ (supposing the latter exists). 
To prove Theorem~\ref{thm:Zdun}, it remains to show that if $\mme^a = \acip^a$ for a quadratic map $f_a : x \mapsto x^2 +a$, then $a = -2$. 
Recall \rdef{PreC} of pre-Chebyshev maps. 
\begin{prop}
    The only pre-Chebyshev quadratic map $f_a$ is $$f_{-2} : x \mapsto x^2-2.$$ 
\end{prop}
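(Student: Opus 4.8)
The plan is to treat the two cases $m=0$ and $m\ge 1$ of \rdef{PreC} separately. For $m=0$ the argument is immediate: such an $f_a$ is non-renormalisable and smoothly, hence topologically, conjugate to $T_2:x\mapsto 1-2|x|$ on $(-1,1)$, so $\htop(f_a)=\log 2$. Since $h$ is monotone and, for every $a>-2$, the critical value $f_a(0)=a$ lies strictly inside the dynamical interval $I_a$ (so $f_a$ is not surjective and $h(a)<\log 2$), one has $h^{-1}(\log 2)=\{-2\}$, whence $a=-2$. Conversely $f_{-2}$ is pre-Chebyshev with $m=0$: it is non-renormalisable because $2>\sqrt 2$, and, as recorded by von Neumann and Ulam \cite{vNU}, $y\mapsto -2\sin(\pi y/2)$ is a real-analytic conjugacy from $T_2$ on $(-1,1)$ to $f_{-2}$ on $(-2,2)$, so $f_{-2}$ is smoothly conjugate to $T_2$.

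For $m\ge 1$ I would argue by contradiction. Suppose $f_a$ is pre-Chebyshev with $m\ge 1$; writing $J$ for the $m$-th restrictive interval, $g:=f_a^{2^m}\big|_{\overline J}$ is smoothly conjugate to $T_2$ and hence, composing with $y\mapsto -2\sin(\pi y/2)$, smoothly conjugate to $f_{-2}\big|_{[-2,2]}$. Since $f_a$ is a polynomial, $g$ complexifies: by the complex bounds for finitely renormalisable real quadratics (Douady--Hubbard), $f_a^{2^m}$ restricts to a quadratic-like map $\mathbf R:U'\to U$ on complex neighbourhoods of $\overline J$ whose real trace extends $g$, while $f_{-2}$ restricts to a quadratic-like map on any disc containing $[-2,2]$. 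The key step is to invoke Inou's theorem \cite{Inou} on extensions of conjugacies of polynomial-like maps: the smooth conjugacy between the real traces extends to a \emph{conformal} conjugacy between $\mathbf R$ and $f_{-2}$ on neighbourhoods of their filled Julia sets (a hybrid conjugacy that is holomorphic along $\mathbb R$). In particular $\mathbf R$ straightens to $f_{-2}$ and, being holomorphic, the conjugacy preserves the multiplier of every periodic point \emph{exactly}.

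It then remains to show that this exact multiplier matching is incompatible with $f_a$ being renormalisable. For $f_{-2}\big|_{[-2,2]}$ the dividing (boundary) fixed point is $x=2$, with multiplier $f_{-2}'(2)=4$. On the other side, the dividing fixed point of $g$ is a periodic point of $f_a$ produced by the renormalisation tower: for $m=1$ it is the orientation-reversing fixed point $\alpha_a=\tfrac12\bigl(1-\sqrt{1-4a}\bigr)$ of $f_a$, with $g$-multiplier $\bigl(f_a'(\alpha_a)\bigr)^2=(2\alpha_a)^2=4\alpha_a^2$, so the matching forces $\alpha_a=-1$, i.e.\ $a=-2$ — but $f_{-2}$ is not renormalisable, a contradiction. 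For $m\ge 2$ one descends through the intermediate renormalisations $R^kf_a$, $0\le k<m$, each a real-analytic unimodal map whose period-two renormalisation is, by the same Inou argument, conformally conjugate to $f_{-2}$, applying the multiplier matching at each level; combined with the rigidity of quadratic polynomials (hybrid-equivalent quadratics are affinely conjugate), this pins $a$ to a parameter whose renormalisation data cannot coincide with that of $f_{-2}$ unless $a=-2$, again contradicting renormalisability.

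I expect the main obstacle to be exactly the Inou step — converting the small-scale smooth rigidity of the renormalisation into a genuinely holomorphic conjugacy of polynomial-like maps — together with organising the descent through the $m$ renormalisation levels and the bookkeeping of multipliers along the way; by comparison the case $m=0$ and the verification that $f_{-2}$ is itself pre-Chebyshev are routine.
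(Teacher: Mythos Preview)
There is a genuine gap at the crucial step. You invoke Inou's theorem to upgrade the smooth real conjugacy between $g=f_a^{2^m}|_J$ and $f_{-2}$ to a \emph{conformal} conjugacy of the quadratic-like extensions, but that is not what Inou's result says. Inou's theorem \cite{Inou} takes as \emph{input} a conformal equivalence between polynomial-like restrictions of entire maps and outputs that the global degrees agree; it does not manufacture a holomorphic conjugacy from a $\cC^\infty$ one on the real trace. A smooth real conjugacy between two holomorphic maps need not extend holomorphically: this is precisely the hard rigidity statement, and you have assumed it rather than proved it. Your parenthetical ``a hybrid conjugacy that is holomorphic along $\mathbb R$'' does not help --- being conformal along a one-dimensional set places no constraint on a quasiconformal map.

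In the paper the passage from smooth to conformal is achieved by an entirely different mechanism: the filled Julia set of the quadratic-like renormalisation is the real interval $J$, so harmonic measure on it is equivalent to Lebesgue, which (by the pre-Chebyshev hypothesis) is equivalent to the measure of maximal entropy; the Zdunik / Popovici--Volberg dichotomy \cite{Zdu, PopVol} then forces conformal equivalence with a genuine polynomial, necessarily $z\mapsto z^2-2$. Only \emph{after} this does Inou's theorem enter, to compare the global degrees $2^{2^m}$ and $2$ and conclude $m=0$. Your $m=0$ argument via $h^{-1}(\log 2)=\{-2\}$ is fine, and your multiplier idea for $m=1$ would work \emph{once} the conformal conjugacy is in hand (since then the boundary fixed point is included), but the $m\ge2$ descent is too vague as written and in any case rests on the same unproven extension. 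The missing ingredient is the harmonic-measure dichotomy, not Inou.
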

\begin{proof}
We shall need  a \emph{quadratic-like} extension for the renormalised map. A quadratic-like map is a holomorphic proper map $g : U \to V \subset \C$ between topological discs with $\overline{U}$ compactly contained in $V$ and $g$ of degree $2$. 

By \cite[Theorem~5.20, p204]{JiangBook},
    if $f$ is a renormalisable quadratic map with restrictive interval $J$ and renormalisation $f^n:J \to J$, then $f$ viewed as a complex map $f : \C \to \C$ is renormalisable. The renormalisation is a quadratic-like map $f^n : U \to V$  with $U$ a complex neighbourhood of $J$. 

For a pre-Chebyshev quadratic map $f$, the final renormalisation $f^n : J \to J$ is smoothly conjugate on the interior of $J$ to $x \mapsto 1-2|x|$. The corresponding quadratic-like map has Julia set equal to $J$, since $J$ is backward-invariant. Harmonic measure on $J \subset U$ is equivalent to Lebesgue measure, itself equivalent to the measure of maximal entropy. 

There is extra rigidity in analytic setting which comes from a dichotomy of 
Zdunik and Popovici and Volberg concerning harmonic measure \cite{Zdu, PopVol}: for a polynomial-like map, either the measure of maximal entropy and the harmonic measure are mutually singular or the map is conformally equivalent to a polynomial (on some neighbourhoods of the maps' Julia sets). In particular, $f^n : U \to V$ is conformally equivalent on $U$ to $z \mapsto z^2-2$ on a corresponding neighbourhood of $[-2,2]$. 

With Inou's \cite[Theorem~1]{Inou}, we go a step further. Since $f$ and $g : z \mapsto z^2 -2$ are entire and have bounded degree and $f^n : U\to V$ and $g$ are conformally equivalent, $f^n$ and $g$ have the same degree on $\C$. Therefore $n=1$ and  $f = g$.  
\end{proof}

\noindent\textbf{Proof of Theorem~\ref{thm:Zdun}.} Combining the above proposition with the following fact completes the proof of Theorem~\ref{thm:Zdun}. 
 \begin{fact}[{\cite[Theorem~2]{DobMME}}] \label{factDobMME}
Given an S-unimodal map $g$ with positive entropy, the measure of maximal entropy is absolutely continuous if and only if $g$ is pre-Chebyshev.
\end{fact}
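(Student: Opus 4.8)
The statement is the two-sided characterisation of \cite[Theorem~2]{DobMME}, so the plan is to handle sufficiency (``$g$ pre-Chebyshev $\Rightarrow \mme$ absolutely continuous'') and necessity separately; sufficiency is an explicit construction, and necessity is the substantial half.

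For sufficiency, suppose $g$ is pre-Chebyshev with restrictive interval $J$ of period $2^m$ for the final renormalisation and a smooth conjugacy $\psi$ between $g^{2^m}|_J$ and the full tent map $S : x \mapsto 1-2|x|$. First I would record that normalised Lebesgue measure is $S$-invariant and, by Rokhlin's formula, has entropy $\log 2 = \htop(S)$, hence by uniqueness is the measure of maximal entropy of $S$; pulling it back by $\psi$ gives an absolutely continuous mme $\nu$ of $g^{2^m}|_J$. Then I would spread $\nu$ around the cycle $\hat J = \bigcup_{i<2^m} g^i(J)$ by $\mme := 2^{-m}\sum_{i<2^m}(g^i|_J)_*\nu$, check $g$-invariance using invariance of $\nu$, and use Abramov's formula together with the iterated identity $\htop(\hat g)=2^m\htop(g)$ to get $h(g,\mme)=\htop(g)$, so $\mme$ is the mme of $g$. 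Absolute continuity of each summand follows by factoring $g^i|_J$ as the quadratic fold $g|_J$ — which turns an absolutely continuous measure into one with a locally integrable $|x-g(c)|^{-1/2}$ singularity — followed by diffeomorphic monotone branches avoiding the critical point.

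For necessity, assume $\mme$ is absolutely continuous. Ruelle's inequality gives $\chi(\mme)\ge h(\mme)=\htop(g)>0$, and Ledrappier's equality criterion on the interval (absolute continuity being equivalent to $h=\chi$) then forces $\chi(\mme)=\htop(g)$. Next I would reduce to the non-renormalisable case: if $g$ is renormalisable of period $n$ with restrictive interval $J$, uniqueness of the mme and Abramov's formula identify $\mme(J)^{-1}\mme|_J$ with the mme of the S-unimodal map $g^n|_J$, which inherits absolute continuity; since the product $n_0$ of all renormalisation periods obeys $n_0\htop(g)=\htop(g^{n_0}|_{J})\le\log2$, only finitely many renormalisations occur and one lands on a non-renormalisable S-unimodal $g_*$ of positive entropy with absolutely continuous mme. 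For $g_*$ the canonical semiconjugacy $\phi$ to $T_b$, $b=e^{\htop(g_*)}$, is in fact a conjugacy (a periodic $T_b$ would make $g_*$ renormalisable), and $\mme_{g_*}=\phi^{*}\mme_{T_b}$; since a singular homeomorphism carries a Lebesgue-full set onto a Lebesgue-null set, $\phi$ must be bi-absolutely continuous. The crux is then to upgrade this to smoothness of $\phi$ and to conclude that a smooth conjugacy between an S-unimodal map with non-degenerate critical point and a tent map is possible only when $b=2$, with $g_*$ smoothly conjugate to $S$; running the renormalisation reduction backwards (and verifying that each intermediate renormalisation period was forced to be two) then gives that $g$ is pre-Chebyshev.

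The main obstacle is the rigidity step inside the necessity argument: promoting ``$\phi$ absolutely continuous'' to ``$g_*$ smoothly conjugate to the full tent map''. One has to exclude singular, and merely-absolutely-continuous-but-not-smooth, conjugacies; I would attempt this via the negative Schwarzian derivative and the Koebe principle (cf.\ \cite{deMvS}) to control the comparative geometry of pull-backs of the Markov partition of $T_b$, matched against the square-root behaviour of $g_*$ at its critical point, the two being compatible only in the Chebyshev case. A lighter, bookkeeping obstacle is confirming that every renormalisation period produced in the reduction must be two, so the conclusion is genuinely ``pre-Chebyshev'' rather than just ``renormalises to the full tent map''.
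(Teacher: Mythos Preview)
The paper does not prove this statement; it is quoted as \cite[Theorem~2]{DobMME} and used as a black box in the proof of Theorem~\ref{thm:Zdun}. There is no in-paper argument to compare your proposal against.

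On your outline itself: sufficiency is fine. In the necessity direction, your renormalisation reduction is misstated. You assert that for a renormalisation of any period $n$ one has $\mme(J)^{-1}\mme|_J$ equal to the mme of $g^n|_J$ and that $n_0\htop(g)=\htop(g^{n_0}|_{J})$. Both fail when $n\ge 3$: the mme of $g$ is then supported on the top Jonker--Rand stratum $\Omega_1$, a uniformly expanding Cantor set disjoint from the orbit of $J$ and of Lebesgue measure zero, so $\mme(J)=0$ and the second identity is a strict inequality. The correct mechanism is that absolute continuity of $\mme$ is \emph{directly} incompatible with a non-Feigenbaum renormalisation, precisely because $\Omega_1$ would then be Lebesgue-null; this is what forces every period to be $2$. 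That is the content of your ``lighter bookkeeping obstacle'', and it is not bookkeeping --- without it your reduction does not go through.

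The rigidity step you flag is indeed the crux, and your Koebe/negative-Schwarzian suggestion is too vague to count as an argument. The method of \cite{DobMME} is different: one uses Pesin theory to build an expanding induced Markov map for $g_*$ and the matching one for $T_b$; the acips for these induced systems have smooth densities (normalised Lebesgue on the tent side), so $\phi$ is a diffeomorphism on some subinterval; one then propagates smoothness through the conjugacy relation $\phi\circ g_*=T_b\circ\phi$, with singularities confined to the post-critical orbit, and matches the non-degenerate quadratic critical point against the tent kink to force $b=2$. None of this mechanism is present in your proposal.
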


    \subsection{Comparing topological entropy and Lyapunov exponents}
    In this section, we consider the quadratic family $f_a$, $a \in \ip$, and the entropy function $h : a \mapsto \htop(f_a)$. 

Given a piecewise-smooth map $g$, we define 
    $\M(g) $ as the set of ergodic, $g$-invariant, probability measures with non-negative Lyapunov exponent.
    We have a pressure function 
    $$ P_g(t) := \sup_{\mu \in \M(g)} \{h(\mu) - t\chi(\mu)\}.$$
    For $g=f_a$, we write $P_a$ for $P_{f_a}$.
    As a supremum of lines (of slope $-\chi(\mu)$), the pressure is Lipschitz, convex and decreasing. A measure realising the supremum for $t$ is an \emph{equilibrium measure} for the parameter $t$.

    \begin{fact}\label{facts1}
        Let $g$ be a $\cC^2$ unimodal map with positive entropy and suppose $\mu \in \M(g)$.
        \begin{enumerate}
        \item
            \label{enum:facts11}
            $h(\mu) \leq \chi(\mu)$; 
        \item
            \label{enum:facts12}
            if $\chi(\mu) >0$ then
            $$
            h(\mu) = \chi(\mu) \iff \mu \mbox{ is an acip};$$
        \item
            \label{enum:facts14}
            if $\chi(\mu) >0$ then
            $\hd(\mu) = \frac{h(\mu)}{\chi(\mu)}$;
        \item
            \label{enum:facts141}
        $P_g(1) \leq 0$. 
        \end{enumerate}
    \end{fact}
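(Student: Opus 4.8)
The first and last items are soft consequences of the one-dimensional Ruelle inequality, while the two middle items I would reduce to cited results, so no new dynamics is required. For item~(1): since $\mu$ is ergodic, its Lyapunov exponent equals $\chi(\mu)$ at $\mu$-a.e.\ point, and $\chi(\mu)$ is finite --- bounded above by $\log\|g'\|_\infty$ and, as $\mu\in\M(g)$, nonnegative. The Ruelle inequality for $\cC^1$ interval maps gives $h(\mu)\le\max\{\chi(\mu),0\}$, which under the hypothesis $\chi(\mu)\ge0$ is precisely $h(\mu)\le\chi(\mu)$. The one point worth a sentence is that, because the non-flat critical point is a zero of $g'$, one must use the sharp form of the inequality (with $\max\{\chi(\mu),0\}$) rather than the cruder bound $h(\mu)\le\int\log^+|g'|\,d\mu$; this sharp form is classical for smooth interval maps and also disposes of the case $\chi(\mu)=0$, where it forces $h(\mu)=0$. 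Item~(4) is then immediate: by definition $P_g(1)=\sup_{\mu\in\M(g)}\{h(\mu)-\chi(\mu)\}$, and item~(1) makes every term nonpositive.

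For item~(2), the ``if'' direction is Rokhlin's formula: an acip $\mu$ of the $\cC^2$ unimodal map $g$ satisfies $h(\mu)=\int\log|g'|\,d\mu=\chi(\mu)$. For the ``only if'' direction I would assume $h(\mu)=\chi(\mu)$ with $\chi(\mu)>0$, so that $\mu$ is a hyperbolic ergodic measure attaining equality in the Ruelle inequality, and then invoke Ledrappier's theorem \cite{Led} (see also \cite{BloLyu}), which asserts precisely that such a measure is absolutely continuous.

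For item~(3), I would quote the one-dimensional dimension formula: an ergodic measure of a $\cC^2$ non-flat interval map with $\chi(\mu)>0$ has Hausdorff dimension $h(\mu)/\chi(\mu)$ (indeed it is exact-dimensional). The mechanism is standard --- Shannon--McMillan--Breiman controls the $\mu$-mass of dynamical $n$-cylinders, $\mu(\mathrm{cyl}_n(x))\approx e^{-nh(\mu)}$, while bounded distortion along the orbit (legitimate because $\chi(\mu)$ is finite, hence $\log|g'|\in L^1(\mu)$, and the orbit spends controllably little time near $0$) identifies these cylinders with intervals of length $\approx e^{-n\chi(\mu)}$ --- so I would cite it rather than reprove it. The main obstacle of the whole statement is thus external: Ledrappier's equality characterisation (the ``only if'' of item~(2)) and the dimension formula (item~(3)) are the substantive theorems; the only step needing care by hand is securing the sharp one-dimensional Ruelle inequality in item~(1), so that the critical point does not weaken $h(\mu)\le\chi(\mu)$ to the useless $h(\mu)\le\int\log^+|g'|\,d\mu$.
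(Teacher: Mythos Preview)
Your proposal is correct and follows essentially the same approach as the paper: both simply cite the classical results --- Ruelle's inequality for item~(1), Ledrappier/Pesin for item~(2), the dynamical volume lemma for item~(3), and deduce item~(4) from item~(1) and the definition of pressure. The paper's proof is even terser (one line per item, just naming the reference), whereas you add useful commentary about the sharp form of Ruelle's inequality near the critical point and the Shannon--McMillan--Breiman heuristic for the dimension formula; this is extra exposition rather than a different argument.
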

    \begin{proof}
        \ref{enum:facts11}.\ is Ruelle's Inequality \cite{Rue} (note that $\chi(\mu) \geq 0$ by definition of $\M(f)$); \ref{enum:facts12}.\ is Pesin's formula \cite{Led}; \ref{enum:facts14}.\ is the Dynamical Volume Lemma \cite{HofRai, DobCusp}; 
        \ref{enum:facts141}.\ follows from the definition of pressure and~\ref{enum:facts11}. 
    \end{proof}
    As a consequence of Theorem~\ref{thm:Zdun} and Fact~\ref{facts1} we obtain the following inequalities.
    \begin{cor}
            \label{cor:facts123}
            If $a \in (2,a_F)$ and $\acip^a$ exists, then 
            $$\chi(\acip^a) = h(\acip^a) < h(a) = h(\mme^a) < \chi(\mme^a).$$
        \end{cor}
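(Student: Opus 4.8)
The plan is to chain together the inequalities from the two ingredients just established. First I would recall that for a quadratic map $f_a$ with $a \in [-2,a_F)$, the measure of maximal entropy exists, is unique, and satisfies $h(\mme^a) = h(a) = \htop(f_a)$, as cited from \cite{Rai}. Applying Fact~\ref{facts1}\ref{enum:facts11} to $\mu = \mme^a$ gives $h(\mme^a) \leq \chi(\mme^a)$; to upgrade this to a strict inequality $h(\mme^a) < \chi(\mme^a)$, I would invoke Fact~\ref{facts1}\ref{enum:facts12}: equality would force $\mme^a$ to be an acip, hence (by uniqueness of the acip when it exists, together with uniqueness of $\mme^a$) it would give $\mme^a = \acip^a$, and then Theorem~\ref{thm:Zdun} forces $a = -2$, contradicting $a \in (-2,a_F)$. (I note the statement of the corollary as written has ``$a \in (2,a_F)$'', which should read $(-2,a_F)$; the argument needs precisely that $a \neq -2$.)

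Next I would handle the left end of the chain. If $\acip^a$ exists, then $\acip^a \in \M(f_a)$ with $\chi(\acip^a) > 0$ (recall an acip for an S-unimodal map has positive Lyapunov exponent, as stated in the excerpt: $\chi(\acip) = h(\acip) > 0$). By Fact~\ref{facts1}\ref{enum:facts12} applied to $\mu = \acip^a$, we get $h(\acip^a) = \chi(\acip^a)$, which is the first equality of the corollary. For the strict inequality $h(\acip^a) < h(\mme^a)$, observe that $\mme^a$ is the unique measure of maximal entropy while $\acip^a \neq \mme^a$ (again by Theorem~\ref{thm:Zdun}, since $a \neq -2$); hence $h(\acip^a) < h(\mme^a)$ by uniqueness of the maximising measure. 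Assembling: $\chi(\acip^a) = h(\acip^a) < h(\mme^a) = h(a) < \chi(\mme^a)$.

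The main (and really the only) obstacle is ensuring the strict inequalities, and all of that weight rests on Theorem~\ref{thm:Zdun}: without it one only gets the non-strict chain $\chi(\acip^a) = h(\acip^a) \leq h(a) = h(\mme^a) \leq \chi(\mme^a)$ from Fact~\ref{facts1} alone. A secondary point requiring care is the uniqueness of $\acip^a$ (when it exists) and of $\mme^a$, both of which are cited earlier in the excerpt (\cite{BloLyu, Led} for the acip, \cite{Rai} for $\mme$), so that ``$\mme^a = \acip^a$'' is an unambiguous statement to which Theorem~\ref{thm:Zdun} applies. Everything else is a formal concatenation.
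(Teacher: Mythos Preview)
Your proposal is correct and follows precisely the route the paper indicates: the paper states only that the corollary is ``a consequence of Theorem~\ref{thm:Zdun} and Fact~\ref{facts1}'', and you have simply unpacked how those two ingredients combine (Ruelle and Pesin from Fact~\ref{facts1} for the equalities and non-strict inequalities, Theorem~\ref{thm:Zdun} plus uniqueness of $\mme^a$ and $\acip^a$ to make both inequalities strict). Your observation about the typo $(2,a_F)$ for $(-2,a_F)$ is also correct.
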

        In~\rsec{Intro} we set
    \begin{eqnarray*}
        X :=& \{a\in \V \cap \JWR  : \la(a) = \chi(\acip^a)\},\\ 
        Y :=& \{a\in \V \cap \JWR  : \la(a) = \chi(\mme^a)\} 
    \end{eqnarray*}
    and noted that  $-2 \notin X \cup Y.$
    By the previous corollary, for $a \in X$, $h(a) > \la(a)$, while for $a \in Y$, $h(a) < \la(a)$. 
In the sequel, we gather the necessary tools to obtain uniformity in these inequalities. Convergence of maps will be with respect to the sup-norm. 
\begin{prop} \label{prop:presconv}
    Let $(f_k)_{k\geq 1}$ be a convergent sequence  of maps with limit $f_0$, and suppose each $f_k$, $k \geq 0$, is an S-unimodal map with positive entropy and with derivative bounded by some $M>0$. 
    For each $t > -\htop(f_0)/{2 \log M}$ such that $\inf_{k \geq 0}P_{f_k}(t) > 0$ 
    	$$P_{f_0}(t) = \lim_{k\to\infty} P_{f_k}(t).$$
\end{prop}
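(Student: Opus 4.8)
\textbf{Proof proposal for Proposition~\ref{prop:presconv}.}

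The plan is to establish the two inequalities $\limsup_k P_{f_k}(t) \le P_{f_0}(t)$ and $\liminf_k P_{f_k}(t) \ge P_{f_0}(t)$ separately, using different tools for each direction. The upper semicontinuity of $\mu \mapsto h(\mu)$ (on the space of invariant measures for a fixed map) does not directly apply here since the maps vary, so I would instead exploit the connection between the pressure function and the entropy/Lyapunov-exponent data encoded combinatorially. Recall that for an S-unimodal map $g$ with positive entropy, $P_g(t)$ is intimately related to the kneading data via the transfer operator / induced map formalism, and that $P_g(1) \le 0$ by Fact~\ref{facts1}. The hypothesis $t > -\htop(f_0)/(2\log M)$ together with $\inf_k P_{f_k}(t) > 0$ is there to keep us in the ``expanding'' regime where the pressure is realised by nice (hyperbolic, exponentially recurrent) measures and the thermodynamic formalism is well-behaved; in particular it rules out the pressure being carried off to a measure of zero Lyapunov exponent or to the critical point.

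For the lower bound $\liminf_k P_{f_k}(t) \ge P_{f_0}(t)$: given $\eta > 0$, pick an ergodic measure $\mu \in \M(f_0)$ with $h(\mu) - t\chi(\mu) > P_{f_0}(t) - \eta$. If $\chi(\mu) > 0$, one can approximate $\mu$ from within by measures supported on hyperbolic repelling periodic orbits (or on compact hyperbolic invariant sets bounded away from the critical point) — this is standard for interval maps with an expanding part, using Katok-type closing / Markov tower arguments. Such a periodic orbit for $f_0$ persists and varies continuously for $f_k$ with $k$ large (hyperbolic continuation), and both its entropy contribution and its Lyapunov exponent converge. Hence $P_{f_k}(t) \ge h(\mu_k) - t\chi(\mu_k) \to h(\mu) - t\chi(\mu) > P_{f_0}(t) - \eta$, giving the bound as $k \to \infty$. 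The case where the near-optimal measure has $\chi(\mu) = 0$ (e.g. a measure at a parabolic point, or a measure with zero exponent) is excluded, or made negligible, precisely by the lower bound on $t$ and on $\inf_k P_{f_k}(t) > 0$: a zero-exponent measure contributes $h(\mu) \le 0$ by Ruelle, so it cannot come close to realising a positive pressure.

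For the upper bound $\limsup_k P_{f_k}(t) \le P_{f_0}(t)$: here I would use the characterisation of $P_g(t)$ as the exponential growth rate of a weighted count of ``laps'' or of inverse branches of $g^n$ — roughly, $P_g(t) = \lim_n \frac1n \log \sum_{\text{branches}} \|(g^n)'|^{-t}$ over the monotone branches of $g^n$, valid in the expanding regime (for $t$ in the relevant range). This combinatorial/metric quantity depends continuously, in fact uniformly, on the map in the $C^1$ topology on any compact portion of phase space away from the critical point, and the contribution of branches near the critical point is controlled by the non-degeneracy of the critical point (the $|g'|^{-1/2}$-convexity) uniformly along the convergent sequence; the condition $t > -\htop(f_0)/(2\log M)$ is exactly what guarantees this near-critical contribution is subexponentially negligible. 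Letting $n$ be large but fixed and then $k \to \infty$ yields $\limsup_k P_{f_k}(t) \le \frac1n \log(\cdots)_{f_0} + o(1) \to P_{f_0}(t)$.

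I expect the main obstacle to be the upper bound, and specifically making rigorous that the contribution of inverse branches passing close to the critical point is uniformly (in $k$) subexponential: one needs an estimate, uniform along the convergent family, saying that the $t$-conformal mass of deep combinatorial pieces decays, and this is where the precise form of the hypothesis on $t$ and the S-unimodality (bounded distortion via negative Schwarzian, non-degenerate critical point) must be combined. An alternative, cleaner route for the upper bound would be to invoke an existing semicontinuity result for pressure of unimodal families from the thermodynamic-formalism literature cited earlier (the tools from \cite{DobTod}), phrasing the argument as: the equilibrium state for $f_k$ at parameter $t$ exists in this regime, is exponentially recurrent with uniform constants, and any weak-$*$ accumulation point of these equilibrium states is an $f_0$-invariant measure whose entropy does not drop in the limit (upper semicontinuity transferring across the $C^0$ convergence because the relevant Markov structures have uniformly bounded combinatorics) — this would sidestep the hands-on branch count entirely.
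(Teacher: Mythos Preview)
Your proposal diverges from the paper's proof, and while the lower-bound half via periodic approximation and hyperbolic continuation is fine, the upper-bound half misreads the role of the hypothesis on $t$ and would leave a gap. You claim that the bound $t>-\htop(f_0)/(2\log M)$ is ``exactly what guarantees the near-critical contribution is subexponentially negligible'' in a weighted branch-count for $P_{f_k}(t)$. But that bound bites only for \emph{negative} $t$, and for $t<0$ the weight $|g'|^{-t}$ is \emph{small} near the critical point (where $|g'|$ is small), so near-critical mass is not the issue there at all; near-critical blow-up of the weight is a concern for $t>0$, where your stated hypothesis says nothing. What actually threatens to fail for very negative $t$ is that the pressure gets carried by measures of large Lyapunov exponent and negligible entropy (the $-t\chi(\mu)$ term dominates), and your outline does not address that scenario.

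The paper's argument is much shorter and identifies the correct mechanism: the two hypotheses combine to give a \emph{uniform positive lower bound on the entropy of the equilibrium states} $\mu_t^{f_k}$. For $t>0$, any equilibrium measure satisfies $h(\mu)=P_{f_k}(t)+t\chi(\mu)\ge P_{f_k}(t)\ge\inf_k P_{f_k}(t)>0$. For $t\in[-\htop(f_k)/(2\log M),0]$ one has $P_{f_k}(t)\ge\htop(f_k)$, while supporting lines have slope $-\chi(\mu)\in[-\log M,0]$, so simple geometry forces $h(\mu_t^{f_k})\ge\htop(f_k)/2$; continuity of $\htop$ then makes this uniform in $k$. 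With this uniform entropy floor in hand, the paper simply invokes \cite[Lemma~13.1]{DobTod}, which delivers the pressure convergence directly. Your alternative route via weak-$*$ limits of equilibrium states is in spirit close to this, but the ``uniformly bounded combinatorics'' you gesture at is precisely what the uniform entropy bound buys, and you have not extracted that bound from the stated hypotheses.
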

\begin{proof}
    To apply a result in \cite{DobTod}, we will need a uniform lower bound on the entropy of the equilibrium states for $t, f_k$. Let $f := f_k$ for some $k\geq 0$. 

    For $t \leq 0$, $P_f(t) \geq \htop(f)$. For any measure $\mu$, $\chi(\mu) \leq \log M.$ Simple geometry implies that
    for $t \in \left[\frac{-\htop(f)}{2\log M}, 0\right]$, any equilibrium measure $\mu^f_t$ for $t, f$ must have entropy $h(\mu^f_t) \geq \htop(f)/2$. 
    Such equilibrium measures exist by upper-semicontinuity of metric entropy and Lyapunov exponents (for a fixed map).
    For smooth unimodal maps, $\htop$ depends continuously on the map \cite{MisJumps}.
    Thus for sufficiently  large $k$, 
    $$
    h(\mu^{f_k}_t) \geq \htop(f_0)/3.$$

    For $t >0$, $P_f(t) \geq \eps >0$ implies $h(\mu_t^f) \geq \eps$, if the equilibrium measure exists. It exists by \cite[Corollary~1.20]{DobTod}. 

    According to \cite[Remark~1.14]{DobTod}, for smooth unimodal maps,  the \emph{decreasing critical relations} hypothesis is unnecessary. 
    Applying \cite[Lemma~13.1]{DobTod}, the pressure converges as required. 
\end{proof}

\begin{prop}
    Given an S-unimodal map $f$ with positive entropy, the map $t \mapsto P_f(t)$ is real-analytic in a neighbourhood of $0$. 
    \end{prop}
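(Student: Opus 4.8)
The plan is to realise $P_f$, locally near $t=0$, as the unique zero of an analytic implicit equation produced by an inducing scheme, and then to invoke the analytic implicit function theorem.

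\emph{Localisation.} Since $P_f$ is continuous and decreasing (a supremum of affine functions) and $P_f(0)=\htop(f)>0$, there is $\eps_0>0$ with $P_f(t)\geq \htop(f)/2>0$ for all $|t|<\eps_0$. As in the proof of \rprop{:presconv} (using \cite[Corollary~1.20]{DobTod} and the elementary geometry of the pressure graph), for each such $t$ there is an equilibrium state $\mu_t$ for the potential $-t\log|f'|$, and, shrinking $\eps_0$ if necessary, $h(\mu_t)\geq \htop(f)/3>0$; moreover $\chi(\mu_t)\geq h(\mu_t)>0$ by Ruelle's inequality and $\chi(\mu_t)\leq\log M$ with $M=\sup|f'|$. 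Thus throughout $|t|<\eps_0$ we deal with measures whose entropy and Lyapunov exponent are bounded away from $0$ and from $\infty$.

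\emph{Inducing.} By the constructions of \cite{DobTod} (already used in this section), there is an interval $Y$ with $0\notin\overline Y$ and a full-branched Markov inducing map $F:=f^{\tau}:\bigcup_i Y_i\to Y$, each branch $F|_{Y_i}=f^{\tau_i}|_{Y_i}$ being a diffeomorphism onto $Y$ with uniformly large expansion and uniformly bounded distortion — the latter following from the convexity (non-positive Schwarzian) condition built into the S-unimodal hypothesis, via Koebe-type estimates. The equilibrium states $\mu_t$ lift to $F$-invariant measures on the associated countable full shift, and the induced geometric potential $\Phi_t:=-t\log|F'|$ has summable variations there (its oscillation over cylinders decays exponentially, by the distortion bounds), uniformly for $t$ in a neighbourhood of $0$.

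\emph{Implicit equation and conclusion.} For $(t,s)$ near $(0,\htop(f))$ set $G(t,s):=P_F(\Phi_t-s\tau)$, the Gur\-evich pressure of the induced system; by the above it is finite. By the Abramov--Kac relation between the pressure of $f$ and that of its inducing scheme, $G(t,s)=0$ iff $s=P_f(t)$ (liftability of the equilibrium states, from \cite{DobTod}, gives the non-trivial inequality), and $s\mapsto G(t,s)$ is strictly decreasing, so $P_f(t)$ is its unique zero. Since $\Phi_t-s\tau$ depends affinely on $(t,s)$ and the scheme is full-branched (so the big-images-and-preimages condition is automatic), the generalised Ruelle--Perron--Frobenius theory for countable Markov shifts provides a spectral gap for the relevant transfer operator on a suitable Banach space, whence $G$ is real-analytic in $(t,s)$ on the open set where it is finite. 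Finally $\partial_s G(t,s)=-\int\tau\,d\nu_{t,s}$, where $\nu_{t,s}$ is the equilibrium measure of $\Phi_t-s\tau$; this integral is finite and strictly negative precisely because the projected measure has positive entropy, i.e.\ $h(\mu_t)>0$, which was arranged in the first step. The analytic implicit function theorem then gives that $t\mapsto P_f(t)$ (and, as a by-product, the equilibrium state) is real-analytic near $t=0$.

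The main obstacle is the inducing step: extracting from \cite{DobTod} a single scheme that works for \emph{all} $t$ in a fixed neighbourhood of $0$ — with the equilibrium states of all these nearby potentials liftable to it, the induced pressure finite, and the summability conditions holding uniformly. Once that is in place, the analyticity is a soft consequence of countable-Markov-shift thermodynamic formalism together with the implicit function theorem.
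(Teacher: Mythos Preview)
Your approach is the standard route to analyticity of pressure via inducing and the implicit function theorem, and it is essentially how results such as \cite[Theorem~1.28]{DobTod} are themselves proven. The paper, however, does not redo this from scratch. Instead it invokes the Jonker--Rand decomposition $\Omega(f)=\Omega_0\cup\Omega_1\cup\cdots$ of the non-wandering set \cite{JonRan}: the topological entropy is carried entirely by $\Omega_1$, while the deeper strata $\Omega_j$, $j\geq2$, live in the orbit of a restrictive interval $W$ and have strictly smaller entropy. Restricting $f$ to the complement of $W$ yields a map $g$ whose pressure agrees with $P_f$ on a neighbourhood of $0$ and whose non-wandering set is simple enough (isolated periodic orbits plus one transitive piece $\Omega_1'$) that \cite[Theorem~1.28]{DobTod} or \cite[Theorem~A]{PrzRL} applies directly as a black box.

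The point of this reduction is precisely to dispatch the obstacle you flag in your final paragraph. For a renormalisable $f$, an inducing scheme built for $f$ itself must contend with orbits that linger in restrictive intervals; verifying uniform liftability of all nearby equilibrium states, finiteness of the induced pressure, and the transitivity hypothesis needed in \cite{DobTod} is not automatic, and you do not actually carry it out. By excising $W$ first, the paper guarantees that the remaining dynamics is topologically transitive on the piece carrying the entropy, so the hypotheses of the cited theorem are met without further argument. Your route would ultimately succeed, but the Jonker--Rand shortcut is both shorter and sidesteps exactly the technical verification you leave open.
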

    \begin{proof}
        We shall use the decomposition of Jonker and Rand 
        \cite[Theorem~1]{JonRan} (for unimodal maps) of the non-wandering set $\Omega(f) = \Omega_0\cup \Omega_1 \cup\cdots$ into strata. The topological entropy is carried by $\Omega_1$: if $h_i$ denotes the topological entropy of $f$ restricted to $\Omega_i$, then $h_0 = 0$, 
        $$\htop(f) = h_1 > h_j \geq h_{j+1}$$
        for $j\geq 2$. 
        For tent maps, the decomposition stops at $\Omega_1$. If it stops at $\Omega_1$ for $f$ too, let $W := \emptyset$. If it does not stop at $\Omega_1$ for $f$, then there is a maximal open interval $W\ni 0$ collapsed by the semi-conjugacy to the corresponding tent map, and $\Omega_j$ is contained in the (periodic) orbit of $W$ for $j \geq 2$. 
        Therefore, if we denote by $g$ the restriction of  $f$ restricted to the complement of $W$, the non-wandering set of $g$ is just $\Omega_0 \cup \Omega_1$. Now $\Omega_0$ is just the orientation-preserving fixed point(s), while $\Omega_1$ consists of a finite (possibly $0$) number of isolated periodic orbits and a transitive set $\Omega_1'$. 
        From the entropy estimates $\htop(f) > \sup_{j\geq2} h_j$, it follows that the pressures of $g$ and $f$ coincide on a neighbourhood of zero.

        Apply \cite[Theorem~1.28]{DobTod} for $g$, noting that a transitive point in $\Omega_1'$ is a transitive point in $J(g)$ \cite[Definition~1.25]{DobTod}, to obtain analyticity of the pressure function for $g$ on a neighbourhood of zero. An alternative is, provided $g$ has no parabolic points, to apply \cite[Theorem~A]{PrzRL} for $g$ and $\Omega_1'$.
    \end{proof}
    
\begin{thmbody} \label{thm:mmecns}
    Let $s\mapsto g_s$ be a continuous one-parameter family of S-unimodal maps with positive entropy. Then
    $$s \mapsto \htop(g_s), \quad s \mapsto \mme^{g_s}, \quad s \mapsto \chi(\mme^{g_s})$$
    are continuous.
\end{thmbody}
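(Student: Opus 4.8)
The plan is to prove the three continuity assertions in order, each one feeding the next. The first, continuity of $s \mapsto \htop(g_s)$, is already available: it is Misiurewicz's theorem \cite{MisJumps}, valid for smooth unimodal families, and was invoked earlier in the excerpt. So the real content is the continuity of $s \mapsto \mme^{g_s}$ (in the weak-$*$ topology) and then of $s \mapsto \chi(\mme^{g_s})$.

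For continuity of the measure of maximal entropy, I would argue by compactness and uniqueness. Fix $s_0$ and a sequence $s_k \to s_0$; set $b_k := \exp(\htop(g_{s_k}))$, so $b_k \to b_0 := \exp(\htop(g_{s_0}))$ by the first assertion. Pass to a weak-$*$ convergent subsequence $\mme^{g_{s_k}} \to \nu$; since $g_{s_k} \to g_{s_0}$ uniformly, $\nu$ is $g_{s_0}$-invariant. By upper semicontinuity of metric entropy (as a function of both the map and the measure, which holds for piecewise-monotone interval maps — this is where one uses the uniform control), $h_\nu(g_{s_0}) \geq \limsup_k h_{\mme^{g_{s_k}}}(g_{s_k}) = \limsup_k \htop(g_{s_k}) = \htop(g_{s_0})$. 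Hence $\nu$ is a measure of maximal entropy for $g_{s_0}$, and by uniqueness \cite{Rai} (valid here since each $g_s$ is S-unimodal with positive entropy) $\nu = \mme^{g_{s_0}}$. As the limit is independent of the subsequence, the whole sequence converges. The one subtlety is that upper semicontinuity of entropy in the map-variable is not completely automatic for interval maps with critical points; I would lean on the semi-conjugacy to the tent family $T_{b_k}$, for which the maximal measure is explicit and depends continuously on $b$, together with the fact that the semi-conjugacies collapse only the (periodic orbit of the) restrictive or gap interval, which carries zero entropy.

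For the third assertion, continuity of $\chi(\mme^{g_s}) = \int \log|g_s'|\, d\mme^{g_s}$, weak-$*$ convergence of the measures is not by itself enough because $\log|g_s'|$ is unbounded below near the critical point. Here I would use the thermodynamic machinery assembled just above in the excerpt. By Corollary~\ref{cor:facts123}/Fact~\ref{facts1}, $h(\mme^{g_s}) < \chi(\mme^{g_s})$, so $\mme^{g_s}$ is the equilibrium measure for the parameter $t$ in a right-neighbourhood of $0$ where $\htop(g_s) = P_{g_s}(0) > 0$ and, more to the point, $P_{g_s}$ is real-analytic near $0$ (the preceding proposition) with $P_{g_s}'(0) = -\chi(\mme^{g_s})$. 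Uniform positivity $\inf_k P_{g_{s_k}}(t) > 0$ for small $|t|$ follows from continuity of $\htop$ and convexity/Lipschitz bounds on pressure, so Proposition~\ref{prop:presconv} gives pointwise convergence $P_{g_{s_k}}(t) \to P_{g_{s_0}}(t)$ on a fixed neighbourhood of $0$. Since the $P_{g_{s_k}}$ are convex functions converging pointwise on an interval, they converge locally uniformly and — again by convexity — their one-sided derivatives at $0$ converge to that of the limit; that is, $\chi(\mme^{g_{s_k}}) \to \chi(\mme^{g_{s_0}})$.

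The main obstacle is the last step's dependence on Proposition~\ref{prop:presconv}, whose hypothesis requires a uniform lower bound $t > -\htop(f_0)/(2\log M)$ and a uniform positivity of the pressures $\inf_k P_{f_k}(t) > 0$; one must check these are met on a genuine neighbourhood of $0$ uniformly in $k$, which needs the uniform derivative bound $\sup_k \|g_{s_k}'\| \leq M$ (automatic on a compact parameter interval by continuity of $s \mapsto g_s$ in $\cC^1$, or at least in $\cC^0$ together with an a priori $\cC^1$ bound from the family) and Misiurewicz continuity of $\htop$ to keep $\htop(g_{s_k})$ bounded away from $0$ near $s_0$. Everything else — weak-$*$ compactness, upper semicontinuity, uniqueness, and the convexity argument for derivatives — is standard once these uniformities are in place.
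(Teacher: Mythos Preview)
Your third claim is handled exactly as in the paper: analyticity of $P_{g_s}$ near $0$ gives $P_{g_s}'(0)=-\chi(\mme^{g_s})$, Proposition~\ref{prop:presconv} gives pointwise convergence of the convex functions $P_{g_{s_k}}$ on a neighbourhood of $0$, and convexity upgrades this to convergence of derivatives. Your verification of the hypotheses of Proposition~\ref{prop:presconv} (uniform derivative bound, uniform positivity of pressure near $0$ via continuity of $\htop$ and the Lipschitz bound on pressure) is what is implicitly used. One quibble: you do not need the strict inequality $h(\mme^{g_s})<\chi(\mme^{g_s})$ from Corollary~\ref{cor:facts123} (which is stated only for quadratic maps anyway), nor that $\mme^{g_s}$ remains the equilibrium state on a one-sided neighbourhood of $0$; all that is required is that $\mme^{g_s}$ is the (unique) equilibrium state \emph{at} $t=0$ and that $P_{g_s}$ is differentiable there.

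For the second claim the paper takes a shorter route than you do: it simply cites Raith \cite{Rai}, who proved weak-$*$ continuity of $s\mapsto\mme^{g_s}$ directly. Your compactness-plus-uniqueness argument is a reasonable alternative, and you correctly flag its weak point, namely that upper semicontinuity of $h_\mu(f)$ jointly in $(\mu,f)$ is not automatic for smooth interval maps with a critical point; your suggested fix via the semi-conjugacy to the tent family (where the maximal measure is explicit and continuous in the slope) is essentially how Raith's argument proceeds, so the two approaches converge. Either way, note that for the theorem as stated the second claim is not actually used in proving the third: the convexity/pressure argument bypasses weak-$*$ convergence entirely, which is precisely its advantage given that $\log|g_s'|$ is unbounded.
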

\begin{proof}
    The first claim was shown by Misiurewicz \cite{MisJumps}.
    The second claim was shown by Raith \cite{Rai}.
    Let us show the third. 
    The derivative of $P_{g_s}$ at $0$ exists, since $P_{g_s}$ is real-analytic on a neighbourhood of $0$. The value of the derivative is $-\chi(\mme^{g_s})$. Since the pressures converge (Proposition~\ref{prop:presconv}) on a neighbourhood of zero and the pressure functions are convex,  the derivatives converge. Consequently, the Lyapunov exponents converge. 
\end{proof}

\begin{lem} \label{lem:acipconv}
    Let $(g_k)_{k\geq 1}$ be a sequence  of maps converging to a map $g_0$, and suppose each $g_k$, $k \geq 0$, is an S-unimodal map with positive entropy and derivative bounded by some $M>0$. 
    Suppose each $g_k$, $k \geq 1$, has an acip $\acip^k$ and 
    $$\htop(g_k)/\chi(\acip^k) \to 1$$ as $k \to \infty$. 
    Then the measure of maximal entropy for $g_0$ is absolutely continuous.
\end{lem}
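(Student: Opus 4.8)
The plan is to convert the hypothesis into a lower bound on the pressure functions $P_{g_k}$ near $t=0$, pass to the limit with Proposition~\ref{prop:presconv}, and then read off $\chi(\mme^{g_0})$ from the derivative $P_{g_0}'(0)$.

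First I would collect the consequences of the hypotheses. Since topological entropy depends continuously on smooth unimodal maps, $\htop(g_k)\to\htop(g_0)=:h_0>0$; together with $\htop(g_k)/\chi(\acip^k)\to 1$ this forces $\chi(\acip^k)\to h_0$, so $c:=\inf_{k\geq1}\chi(\acip^k)>0$ (each $\chi(\acip^k)=h(\acip^k)$ being positive). For $k\geq1$ the acip $\acip^k$ is ergodic with $h(\acip^k)=\chi(\acip^k)$, hence lies in $\M(g_k)$ and gives, for every $t\in(0,1)$,
$$P_{g_k}(t)\ \geq\ h(\acip^k)-t\chi(\acip^k)\ =\ (1-t)\chi(\acip^k)\ \geq\ (1-t)c\,.$$
On the other side, $P_{g_0}$ is Lipschitz continuous with $P_{g_0}(0)=h_0>0$, so there is $\eps_0\in(0,1)$ with $P_{g_0}(t)>0$ for $t\in[0,\eps_0]$. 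Hence $\inf_{k\geq0}P_{g_k}(t)>0$ for all $t\in(0,\eps_0)$.

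Next I would pass to the limit. The $g_k$ are S-unimodal, so $M>1$ and $-h_0/(2\log M)<0<t$ for $t\in(0,\eps_0)$; thus Proposition~\ref{prop:presconv} applies and yields, for every $t\in(0,\eps_0)$,
$$P_{g_0}(t)\ =\ \lim_{k\to\infty}P_{g_k}(t)\ \geq\ \lim_{k\to\infty}(1-t)\chi(\acip^k)\ =\ (1-t)h_0\,.$$
Now $P_{g_0}$ is real-analytic near $0$ (by the proposition preceding Theorem~\ref{thm:mmecns}) with $P_{g_0}'(0)=-\chi(\mme^{g_0})$ (as in the proof of Theorem~\ref{thm:mmecns}), and $P_{g_0}(0)=h_0$, so
$$P_{g_0}'(0)=\lim_{t\to0^+}\frac{P_{g_0}(t)-h_0}{t}\ \geq\ \lim_{t\to0^+}\frac{(1-t)h_0-h_0}{t}=-h_0\,,$$
i.e.\ $\chi(\mme^{g_0})\leq h_0$. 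On the other hand $\mme^{g_0}$, being the unique (hence ergodic) measure of maximal entropy, has $\chi(\mme^{g_0})\geq\htop(g_0)>0$ by Ruelle's inequality, so $\mme^{g_0}\in\M(g_0)$ and Fact~\ref{facts1}(\ref{enum:facts11}) gives $h_0=h(\mme^{g_0})\leq\chi(\mme^{g_0})$. Therefore $\chi(\mme^{g_0})=h(\mme^{g_0})=h_0>0$, and Fact~\ref{facts1}(\ref{enum:facts12}) shows $\mme^{g_0}$ is an acip, i.e.\ absolutely continuous.

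The only real work is the passage to the limit: one has no a priori control on $P_{g_0}$ beyond continuity and $P_{g_0}(0)=h_0$, so the crux is to secure the uniform strict positivity $\inf_{k\geq0}P_{g_k}(t)>0$ for $t$ arbitrarily close to $0$ (where the bound $P_{g_k}(t)\geq(1-t)c$ is tightest), which is exactly what is needed to invoke Proposition~\ref{prop:presconv} there. Once that is in place, the identification $-P_{g_0}'(0)=\chi(\mme^{g_0})$ and the Ruelle/Pesin dichotomy of Fact~\ref{facts1} are immediate from the preceding results.
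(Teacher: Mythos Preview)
Your proof is correct and follows essentially the same strategy as the paper: use the acips to bound $P_{g_k}(t)$ from below by $(1-t)\chi(\acip^k)$, invoke Proposition~\ref{prop:presconv} to pass to the limit, identify $P_{g_0}'(0)=-\chi(\mme^{g_0})$ via analyticity, and conclude with Ruelle/Pesin.

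The only difference is local versus global: the paper pushes the pressure bound out to the whole interval $[0,1)$, uses $P_{g_0}(1)\leq 0$ together with convexity to force $P_{g_0}$ to be the straight line $(1-t)h_0$ on $[0,1]$, and then reads off the slope at $0$. You instead restrict to a small interval $(0,\eps_0)$ on which $P_{g_0}>0$ is immediate from continuity, and compute the one-sided derivative at $0$ directly from the inequality $P_{g_0}(t)\geq(1-t)h_0$. Your route is a mild streamlining, since it sidesteps the (implicit) bootstrap needed to verify $\inf_{k\geq 0}P_{g_k}(t)>0$ for $t$ close to $1$; the paper's route, on the other hand, yields the stronger intermediate conclusion that $P_{g_0}$ is affine on all of $[0,1]$.
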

\begin{proof}
    We have $h(\acip^{k}) = \chi(\acip^{k})$ for all $k \geq 0$. 
    Since $\htop(g_0) > 0$ and $\htop$ is continuous, we deduce 
    $$
    \lim_{k\to\infty} h(\acip^{k})= 
    \lim_{k\to\infty} \chi(\acip^{k})= 
    \htop(g_0).$$ 
    Consequently, 
    $$
    \liminf_{k \to \infty} P_{g_k}(t) \geq \htop(g_0) -t \htop(g_0).$$
    Thus, by Proposition~\ref{prop:presconv}, $P_{g_0}(t) \geq \htop(g_0) - t \htop(g_0)$ for $t \in [0,1)$. Meanwhile, $P_{g_0}(1) \leq 0$, so $P_{g_0}(1) = 0$. 
        Since $P_{g_0}$ is convex and its graph passes through $(0, \htop(g_0))$ and $(1,0)$, its graph over the interval $[0,1]$ is in fact the straight line joining these points with slope $-\htop(g_0)$. 
        The graph of the line $\htop(g_0)-t\chi(\mme^{g_0})$ is tangent to $P_{g_0}$ at $t = 0$. 
        Since $P_{g_0}$ is  analytic on a neighbourhood of $0$, we deduce that $\chi(\mme^{g_0}) = \htop(g_0) = h(\mme^{g_0})$, which implies $\mme^{g_0}$ is absolutely continuous.
\end{proof}

    \begin{prop} \label{prop:epceps}
    Given $\eps>0$, there exists $\delta>0$ for which 
    \begin{itemize}
        \item
            for all $a \in  [-2+\eps, a_F-\eps]$, if $\acip^a$ exists then $h(a) / \chi(\acip^a) > 1+ \delta$;
        \item
            for all $a \in [-2+\eps, a_F-\eps]$,  $\chi(\mme^a)/h(a) > 1+\delta$.
    \end{itemize}
\end{prop}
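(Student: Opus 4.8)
The plan is a compactness argument that upgrades to uniform bounds the pointwise strict inequalities of Corollary~\ref{cor:facts123} and of the discussion preceding this proposition, using the continuity and convergence statements established above. A preliminary reduction: for $a \in [-2,\tfrac14]$ the positive fixed point $\beta_a := \tfrac12(1+\sqrt{1-4a})$ lies in $[\tfrac12,2]$ and depends smoothly on $a$, so conjugating $f_a|_{I_a}$ by $x \mapsto x/\beta_a$ produces a $\cC^2$ one-parameter family $a \mapsto \tilde f_a$ of non-degenerate S-unimodal maps of the fixed interval $[-1,1]$ with uniformly bounded derivative. Topological entropy, the Lyapunov exponents of invariant measures, and the properties of being an acip or the measure of maximal entropy are preserved by this conjugacy, so I would work with $\tilde f_a$ throughout. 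By monotonicity of $h$, every $\tilde f_a$ with $a \in [-2+\eps, a_F-\eps]$ has entropy $\geq h(a_F-\eps) > 0$.

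For the second bullet I would argue directly. By Theorem~\ref{thm:mmecns} the map $a \mapsto \chi(\mme^a)$ is continuous on $[-2+\eps, a_F-\eps]$, and so is $a \mapsto h(a)$. Fix $a$ in this interval; then $a \ne -2$. Ruelle's inequality gives $\chi(\mme^a) \geq h(\mme^a) = h(a) > 0$. If equality held, then by Fact~\ref{facts1}(\ref{enum:facts12}) the measure $\mme^a$ would be an acip, hence equal to $\acip^a$ by uniqueness of the acip, contradicting Theorem~\ref{thm:Zdun} since $a \ne -2$. Therefore $\chi(\mme^a) > h(a)$ for every $a \in [-2+\eps, a_F-\eps]$, so $a \mapsto \chi(\mme^a)/h(a)$ is a continuous function which is strictly greater than $1$ on this compact interval, hence bounded below there by some $1+\delta_2$ with $\delta_2 > 0$.

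For the first bullet, $a \mapsto \chi(\acip^a)$ is neither everywhere defined nor continuous, so I would argue by contradiction. If the conclusion fails, there are parameters $a_k \in [-2+\eps, a_F-\eps]$ for which $\acip^{a_k}$ exists and $h(a_k)/\chi(\acip^{a_k}) \to 1$; passing to a subsequence, $a_k \to a_0 \in [-2+\eps, a_F-\eps]$ and $\tilde f_{a_k} \to \tilde f_{a_0}$ uniformly. The maps $\tilde f_{a_k}$, $k \geq 0$, are S-unimodal with positive entropy and uniformly bounded derivative; since $h(\acip^{a_k}) = \chi(\acip^{a_k})$ and $h$ is continuous, $\htop(\tilde f_{a_k})/\chi(\acip^{a_k}) \to 1$. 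Hence Lemma~\ref{lem:acipconv} applies and yields that $\mme^{a_0}$ is absolutely continuous. An absolutely continuous invariant probability is an acip, and the acip is unique when it exists, so $\mme^{a_0} = \acip^{a_0}$, whence $a_0 = -2$ by Theorem~\ref{thm:Zdun} --- contradicting $a_0 \geq -2+\eps$. So there is $\delta_1 > 0$ with $h(a)/\chi(\acip^a) \geq 1+\delta_1$ whenever $a \in [-2+\eps, a_F-\eps]$ and $\acip^a$ exists. Taking $\delta := \min(\delta_1, \delta_2)$ then completes the proof.

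The genuinely delicate point is the first bullet: because $a \mapsto \chi(\acip^a)$ is only partially defined and truly discontinuous, the bound cannot be read off from compactness of a graph but must be funneled through the pressure-convergence results from \cite{DobTod} encapsulated in Proposition~\ref{prop:presconv} and Lemma~\ref{lem:acipconv}; one must also be careful, as in the preliminary reduction, that the quadratic maps --- a priori acting on the $a$-dependent intervals $I_a$ --- can be placed on a common interval so that ``convergence of maps'' in the hypotheses of that lemma is meaningful, and that positivity of entropy and the uniform derivative bound persist along the chosen sequence.
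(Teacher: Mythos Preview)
Your proof is correct and follows essentially the same approach as the paper: the first bullet is obtained by contradiction from Lemma~\ref{lem:acipconv} together with Theorem~\ref{thm:Zdun}, and the second from the continuity in Theorem~\ref{thm:mmecns} combined with the strict inequality $\chi(\mme^a) > h(a)$ for $a \ne -2$. Your write-up is more explicit (the conjugation to a common interval, the compactness argument), but the underlying strategy is identical to the paper's two-line proof.
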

\begin{proof}
   The first statement is an immediate corollary of Lemma~\ref{lem:acipconv} and Theorem~\ref{thm:Zdun}.
    The second follows from Theorem~\ref{thm:mmecns}. 
    \end{proof}

    \begin{prop} \label{propEpsdim}
        For all $\eps >0$ and $\de >0$ 
        $$
        \mathrm{Leb}(\{ a \in X \cap (-2, -2+\eps) : 1< h(a)/\la(a) < 1 + \de\}) > 0,$$
        $$
        \mathrm{Leb}(h\left(\{ a \in Y \cap (-2, -2+\eps) :1> h(a)/\la(a) > 1- \de\}\right)) > 0.$$
    \end{prop}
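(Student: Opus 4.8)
The plan is to prove the two statements separately, in both cases exploiting Theorem~\ref{thm:Zdun} ($\mme^{-2}=\acip^{-2}$): as $a\to-2$ the three quantities $h(a)$, $\chi(\mme^a)$, $\chi(\acip^a)$ are all forced towards $\log 2$, and the work is to see this on \emph{positive-measure} sets. I will use freely that $h$ is continuous and monotone with $h^{-1}(\log 2)=\{-2\}$, that $\chi(\mme^{-2})=\chi(\acip^{-2})=\log 2$, and that by Proposition~\ref{propQuadwr} almost every parameter of $\F^c$ near $-2$ lies in $\V\cap\JWR$ with $\la(a)=\chi(\acip^a)$.

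The statement about $Y$ is soft. By Corollary~\ref{cor:facts123}, for $a\in Y\cap(-2,a_F)$ one has $h(a)/\la(a)=h(a)/\chi(\mme^a)<1$ automatically, so only the bound $>1-\de$ is at issue. By Proposition~\ref{prop:tentwr}, $h(Y)$ has full measure in $[0,\log 2]$; fix a small $\eta>0$ and take $w\in(\log 2-\eta,\log 2)\cap h(Y)$. Since $Y\subset\V$, there is a unique $a=a(w)\in Y$ with $h(a)=w$, and $a(w)\to-2$ as $w\to\log 2$ by continuity and monotonicity of $h$ together with $h^{-1}(\log 2)=\{-2\}$; hence $a(w)\in(-2,-2+\eps)$ once $\eta$ is small. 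By Theorem~\ref{thm:mmecns} applied to the quadratic family at $a=-2$, $\chi(\mme^{a(w)})\to\log 2$, so $h(a(w))/\la(a(w))=w/\chi(\mme^{a(w)})\to 1$. Shrinking $\eta$ (depending on $\eps,\de$) makes $1>h(a(w))/\la(a(w))>1-\de$ for all such $w$; thus $h\bigl(\{a\in Y\cap(-2,-2+\eps):1>h(a)/\la(a)>1-\de\}\bigr)\supseteq(\log 2-\eta,\log 2)\cap h(Y)$, which has positive measure.

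For $X$ I must produce, arbitrarily close to $-2$, a positive-measure set of parameters on which $\acip^a$ exists with $\chi(\acip^a)$ \emph{close to} $h(a)$ (hence close to $\log 2$, since $h(a)\to\log 2$ and $\chi(\acip^a)<h(a)$ by Corollary~\ref{cor:facts123}). The right set is a Benedicks--Carleson/Tsujii good set based at the Chebyshev parameter itself: $f_{-2}$ is a non-renormalisable Misiurewicz parameter and $\acip^{-2}$ exists with $\chi(\acip^{-2})=h(\acip^{-2})=\log 2$. Applying \cite[Main Theorem]{Tsu95} at $a_1=-2$ exactly as in the proof of Proposition~\ref{prop:smallh} yields a positive-measure set $A$ having $-2$ as a one-sided density point, such that for every $a\in A$: $\acip^a$ exists, $f_a$ is Collet--Eckmann with critical point typical for $\acip^a$, $f_a$ is non-renormalisable, and $\acip^a\to\acip^{-2}$ weakly as $a\to-2$ along $A$. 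For almost every $a\in A$, Proposition~\ref{propQuadwr} gives $\la(a)=\chi(\acip^a)$ and $a\in\V\cap\JWR$, hence $a\in X$. Granting the upgrade $\chi(\acip^a)=\int\log|f_a'|\,d\acip^a\to\int\log|f_{-2}'|\,d\acip^{-2}=\log 2$ along $A$, combined with continuity of $h$ at $-2$, we get $h(a)/\la(a)=h(a)/\chi(\acip^a)\to1$; so for $a\in A$ close enough to $-2$ — a positive-measure subset of $A\cap(-2,-2+\eps)$, as $-2$ is a density point — one has $1<h(a)/\la(a)<1+\de$.

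The only genuinely non-trivial point is that last upgrade: weak convergence $\acip^a\to\acip^{-2}$ alone gives merely $\limsup_a\chi(\acip^a)\le\chi(\acip^{-2})$ (as used in the proof of Proposition~\ref{prop:smallh}), because $\log|f_a'|$ is unbounded below at $0$, and indeed by Proposition~\ref{prop:smallh} there are also positive-measure sets near $-2$ on which $\chi(\acip^a)$ is arbitrarily small. The matching lower bound requires uniform integrability of $\log|f_a'|$ against $\{\acip^a:a\in A\}$ — a uniform bound, at logarithmic scale, on the $\acip^a$-mass of small neighbourhoods of $0$ — which is precisely what the quantitative Benedicks--Carleson control near $-2$ provides: for parameters in $A$ the post-critical orbit does not return anomalously close to $0$, so the invariant densities stay uniformly bounded near $0$. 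Equivalently, one must know that Tsujii's continuity of the acip at the Misiurewicz base point $-2$ holds in a topology strong enough to make $\mu\mapsto\int\log|f'|\,d\mu$ continuous; it is this estimate, rather than any abstract semicontinuity, that does the work, and establishing it cleanly is the main obstacle.
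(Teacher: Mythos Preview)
Your argument for the $Y$ statement is correct and essentially the paper's: Theorem~\ref{thm:mmecns} gives continuity of $a\mapsto\chi(\mme^a)$ at $a=-2$, so combined with Proposition~\ref{prop:tentwr} and continuity of $h$ (with $h$ not locally constant at $-2$) the result follows.

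For the $X$ statement you have correctly located the crux but not resolved it. You need \emph{continuity}, not merely upper semicontinuity, of $a\mapsto\chi(\acip^a)$ along a positive-measure Benedicks--Carleson set having $-2$ as a density point. Your final paragraph explicitly flags this as ``the main obstacle'' and sketches what would be required (uniform integrability of $\log|f_a'|$ via control of the invariant densities near $0$), but does not prove it. This is a genuine gap: as you yourself observe via Proposition~\ref{prop:smallh}, there exist positive-measure sets arbitrarily close to $-2$ on which $\chi(\acip^a)$ is arbitrarily small, so Tsujii's weak convergence and inequality~(2.1) alone are insufficient, and the choice of $A$ matters. The paper closes the gap by invoking Freitas \cite{FreSRB}, who proved precisely that $-2$ is a one-sided Lebesgue density point of a positive-measure set of Benedicks--Carleson parameters on which $a\mapsto h(\acip^a)=\chi(\acip^a)$ varies continuously. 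Your heuristic for why this should hold (quantitative BC control of post-critical recurrence yields uniform density bounds) is the right intuition and is indeed what Freitas carries out; you should cite his result rather than leave the estimate unproven.
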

    \begin{proof}
        Freitas \cite{FreSRB} showed that 
         $-2$ is a one-sided Lebesgue density point of a positive measure set of (Benedicks-Carleson) parameters on which $a \mapsto h(\acip^a)$ and thus $a \mapsto \chi(\acip^a)$ vary continuously, which implies the first statement. 

        The second statement follows from Theorem~\ref{thm:mmecns} and Proposition~\ref{prop:tentwr}, noting that $h$ is not locally constant at $a =-2$. 
    \end{proof}

    \subsection{Dimension estimates}

    The following lemma is a variant of Lemma~5.1.3 from the course notes of Bishop and Peres, \emph{Fractal sets in Probability and Analysis}.
\begin{lem}
\label{lemFD}
Let $u:E \to \R$ be a real map defined on a set $E \se \R$. For every $\al > 0$ let the $\al$-flat set of $u$ be
$$F(u,\al):=\left\{ x \in E \ : \ \liminf_{\matop{y \to x}{y \in E}} \frac{\ln |u(y)-u(x)|}{\ln |y-x|} \geq \al \right\}.$$
If $A \se F(u,\al)$ then 
$$\hd(u(A)) \leq \frac{\hd(A)}\al.$$
\end{lem}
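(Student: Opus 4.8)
The plan is to run the standard Hausdorff-measure covering argument, transferring a cover of $A$ to a cover of $u(A)$ by controlling diameters of images. Fix $\al' < \al$ and $s > \hd(A)/\al'$; it suffices to show $\mathcal H^s(u(A)) = 0$, since then $\hd(u(A)) \le s$ and letting $s \searrow \hd(A)/\al'$ and $\al' \nearrow \al$ gives the claim. First I would stratify $A$ by the ``scale at which the flatness kicks in'': for each integer $k \ge 1$ let
$$
A_k := \left\{ x \in A : |u(y) - u(x)| \le |y - x|^{\al'} \text{ for all } y \in E \text{ with } |y-x| < 1/k \right\}.
$$
Since every $x \in A \se F(u,\al)$ satisfies $\liminf_{y\to x} \frac{\ln|u(y)-u(x)|}{\ln|y-x|} \ge \al > \al'$, each such $x$ lies in some $A_k$, so $A = \bigcup_{k\ge1} A_k$. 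By countable stability of Hausdorff dimension (and subadditivity of $\mathcal H^s$), it is enough to prove $\mathcal H^s(u(A_k)) = 0$ for each fixed $k$.

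Now fix $k$ and $\eta > 0$. Since $s > \hd(A)/\al' \ge \hd(A_k)/\al'$, we have $\al' s > \hd(A_k)$, so $\mathcal H^{\al' s}(A_k) = 0$; hence there is a cover of $A_k$ by sets $\{U_i\}_i$ with $\diam U_i < \min(1/k, 1)$ and $\sum_i (\diam U_i)^{\al' s} < \eta$. We may assume each $U_i$ meets $A_k$, so pick $x_i \in U_i \cap A_k$. For any $y \in U_i \cap E$ we have $|y - x_i| \le \diam U_i < 1/k$, so by definition of $A_k$, $|u(y) - u(x_i)| \le |y-x_i|^{\al'} \le (\diam U_i)^{\al'}$. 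Therefore $u(U_i \cap A_k)$ is contained in an interval of length $\le 2(\diam U_i)^{\al'}$, i.e.\ $\diam\big(u(U_i \cap A_k)\big) \le 2(\diam U_i)^{\al'}$. The sets $u(U_i \cap A_k)$ cover $u(A_k)$, each has diameter $\le 2(\diam U_i)^{\al'} \to 0$ uniformly as the cover is refined, and
$$
\sum_i \Big(\diam\big(u(U_i \cap A_k)\big)\Big)^{s} \le \sum_i 2^s (\diam U_i)^{\al' s} < 2^s \eta.
$$
Letting $\eta \to 0$ gives $\mathcal H^s(u(A_k)) = 0$, as desired.

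The argument is essentially routine; the only point requiring a little care is the stratification step, which is needed precisely because the flatness hypothesis is only asymptotic (the constant $C$ and the scale depend on $x$), so one cannot apply the diameter bound with a single global radius. Countable stability of $\hd$ then absorbs this. No genuine obstacle arises, and no properties of $u$ beyond the pointwise flatness inequality are used — in particular $u$ need not be continuous or monotone.
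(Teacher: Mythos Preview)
Your proof is correct and follows essentially the same route as the paper's: a countable stratification of $A$ according to the scale at which the $\al'$-H\"older bound holds, followed by the standard covering argument transferring a cover of each stratum to a cover of its image. The paper's version is slightly terser (it says ``modulo a countable partition of $A$'' and allows a constant $C$ in the H\"older bound rather than absorbing it into the exponent), but the substance is identical.
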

\begin{proof}
    Let $d > \hd(A)$, let $\eps >0$ and let $0< \beta < \alpha$. 
    Modulo a countable partition of $A$, we can assume that there is a $C >1$ for which, 
    for all $x \in A, y \in E$ with $|x-y| < C^{-1}$,
$$|u(y)-u(x)| \leq C|y-x|^{\beta}.$$
Let $\{U_j\}$ be a covering of $A$ of diameter $< C^{-1}$ such that $\sum_j |U_j|^d \leq \eps$. 
The covering $\{u(U_j)\}$ of $u(A)$ 
satisfies 
$$
\sum_j |u(U_j)|^{d/\beta} \leq C \sum_j |U_j|^{d} \leq C\eps.
$$
Since $d > \hd(A), \eps >0$ and $\beta < \alpha$ are arbitrary, this completes the proof.
\end{proof}

As $X,Y \subset \V$, $h$ is bijective on $X \cup Y$. In the remaining lines of this section, $h$ will stand for its restriction to $X \cup Y$. Applying \rthm{Holder}, for each $a \in X \cup Y \subset \V \cap \JWR$,
$$a \in F(h, h(a) / \la(a)) \text{ and } h(a) \in F(h^{-1}, \la(a) / h(a)).$$

\begin{proof}[Proof of Theorem~\ref{thm:main}]
    \rlem{FD} and Proposition~\ref{prop:epceps} now imply 
        $$\hd\left(h\left(X \cap [-2+ \eps, a_F - \eps] \right)\right), 
    \hd\left(Y \cap [-2+ \eps, a_F - \eps] \right) < 1$$
 for each $\eps >0$, as required.
\end{proof}

\begin{proof}[Proof of \rthm{SuperHolder}]
    Given $\eps'>0$, let  $A$ be given by Proposition~\ref{prop:smallh}. For $a \in A$, 
    $$
    \chi(\acip^a) = \la(a) \leq \eps',
    $$
    while $h(a) \geq \frac{\log 2}{2}.$
    By \rthm{Holder} and \rlem{FD}, $$\hd({h(A)}) \leq \eps' \frac{2}{\log 2} \hd({A}).$$
    Noting that $\hd({A}) =1$ and that $\eps'$ can be taken arbitrarily small, there exist positive measure sets $A$ with $\hd({h(A)}) < \eps$, as required. 
\end{proof}

\begin{thmbody} \label{thmHDim1}
        For every $\eps >0$, 
        $$
        \hd\left(h(X \cap (-2, -2+\eps) )\right) = \hd(Y \cap (-2, -2+\eps)) = 1.$$
    \end{thmbody}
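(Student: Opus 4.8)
The plan is to read this off directly from the dimension lemma \rlem{FD} and \rprop{Epsdim}, which already furnish, arbitrarily close to $a=-2$, sets of parameters of positive Lebesgue measure on which $h(a)/\la(a)$ is pinned as near $1$ as desired. Since all four sets in the statement are subsets of $\R$, their Hausdorff dimensions are automatically at most $1$, so only the lower bounds need an argument; and since the sets produced by \rprop{Epsdim} carry positive Lebesgue measure (hence full dimension), the estimates will pin the dimensions at exactly $1$. Throughout I write $h^{-1}$ for the inverse of the strictly increasing injection $h|_{X\cup Y}$, and I use the two facts recorded just before the proof of \rthm{Image}: for $a\in X\cup Y$, $a\in F(h,h(a)/\la(a))$ and $h(a)\in F(h^{-1},\la(a)/h(a))$.

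First I would handle $h(X\cap(-2,-2+\eps))$. Fix $\eps>0$ and $\de\in(0,1)$ and set $Z:=\{a\in X\cap(-2,-2+\eps):1<h(a)/\la(a)<1+\de\}$; by \rprop{Epsdim}, $\mathrm{Leb}(Z)>0$, so $\hd(Z)=1$. For $a\in Z$ one has $\la(a)/h(a)>(1+\de)^{-1}>1-\de$, so (using that $F(h^{-1},\cdot)$ is nested) $h(Z)\subseteq F(h^{-1},1-\de)$. Applying \rlem{FD} with $u=h^{-1}$, $A=h(Z)$, and using $h^{-1}(h(Z))=Z$, gives $\hd(Z)\le\hd(h(Z))/(1-\de)$, whence $\hd(h(X\cap(-2,-2+\eps)))\ge\hd(h(Z))\ge 1-\de$; letting $\de\to0$ yields the first equality.

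The case $Y\cap(-2,-2+\eps)$ is symmetric. With $\eps,\de$ as above, put $W:=\{a\in Y\cap(-2,-2+\eps):1-\de<h(a)/\la(a)<1\}$; by \rprop{Epsdim}, $\mathrm{Leb}(h(W))>0$, so $\hd(h(W))=1$. For $a\in W$, \rthm{Holder} gives $\lim_{t\to0}\frac{\ln|h(a+t)-h(a)|}{\ln|t|}=h(a)/\la(a)>1-\de$, so $W\subseteq F(h,1-\de)$. Applying \rlem{FD} with $u=h$, $A=W$, gives $\hd(h(W))\le\hd(W)/(1-\de)$, so $\hd(Y\cap(-2,-2+\eps))\ge\hd(W)\ge(1-\de)\hd(h(W))=1-\de$; letting $\de\to0$ yields the second equality.

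There is no genuine obstacle at this stage: all of the analytic substance has been pushed into \rthm{Holder}, \rlem{FD}, and above all \rprop{Epsdim}, which in turn rests on Freitas's theorem that $a=-2$ is a one-sided Lebesgue density point of a positive-measure set of Benedicks-Carleson parameters along which $\chi(\acip^a)$ varies continuously, and on the continuity statement Theorem~\ref{thm:mmecns} for the $Y$-side. The only thing that deserves a moment's care is the bookkeeping around $h^{-1}$ --- that $h|_{X\cup Y}$ is a strictly increasing homeomorphism onto its image, so that $F(h^{-1},\cdot)$ is meaningful and $h^{-1}(h(Z))=Z$ --- and this is exactly the setup already in place in the discussion preceding the proof of \rthm{Image}, so nothing new is required.
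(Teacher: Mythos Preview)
Your proof is correct and is precisely the intended unpacking of the paper's one-line proof (``This follows from \rprop{Epsdim} and \rthm{Holder}''), with the implicit use of \rlem{FD} made explicit. One harmless slip: $h$ is monotone \emph{decreasing} on $\ip$ (from $\log 2$ at $a=-2$ to $0$ at $a=1/4$), not increasing; this does not affect anything, since all you need is that $h$ is injective on $\V \supset X\cup Y$, which is exactly the content of the definition of $\V$ and the setup recorded before the proof of \rthm{Image}.
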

    \begin{proof}
        This follows from \rprop{Epsdim} and \rthm{Holder}.
    \end{proof}

\section{Uniform H\"older regularity}
\label{sectUniHol}
    
In this section we shall prove \rthm{UniHol}, namely that the entropy function is uniformly H\"older continuous for the quadratic family. 

Recall that, if $h(a) < \frac{\ln 2}{2^m}$, then $f_{a}$ is $m$ times Feigenbaum renormalisable and the $m$th renormalisation is  a unimodal map $g$ with entropy $$\htop(g) = 2^m h(a).$$

By the theory of renormalisation \cite[Theorem~1]{Sul}, there is a universal bound (independent of $m \geq 0$) $\Ga \geq 4$:
\begin{equation}\label{equSullbound} ||g'||_{\infty} \leq \Ga.\end{equation}
    Numerical estimates (not presented here) indicate that if $m \geq 1$, one can take $\Ga < 3$. 

Let $a_m$ denote the quadratic parameter with $h(a_m) = \frac{\log 2}{2^m}$. Note that  $a_0 = -2$, corresponding to the Chebyshev map. For each $m\geq0$, $f_{a_m}$ is $m$ times Feigenbaum renormalisable and the $m$th renormalisation is conjugate (on the restrictive interval) to Chebyshev. Again from renormalisation theory, $$ \lim_{m\to\infty} \left|\frac{a_{m}-a_{m-1}}{a_{m+1}-a_{m}}\right|
    =
   \de_* \approx 4.67.$$
   It follows that, given $\alpha_0 < \frac{\log 2}{\log \de_*}$, for some $C_0>1$, $h$ restricted to the set $\{a_m : m \geq 0\}$ is $(C_0, \alpha_0)$-H\"older. 
    To prove 
    \rthm{UniHol}, it therefore suffices to prove the existence of $C_1, \alpha_1$ for which, on each interval $[a_m, a_{m+1}]$, $h$ is $(C_1, \alpha_1)$-H\"older. 
    The following lemma is primarily due to Guckenheimer, but the proof uses Brucks and Misiurewicz' (Benedicks-Carleson-type) estimates for tent maps instead of studying kneading determinants. 

    \begin{lem}[{\cite[Lemma~3]{Guc}}]
        There is a constant $C>0$ such that, for every $n$ and $a,a' \in [a_m, a_{m+1}]$, if
        \begin{equation}\label{eq:guc1}
            |h(a) - h(a')| > C 2^{-m}2^{-n/2},
        \end{equation}
        then there is a periodic tent map $T_{\hat b}$ with period at most $2^m n$ for which $h^{-1}(\log \hat b) \subset [a, a']$. 
    \end{lem}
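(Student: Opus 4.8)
The plan is to transfer the problem from the quadratic parameters $a, a' \in [a_m, a_{m+1}]$ to tent parameters via the semi-conjugacy, and then invoke the Brucks--Misiurewicz derivative estimates (our \rlem{BruMisLem}) in the renormalised coordinate. Set $b = \exp(h(a))$, $b' = \exp(h(a'))$; without loss of generality $b < b'$. Since $h(a), h(a') \leq \frac{\log 2}{2^m}$, both tent maps $T_b, T_{b'}$ are $m$-times Feigenbaum renormalisable, and after affine rescaling (as in the proof of \rlem{BruMisLem}) the renormalised maps are tent maps $T_{\hat b}, T_{\hat b'}$ with $\hat b = b^{2^m}$, $\hat b' = (b')^{2^m} \in (\sqrt 2, 2]$. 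The key quantity is $|\hat b - \hat b'|$: using $\log \hat b = 2^m \log b = 2^m h(a)$ and that $\log$ is bi-Lipschitz on $[\sqrt 2, 2]$, the hypothesis $|h(a) - h(a')| > C 2^{-m} 2^{-n/2}$ becomes, after multiplying by $2^m$, a lower bound $|\hat b - \hat b'| \gtrsim 2^{-n/2}$ (absorbing the constant $C$ appropriately).

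Next I would produce a periodic tent map in the interval $(\hat b, \hat b')$ with controlled period. The standard Benedicks--Carleson-type mechanism for tent maps (Brucks--Misiurewicz) gives: on any interval $V \subset (\sqrt 2, 2]$ on which $\Xi_n'$ exists and does not vanish, $|\Xi_n'| \mc \hat b_0^n \geq (\sqrt 2)^n = 2^{n/2}$; hence if $|\Xi_n(\hat b) - \Xi_n(\hat b')|$ were to stay of order $1$ — which it is, since $\Xi_n$ ranges over the interval $\hat I$ of definite size — the interval $(\hat b, \hat b')$ cannot be one on which $\Xi_j$ is zero-free for all $j < n$, once $2^{-n/2} \gtrsim$ the threshold from \rlem{BruMisLem}. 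So there is some $j \leq n$ and some $\hat\beta \in (\hat b, \hat b')$ with $\Xi_j(\hat\beta) = T_{\hat\beta}^j(1) = 0$; that is, $T_{\hat\beta}$ is a periodic tent map of period at most $j+1 \leq n+1$ (adjust the constant $C$ to make the bookkeeping clean so the period is $\leq n$). Un-renormalising, the tent map $T_{\hat b}$ with $\hat b^{1/2^m}$-th root — more precisely the tent map $T_{\tilde b}$ with $\tilde b^{2^m} = \hat\beta$ — is periodic with period at most $2^m n$, since renormalising $m$ times multiplies the period of the critical orbit by $2^m$, and $\log \tilde b \in (h(a), h(a'))$. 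Finally, because the semi-conjugacy sends a periodic tent map to a quadratic parameter with periodic (super-attracting) critical orbit and identical kneading data (as recorded in the discussion around~\eqref{equKnead}), $h^{-1}(\log\tilde b)$ is a single quadratic parameter lying in $[a,a']$; set $\hat b := \tilde b$.

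The main obstacle is the first transfer step: justifying that $|\hat b - \hat b'|$ being small forces a zero of some $\Xi_j$ strictly between them, with the period genuinely bounded by (a constant times) $n$ rather than by something like $2n$ or $n + \log(1/|\hat b - \hat b'|)$. This is exactly where \rlem{BruMisLem} does the work — the dichotomy is: \emph{either} $(\hat b, \hat b')$ is a zero-free interval for $\Xi_1, \ldots, \Xi_{n-1}$, in which case $|\Xi_n' | \mc (\sqrt 2)^n$ uniformly on it, forcing $|\hat b - \hat b'| \lesssim 2^{-n/2}|\hat I|$ and hence $|h(a)-h(a')| \lesssim 2^{-m} 2^{-n/2}$, contradicting~\eqref{eq:guc1} for suitable $C$; \emph{or} some $\Xi_j$, $j < n$, vanishes on $(\hat b, \hat b')$, giving the periodic tent map. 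The care needed is purely in chasing the constant $C$ (and the constant $N$ from \rlem{BruMisLem}, which only affects finitely many small $n$ and can be absorbed by enlarging $C$) through the rescalings, which is routine. A minor point worth stating explicitly in the write-up: the rescaling constant $a_b$ from the proof of \rlem{BruMisLem} is smooth and bounded away from $0$ and $\infty$ uniformly in $m$ by renormalisation theory (our bound~\eqref{equSullbound}), so none of the comparabilities degenerate as $m \to \infty$.
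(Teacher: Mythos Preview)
Your approach is essentially identical to the paper's: renormalise the tent parameters $b=\exp h(a)$, $b'=\exp h(a')$ to $\hat b = b^{2^m}, \hat b' = (b')^{2^m}\in[\sqrt2,2]$, use the Brucks--Misiurewicz derivative bound $|\Xi_n'|\gtrsim 2^{n/2}$ to force a zero of some $\Xi_j$ ($j<n$) in $(\hat b,\hat b')$ whenever $|\hat b-\hat b'|$ exceeds a constant times $2^{-n/2}$, and un-renormalise to get a periodic tent map of period at most $2^m n$ with slope strictly between $b$ and $b'$.

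One correction to your final step: $h^{-1}(\log\tilde b)$ is \emph{not} a single quadratic parameter. When $T_{\tilde b}$ is periodic, the fibre $h^{-1}(\log\tilde b)$ is a full hyperbolic window (a nondegenerate closed interval) containing a super-attracting parameter. Your appeal to the semi-conjugacy and kneading data only locates one point in that interval; what you actually need is that the \emph{entire} interval lies in $[a,a']$. This follows immediately from monotonicity of $h$ together with $\log\tilde b\in(h(a),h(a'))$ strictly --- exactly as the paper argues. Also, your closing remark about the phase-space rescaling constant $a_b$ and the bound~\eqref{equSullbound} is unnecessary: once you pass to $\hat b = b^{2^m}\in[\sqrt2,2]$ you are applying Brucks--Misiurewicz directly on that range, with no dependence on $m$ at all.
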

    \begin{proof}
        Recall that by definition \eqref{equDefXi}, $\Xi_n(b) = T^n_b(1)$, the $n$th iterate of the critical value for the tent map $T_b$. By~\cite{BruMis}, for some constant $\rho >0$ and all $b \geq \sqrt{2}$, 
        $$|\Xi'_n(b)| \geq \rho b^n$$
        wherever $\Xi_n$ is differentiable, and $\Xi_n$ is differentiable at $b$ unless $\Xi_j(b) = 0$ for some $j \leq n-1$. If $\Xi_j(b) = 0$, then $0$ is periodic of period $j+1$. 
        Let $b \geq \sqrt{2}.$
        Now 
        $$\Xi_n(b) \in [-1/(\sqrt{2} - 1), 1/(\sqrt{2} -1)],$$
        so if we look at the maximal parameter interval $(b,b')$ ($b' > b$) on which $\Xi_n$ is differentiable, 
        $$
        |b'-b| < \frac{2}{\sqrt{2}-1} b^{-n} \rho^{-1}.
        $$
        Noting $b,b' \geq \sqrt{2}$,
         there is a constant $C$ for which 
        $$
        |\log b'- \log b| < C 2^{-n/2}.
        $$
        Consequently, if one has $\sqrt{2} \leq b < b'$ and $\log b' - \log b > C2^{-n/2}$, there is a periodic tent map $T_{b_*}$ with period  $j \leq n$ and slope  $b_*$ lying strictly between $b$ and $b'$. 
         Then $T_{b_*^{2^{-m}}}$ has entropy $2^{-m}{\log b_*}$ and is periodic with period $2^mj$. 
        One immediately obtains the corresponding statement for renormalisable tent maps: If one has 
        $$2^{-m-1}\log 2 \leq \log b <\log  b' \leq 2^{-m}\log 2$$ and $\log b' - \log b > C2^{-m}2^{-n/2}$, there is a periodic tent map with period at most $2^m n$ and slope   in $(b,b')$. 

        Now with $m, a, a',n$ satisfying~\eqref{eq:guc1}, let $b, b'$ be the corresponding tent maps and $\hat b \in (b,b')$ a periodic tent map parameter with period at most $2^m n$. 
        Then $h^{-1}(\log \hat b) \subset [a,a']$, by monotonicity of entropy, completing the proof. 
    \end{proof}

 We will use the following simple but fruitful observation by Przytycki. If some critical value comes back too soon too close to the critical point $0$, then the map has an attracting cycle: 
\begin{lem}[{\cite{Prz}}] \label{lem:przyt}
    Let $C, \gamma\geq 1$ and let $g$ be a $C^1$ map with derivative satisfying  
    $|g'(x)| \leq \min(C|x|,\gamma)$.
    If $g$ does not have an attracting periodic orbit of period $n+1$, then 
    $$|g^{n+1}(0)| > 2^{-2}C^{-1} \ga^{-n}.$$
\end{lem}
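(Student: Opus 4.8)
The plan is to argue by contraposition. Suppose $|g^{n+1}(0)| \le 2^{-2}C^{-1}\gamma^{-n}$; I will produce an attracting periodic orbit of $g$ of period dividing $n+1$ whose immediate basin of attraction contains the critical point $0$, contradicting the hypothesis. Set $r := 2^{-1}C^{-1}\gamma^{-n}$ and $J := [-r,r]$, so that the hypothesis reads $|g^{n+1}(0)| \le r/2$.

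First, control the first iterate using $|g'(x)| \le C|x|$: integrating from $0$ gives $|g(x) - g(0)| \le \tfrac{C}{2}x^2 \le \tfrac{C}{2}r^2$ for $x \in J$, hence $g(J) \subset (g(0) - \tfrac{C}{2}r^2,\, g(0) + \tfrac{C}{2}r^2)$. For the remaining iterates I would use only $|g'| \le \gamma$, which multiplies lengths by at most $\gamma$ at each step, so that $g^{j}(J)$ is contained in the interval of radius $\gamma^{j-1}\tfrac{C}{2}r^2$ centred at $g^{j}(0)$, for $1 \le j \le n+1$. With the chosen $r$ one computes $\gamma^{n}\tfrac{C}{2}r^2 = \tfrac18 C^{-1}\gamma^{-n} = r/4$; since $|g^{n+1}(0)| \le r/2$, it follows that $g^{n+1}(J) \subset (-3r/4,\, 3r/4)$, so $g^{n+1}$ maps $J$ compactly into its own interior. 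Simultaneously, for $x \in J$ the chain rule gives $|(g^{n+1})'(x)| = |g'(x)| \prod_{j=1}^{n} |g'(g^{j}(x))| \le (Cr)\gamma^{n} = \tfrac12$, using $|g'(x)| \le C|x| \le Cr = \tfrac12 \gamma^{-n}$ for the first factor and $|g'| \le \gamma$ for the other $n$.

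Hence $g^{n+1}\colon J \to J$ is a self-map of a compact interval with Lipschitz constant at most $\tfrac12$; it has a unique fixed point $p$ in the interior of $J$, with $|(g^{n+1})'(p)| < 1$, so $p$ is attracting for $g^{n+1}$. Its $g$-orbit is then an attracting periodic orbit of $g$ of period dividing $n+1$, and since $g^{n+1}(J) \subset J$ with $J$ a connected neighbourhood of $0$, the critical point $0$ lies in the immediate basin of that orbit. This is the desired contradiction (for the applications it is in any case enough that $g$ have no attracting cycle at all), and the lemma follows.

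The argument is routine; the one point requiring care is the calibration of $r$. It must be small enough that $g^{n+1}(J) \subset J$ — which needs $|g^{n+1}(0)|$ together with the propagated radius $\gamma^{n}\tfrac{C}{2}r^2$ to stay below $r$ — yet large enough that the single small factor $|g'(x)| \le Cr$ on $J$ outweighs the $n$ factors bounded by $\gamma$. These two constraints force $r$ to be of order $C^{-1}\gamma^{-n}$ and, in turn, pin down the threshold $2^{-2}C^{-1}\gamma^{-n}$ for $|g^{n+1}(0)|$.
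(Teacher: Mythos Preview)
Your proof is correct and follows essentially the same approach as the paper: choose $r = 2^{-1}C^{-1}\gamma^{-n}$, bound $|(g^{n+1})'| \le \tfrac12$ on $[-r,r]$ via $|g'(x)| \le Cr$ for the first factor and $|g'| \le \gamma$ for the remaining $n$, and conclude by contraction that if $|g^{n+1}(0)| \le r/2$ then $g^{n+1}$ has an attracting fixed point in $[-r,r]$. The paper's version is slightly more streamlined in that it uses only the Lipschitz bound $|g^{n+1}(x)-g^{n+1}(0)| \le \tfrac12|x| \le r/2$ rather than your sharper integrated estimate $\tfrac{C}{2}r^2$ for the first step, but the ideas and the calibration of $r$ are identical; your remark that the resulting period only divides $n+1$ (harmless for the application) is apt.
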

    \begin{proof}
        Consider $r = 2^{-1} C^{-1}\ga^{-n}$.
        Then $|(g^{n+1})'| \leq 2^{-1}$ on $B(0,r)$.
        If $g^{n+1}$ does not have an attracting fixed point, $g^{n+1}(B(0,r)) \not\subset B(0,r)$ so $|g^{n+1}(0)| > r/2$, as required.  
    \end{proof}

    \begin{lem}\label{lemXiGan3m}
        Suppose 
        $h(a_0) < 2^{-m}\log 2$, so
        $f_{a_0}$ is $m$ times Feigenbaum renormalisable.
        Then 
        $$
        |\xi'_{2^m n -1}(a_0)| < \Ga^{n + 3m}.$$
    \end{lem}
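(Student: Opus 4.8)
The plan is to expand $\xi'_{2^m n-1}(a_0)$ via~\eqref{equXiDer} as a sum of derivatives of iterates of $f_{a_0}$ along the post-critical orbit, and to control each summand using the $2^m$-periodic structure the $m$ Feigenbaum renormalisations impose on that orbit. Write $J:=J_m$ for the restrictive interval of $f_{a_0}^{2^m}$ and $J^{(s)}:=f_{a_0}^s(J)$ for $s=0,1,\dots,2^m$: since $0\in J$ and $f_{a_0}^{2^m}(J)\subseteq J$, every iterate $f_{a_0}^{2^m k}(0)$ lies in $J$, and $f_{a_0}^{2^m k+s}(0)\in J^{(s)}$. As $f_{a_0}$ is $m$-times Feigenbaum renormalisable, the period-two renormalisations at levels $1,\dots,m$ exist, so the real a priori bounds of renormalisation theory control the geometry of $J$ and of the cycle $\{J^{(s)}\}$ uniformly in $m$ and in the admissible $a_0$. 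By~\eqref{equXiDer}, writing $N:=2^m n$,
$$
\xi'_{N-1}(a_0)=\sum_{j=0}^{N-1}(f_{a_0}^j)'\bigl(f_{a_0}^{N-j}(0)\bigr),
$$
and the $j$-th summand is the derivative of $f_{a_0}$ along the orbit piece $f_{a_0}^{N-j}(0),\dots,f_{a_0}^{N-1}(0)$, which terminates at the last site of the last $2^m$-block. I would cut this piece at block boundaries: it consists of $\beta:=\lfloor j/2^m\rfloor$ complete $2^m$-blocks preceded by an initial piece of length $r:=j\bmod 2^m$, which sits at positions $2^m-r,\dots,2^m-1$ of a block — hence inside $J^{(2^m-r)}$ and mapped by $f_{a_0}^r$ into $J$.

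For a complete block, the universal renormalisation bound~\eqref{equSullbound} translates — derivatives being invariant under the affine rescaling defining the renormalisation — into $|(f_{a_0}^{2^m})'|\le\Gamma$ on $J$, so $\beta$ consecutive complete blocks contribute a factor of absolute value at most $\Gamma^{\beta}$. For the initial piece, $f_{a_0}^{r}\colon J^{(2^m-r)}\to f_{a_0}^{2^m}(J)\subseteq J$ is a diffeomorphism avoiding the critical point, so by the real a priori bounds (Koebe-type distortion control along such orbit segments, together with the lower bound $|J^{(s)}|\gtrsim|J|^{2}$, the smallest interval of the cycle being the immediate fold image $J^{(1)}\asymp|J|^{2}$) its derivative at $f_{a_0}^{N-j}(0)$ is at most $P_m\asymp|J|^{-1}$; moreover $|J|^{-1}$ grows only exponentially in $m$, essentially like $\tau^{m}$ with $\tau$ the reciprocal Feigenbaum scaling constant ($\approx 2.5$), with all implied constants absolute.

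Combining, the $j$-th summand has absolute value $\le\Gamma^{\beta}$ when $2^m\mid j$ and $\le P_m\Gamma^{\beta}$ otherwise; grouping the $j$'s by $\beta\in\{0,\dots,n-1\}$ (each value occurring $2^m$ times) gives
$$
|\xi'_{2^m n-1}(a_0)|\le\bigl(1+2^mP_m\bigr)\sum_{\beta=0}^{n-1}\Gamma^{\beta}\le\frac{1+2^mP_m}{\Gamma-1}\,\Gamma^{n}\le C\,(2\tau)^{m}\,\Gamma^{n}
$$
for an absolute constant $C$, using $\Gamma\ge 4$. Since $\Gamma\ge 4$ forces $\Gamma^{3}\ge 64>2\tau$, we get $C(2\tau)^{m}\le\Gamma^{3m}$ for every $m$ beyond some absolute $m_0$, which is the claim; the finitely many remaining values of $m$ follow from the same chain of inequalities with the explicit bounded value of $|J_m|$ in place of the asymptotic $\tau^m$, the case $m=0$ reducing to $|\xi'_{n-1}(a_0)|\le\sum_{j<n}4^{j}<4^{n}\le\Gamma^{n}$ (no initial piece, and $|f_{a_0}'|\le 4\le\Gamma$ on $I_{a_0}$).

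The heart of the matter, and the step I expect to be the main obstacle, is the estimate on the initial piece: one must bound $|(f_{a_0}^{r})'|$ on $J^{(2^m-r)}$ by a quantity merely singly exponential in $m$ — not the doubly exponential $4^{2^m}$ that the trivial derivative estimate would give. This is precisely where renormalisation theory is used: one needs bounded distortion along the orbit segments $J^{(s)}\to J$ and the lower bound $\min_s|J^{(s)}|\gtrsim|J|^{2}$, uniformly in $m$ and in the admissible parameters, both resting on Sullivan's complex bounds (the same input as~\eqref{equSullbound}). Once that is in hand, the block bookkeeping and the verification that the constants fit inside $\Gamma^{n+3m}$ are routine.
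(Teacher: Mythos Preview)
Your block decomposition matches the paper's, but you diverge at the crucial step: bounding the partial-block ``initial piece''. You work only at the deepest level $J=J_m$ and rely on Koebe distortion together with a uniform lower bound $\min_s|J^{(s)}|\gtrsim|J|^{2}$ on the cycle intervals. That bound is clear for $s=1$ but not for general $s$: establishing it with constants independent of $m$ amounts to controlling derivatives of $f_{a_0}^{s-1}$ along arbitrary initial segments of the $2^m$-cycle, which is precisely the quantity you are trying to bound. You rightly flag this as the main obstacle, but the resolution you sketch (Koebe space plus Sullivan's complex bounds) is not self-evident and would take substantially more work than the lemma warrants; in particular it needs more than the single inequality~\eqref{equSullbound} that the paper has at its disposal.

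The paper avoids this entirely by exploiting the \emph{entire} tower $J_0\supset J_1\supset\cdots\supset J_m$ of restrictive intervals, not just the deepest one. The key observation is that whenever $2^k\mid p$ one has $f_{a_0}^{p}(0)\in J_k$, so a single application of $g_k=f_{a_0}^{2^k}|_{J_k}$ costs at most a factor $\Gamma$ by~\eqref{equSullbound}. Any orbit segment of length $\hat n$ starting at $f_{a_0}^{j+1}(0)$ then decomposes as a binary ladder: at most $m$ applications of the various $g_k$ to climb from $J_l$ (with $l$ the $2$-adic valuation of $j+1$) up to $J_m$, then $\lfloor\hat n/2^m\rfloor$ full level-$m$ blocks, then at most $m$ further applications to descend. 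This gives $|(f_{a_0}^{\hat n})'(f_{a_0}^{j}(a_0))|\le\Gamma^{\lfloor\hat n/2^m\rfloor+2m}$ using only~\eqref{equSullbound}, with no Koebe and no interval-length estimates. Summing the $2^m n$ terms yields $2^m\Gamma^{2m}\sum_{i<n}\Gamma^{i}<\Gamma^{n+3m}$ directly. So the hierarchy of renormalisations does the work that you tried to push onto geometric control of the single deepest cycle.
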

    \begin{proof}
        Let $g_k$ denote the $k$th (Feigenbaum) renormalisation of $f_a$, omitting the dependence on $a$. Denote by $J_k$ the corresponding restrictive interval containing $0$. 
       Thus 
$$g_k := \left.f_{a}^{2^k}\right._{|J_k}.$$
Denote by $g_a$ the $m$th renormalisation of $f_a$ for $a$ in a neighbourhood of $a_0$. 
It is necessary to compute bounds for the derivative of the $n$th critical value of $g_a$ with respect to $a$ in a neighborhood of $a_0$. 
Observe that whenever $2^k | n$, $f_a^{n-1}(a)=f_a^{n}(0) \in J_k$, thus
$$|(f_a^{2^k})'(f_a^n(0))| = |g_k'(f_a^n(0))| \leq \Ga.$$
Decomposing the orbit of $f_a^j(a)$ according to visits to  $$J_l,\ldots,J_{m-1},J_m,J_m,\ldots,J_m,J_{m-1},\ldots,J_{k+1},J_k,$$ we obtain 
$$\frac{\ln |(f_a^{\hat n})'(f_a^j(a))|}{\ln \Ga} \leq \left\lfloor \frac{\hat n}{2^m}  \right\rfloor + 2m,$$
where $\lfloor y \rfloor$ is the integer part of $y \in \R$.
If we plug these estimates into~\eqref{equXiDer}, for any $n \geq 1$ we obtain
\begin{equation}\label{eqn:paaa}
    |\pa_a g_a^{n-1}(g_a(0))| = |\xi_{ 2^m n - 1 }'(a)| \leq 2^m \Ga^{2m}\sum_{i=0}^{n-1 }  \Ga^i  < \Ga^{n + 3m}.
\end{equation}
This is the desired bound. 
\end{proof}

\begin{lem} \label{lem:univrho}
        There exists $\rho>0$ such that,
        if $$\log \hat b \in (2^{-m-1}\log 2, 2^{-m}\log 2)$$ and $T_{\hat b}$ is periodic with period $2^m n$, then the length of the interval $h^{-1}(\hat b)$ is at least $\exp(-(n+m)\rho).$
    \end{lem}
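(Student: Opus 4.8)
The plan is to produce an explicit open parameter interval around the super-attracting centre of the window $h^{-1}(\log\hat b)$, of length comparable to $\kappa^{m}\Gamma^{-2n-3m}$ for a universal $\kappa\in(0,1)$, on which $f_a$ is hyperbolic and hence $h\equiv\log\hat b$. Since $2n+3m\le 3(n+m)$, such a length is at least $\exp(-(n+m)\rho)$ for suitable $\rho$, which is the assertion.

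First I would fix the centre. Let $\hat a$ be the unique parameter with $K(f_{\hat a})=K(T_{\hat b})$; then $h(\hat a)=\log\hat b$ and $f_{\hat a}$ has a super-attracting periodic orbit of period $2^m n$, so in particular $\xi_{2^m n-1}(\hat a)=f_{\hat a}^{2^m n}(0)=0$. Since $h(\hat a)=\log\hat b<2^{-m}\log 2$, the map $f_{\hat a}$ is $m$ times Feigenbaum renormalisable, with renormalisation $g_{\hat a}:=f_{\hat a}^{2^m}|_{J_m}$ of entropy $2^m\log\hat b\in(\frac12\log 2,\log 2)$; hence $g_{\hat a}$ is non-renormalisable and has a super-attracting periodic orbit of period $n$. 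More generally, on $U:=(a_m,a_{m+1})$ one has $0<h(a)<2^{-m}\log 2$, so $f_a$ is $m$ times Feigenbaum renormalisable, the renormalisation $g_a:=f_a^{2^m}|_{J_m^a}$ is defined, and \rlem{XiGan3m} gives $|\xi_{2^m n-1}'(a)|\le\Gamma^{n+3m}$; as $\xi_{2^m n-1}(\hat a)=0$, the mean value theorem yields
$$|g_a^{n}(0)|=|\xi_{2^m n-1}(a)|\le\Gamma^{n+3m}|a-\hat a|\qquad\text{for }a\in U.$$

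Next I would invoke Przytycki's lemma. By the a priori bounds of renormalisation theory, $\|g_a'\|_\infty\le\Gamma$ (see \eqref{equSullbound}) and $|J_m^a|\gtrsim\kappa^{m}$ for a universal $\kappa\in(0,1)$, so $|g_a'(x)|\le\min(C_0\kappa^{-m}|x|,\Gamma)$ for $x$ near $0$, with $C_0$ universal. Applying \rlem{:przyt} to $g_a$, with $n-1$ in the role of its $n$: if $g_a$ has no attracting periodic orbit of period $n$, then $|g_a^{n}(0)|>c_1\kappa^{m}\Gamma^{-n}$, $c_1>0$ universal. Setting $\delta:=c_1\kappa^{m}\Gamma^{-2n-3m}$, every $a\in W:=(\hat a-\delta,\hat a+\delta)\cap U$ satisfies $|g_a^{n}(0)|<c_1\kappa^{m}\Gamma^{-n}$, so $g_a$, and therefore $f_a$, has an attracting periodic orbit; thus $f_a$ is hyperbolic and $h$ is locally constant at $a$. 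A function locally constant at every point of the connected set $W$ is constant on $W$, so $h\equiv\log\hat b$ on $W$, i.e.\ $W\subseteq h^{-1}(\log\hat b)$. If $W$ were a proper subset of $(\hat a-\delta,\hat a+\delta)$ it would be truncated at $a_m$ or at $a_{m+1}$, whence $h\equiv\log\hat b$ on a one-sided neighbourhood of that endpoint, forcing by continuity $h(a_m)=\log\hat b$ (or $h(a_{m+1})=\log\hat b$), contradicting $h(a_m)=2^{-m}\log 2$ (or $h(a_{m+1})=2^{-m-1}\log 2$). Hence $W=(\hat a-\delta,\hat a+\delta)$ and $|h^{-1}(\log\hat b)|\ge 2\delta$, which proves the lemma with $\rho:=\log(1/\kappa)+3\log\Gamma+|\log(2c_1)|$ (increased slightly to cover small $n,m$).

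I expect the only non-formal point to be the bound $|g_a'(x)|\le C_0\kappa^{-m}|x|$ with $C_0$ uniform in $m$: this requires feeding in Sullivan's (Feigenbaum--Coullet--Tresser) renormalisation a priori bounds — the $C^2$-precompactness of the rescaled Feigenbaum renormalisations together with the universal scaling $|J_{m+1}^a|/|J_m^a|\to\kappa_\infty\in(0,1)$ — so that the quadratic-tip constant of $g_a$ grows only exponentially in $m$. Everything else is bookkeeping; in particular the last contradiction argument removes, at no cost, any need to control how close $\hat a$ sits to the ends of the window $U$.
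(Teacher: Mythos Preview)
Your argument is correct and relies on the same three ingredients as the paper's proof: the derivative bound $|\xi'_{2^mn-1}|\le\Gamma^{n+3m}$ from \rlem{XiGan3m}, Przytycki's \rlem{:przyt}, and Sullivan's a priori bounds giving $|J_m^a|\gtrsim\kappa^m$. The only difference is organisational. The paper selects two specific parameters in the window $h^{-1}(\log\hat b)$---the Chebyshev endpoint $a_C$ (where the renormalisation has no attracting orbit, so Przytycki gives $|g^n(0)|\gtrsim\kappa^m\Gamma^{-n}$) and the super-attracting centre $a'$ (where $g^n(0)=0$)---and applies the mean value theorem to bound $|a'-a_C|$ from below. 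You instead apply Przytycki in the contrapositive to show that an explicit ball around the super-attracting centre consists entirely of hyperbolic parameters, and then argue by connectedness that this ball lies in the plateau. Both routes yield the same exponential bound up to the constant in $\rho$; your endpoint contradiction neatly avoids having to identify $a_C$ explicitly, at the cost of a factor $\Gamma^{-n}$ in the estimate.
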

    \begin{proof}
        Each element of $h^{-1}(\hat b)$ is a quadratic parameter which is $m$ times Feigenbaum renormalisable and then once renormalisable of period $n$. The left endpoint $a_C$ has a final renormalisation which is conjugate to Chebyshev; $h^{-1}(\hat b)$ contains a super-attracting parameter $a'$ of period $2^m n$, so $\xi_{2^m n -1}(a') = 0$. Let $k := 2^m n -1$. We will estimate $\xi_{k}(a_C)$, and $\xi'_{k}$ on $(a_C,a')$, giving a lower bound on $a'-a_C$. 

        If we set $J_0:=I_{a_C}$ (defined on page \pageref{pageIa}) and denote by  $$J_1 \supset J_2 \supset \cdots\supset J_m$$ the first $m$ restrictive intervals for $f_{a_C}$, it follows from \cite[Theorem~1]{Sul} that for some universal $\kappa>0$, $$|J_{k+1}|/|J_k| \geq e^{-\kappa}.$$
        In particular, the restrictive interval $J_m$ for the $m$th renormalisation $g = f_{a_C}^{2^m}$ has length $\delta  \geq \exp(-m \kappa)$. 
        Again by \cite[Theorem~1]{Sul}, 
        		$$|g'(x)| < Cx/\delta \leq C,$$
	for some universal constant $C$. 
        Since $g$ renormalises into Chebyshev, it does not have an attracting orbit. Applying Lemma~\ref{lem:przyt}, 
        $$|g^n(0)| > 2^{-2} \delta C^{-1} C^{-n+1} > \exp(-m\kappa - n\rho_1)$$
        for some universal $\rho_1>0$. 
        Reformulating, 
        $$
        |\xi_k(a_C)| > \exp(-m\kappa -n\rho_1).$$
        Meanwhile, from \rlem{XiGan3m},  $|\xi'_k| < \Ga^{n+3m}$. 
        Thus 
        $$|a'-a_C| > \exp(-m(\kappa +3\log \Ga) -n(\rho_1 + \log \Ga)).$$ 
    \end{proof}
    \begin{prop}
        There exist
 $C_1, \alpha_1$ for which, on each interval $[a_m, a_{m+1}]$, $h$ is $(C_1, \alpha_1)$-H\"older. 
 \end{prop}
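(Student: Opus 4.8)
The plan is to play Guckenheimer's lemma \cite[Lemma~3]{Guc} (which converts a large entropy gap into a renormalisation window trapped inside the corresponding parameter gap) against Lemma~\ref{lem:univrho} (which bounds such a window from below). Fix $m\geq 0$; let $\rho$ be the constant of Lemma~\ref{lem:univrho} and $C$ the constant of \cite[Lemma~3]{Guc}, and set $\alpha_1:=\frac{\log 2}{2\rho}$, a quantity independent of $m$. Take $a,a'\in[a_m,a_{m+1}]$ with $a\neq a'$; monotonicity of $h$ already gives $|h(a)-h(a')|\leq h(a_m)-h(a_{m+1})=2^{-(m+1)}\log 2$, which together with the two lemmas is all we shall use.

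I would split according to the size of $|a-a'|$. In the easy regime $|a-a'|\geq\exp(-(m+1)\rho)$ one has $|a-a'|^{\alpha_1}\geq\exp(-(m+1)\rho\alpha_1)=2^{-(m+1)/2}\geq 2^{-(m+1)}$, so $|h(a)-h(a')|\leq(\log 2)\,|a-a'|^{\alpha_1}$ immediately.

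The main regime is $|a-a'|<\exp(-(m+1)\rho)$. Here I would let $n\geq 1$ be the largest integer with $|a-a'|<\exp(-(n+m)\rho)$ — it exists and is at least $1$ by the case hypothesis — so that also $|a-a'|\geq\exp(-(n+1+m)\rho)$. Now either $|h(a)-h(a')|\leq C\,2^{-m}2^{-n/2}$, or $|h(a)-h(a')|>C\,2^{-m}2^{-n/2}$ and \cite[Lemma~3]{Guc} supplies a periodic tent map $T_{\hat b}$ of period at most $2^m n$ with $h^{-1}(\log\hat b)\subset[a,a']$. Since $\hat b$ is periodic (hence not one of the preperiodic Chebyshev endpoints $a_m,a_{m+1}$), $\log\hat b$ lies in the open interval $(2^{-(m+1)}\log 2,\,2^{-m}\log 2)$ and $T_{\hat b}$ is $m$-times Feigenbaum renormalisable, so its period equals $2^m n_1$ for some $n_1\leq n$; Lemma~\ref{lem:univrho} then gives $|h^{-1}(\log\hat b)|\geq\exp(-(n_1+m)\rho)\geq\exp(-(n+m)\rho)>|a-a'|$, contradicting $h^{-1}(\log\hat b)\subset[a,a']$. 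Hence $|h(a)-h(a')|\leq C\,2^{-m}2^{-n/2}$. Finally, from $n\geq\frac{-\log|a-a'|}{\rho}-m-1$ one obtains $2^{-n/2}\leq 2^{(m+1)/2}|a-a'|^{\log 2/(2\rho)}=2^{(m+1)/2}|a-a'|^{\alpha_1}$, so $|h(a)-h(a')|\leq C\,2^{(1-m)/2}|a-a'|^{\alpha_1}\leq\sqrt{2}\,C\,|a-a'|^{\alpha_1}$. Taking $C_1:=\max(\log 2,\sqrt{2}\,C)$ and $\alpha_1=\frac{\log 2}{2\rho}$, both independent of $m$, proves the claim on every $[a_m,a_{m+1}]$.

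I do not expect a genuine obstacle, since the two nontrivial inputs (Guckenheimer's lemma and Lemma~\ref{lem:univrho}) are already in place. The closest thing to a subtlety is the bookkeeping of the $m$-dependence — checking that the stray powers $2^{-m}$ and $2^{(m+1)/2}$ collapse into the $m$-free constant $\sqrt{2}$ — and the minor point that the periodic tent map furnished by \cite[Lemma~3]{Guc} has its slope strictly inside the open range required by Lemma~\ref{lem:univrho}, which holds precisely because a periodic tent map has a periodic, not merely preperiodic, turning point.
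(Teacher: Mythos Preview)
Your proof is correct and follows essentially the same approach as the paper: play Guckenheimer's lemma against Lemma~\ref{lem:univrho}, with the same exponent $\alpha_1=\frac{\log 2}{2\rho}$. The only cosmetic difference is the direction in which you pick $n$: the paper takes $n$ \emph{minimal} so that the entropy gap exceeds $C\,2^{-m}2^{-n/2}$ and then reads off a lower bound on $|a-a'|$ from Lemma~\ref{lem:univrho}, whereas you take $n$ \emph{maximal} so that $|a-a'|<\exp(-(n+m)\rho)$ and then use Lemma~\ref{lem:univrho} contrapositively to force the entropy bound. Your handling of the edge cases (large $|a-a'|$, and the period being $2^m n_1$ with $n_1\le n$) is if anything slightly more explicit than the paper's.
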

 \begin{proof}
     Let $a,a' \in [a_m,a_{m+1}]$ have different entropies. Take $n$ minimal for which~\eqref{eq:guc1} holds, to obtain a periodic tent map $T_{\hat b}$ of period at most $2^m n$ and with $h^{-1}(\hat b) \subset [a,a']$. 
     By Lemma~\ref{lem:univrho}, $|a'-a| \geq \exp(-(n+m)\rho)$. 
     Since $n$ is minimal, 
     $$|h(a') - h(a)| < C_2 2^{-m}2^{-(n-1)/2}= \sqrt{2} C_2 \exp\left(-\left(m + \frac{n}2\right)\log 2\right).$$
     Taking $\alpha_1 = \frac{\log 2}{2\rho}$, 
     $$
     |h(a') - h(a)| < \sqrt{2}C_2 |a'-a|^{\alpha_1}.$$
 \end{proof}

 This completes the proof of \rthm{UniHol}.

\bibliography{references}{}
\bibliographystyle{plain}

\end{document}